\newtheorem{theorem}{Theorem}[section]  %This keeps track of the section in the naming of the theorems
\newtheorem{lemma}[theorem]{Lemma}  %This creates a lemma environment and numbers it along with the theorems
\newtheorem{proposition}[theorem]{Proposition}
\newtheorem{example}[theorem]{Example}
\newtheorem{remark}[theorem]{Remark}
\newtheorem*{qconjecture}{Quilted Atiyah-Floer Conjecture} %This creates a theorem environment that does no label it along with the other theorems.
\newtheorem*{qcconjecture}{Quilted Atiyah-Floer Conjecture (Chain Level Version)} %This creates a theorem environment that does no label it along with the other theorems.
\newcommand{\End}{\mathrm{End}}
\newcommand{\sumdd}[2]{\displaystyle \sum_{#1}^{#2}}
\newcommand{\sumd}[1]{\displaystyle \sum_{#1}}
\newcommand{\limd}[1]{\displaystyle \lim_{#1}}
\newcommand{\intd}[1]{\displaystyle \int_{#1}}
\newcommand{\intdd}[2]{\displaystyle \int_{#1}^{#2}}
\newcommand{\fracd}[2]{\displaystyle \frac{#1}{#2}}
\newcommand{\bb}[1]{\mathbb{#1}}
\newcommand{\defeq}{\mathrel{\mathpalette{\vcenter{\hbox{$:$}}}=}}
\newcommand{\SU}{\mathrm{SU}}
\newcommand{\SO}{\mathrm{SO}}
\newcommand{\U}{\mathrm{U}}
\newcommand{\GL}{\mathrm{GL}}
\newcommand{\PU}{\mathrm{PU}}
\newcommand{\fl}{{\small \mathrm{flat}}}
\newcommand{\ba}{\mathrm{basic}}
\newcommand{\A}{{\mathcal{A}}}
\newcommand{\G}{{\mathcal{G}}}
\newcommand{\M}{{{M}}}
\newcommand{\Mtwo}{{\mathcal{M}}}
\newcommand{\La}{{{L}}}
\newcommand{\C}{{\mathcal{C}}}
\newcommand{\K}{{\mathcal{K}}}
\newcommand{\inst}{\mathrm{inst}}
\newcommand{\quilt}{\mathrm{symp}}
\newcommand{\CS}{{\mathcal{CS}}}
\newcommand{\YM}{{\mathcal{YM}}}
\newcommand{\proj}{{\mathrm{proj}}}
\newcommand{\eps}{\epsilon}
\newcommand{\Hom}{{\mathrm{Hom}}}
\newcommand{\D}{{\mathcal{D}}}
\newcommand{\afV}{V}
\newcommand{\afv}{v}
\newcommand{\afR}{R}
\newcommand{\afr}{r}
\newcommand{\afA}{A}
\newcommand{\afB}{B}
\newcommand{\afa}{a}
\newcommand{\afb}{b}
\newcommand{\afp}{p}
\newcommand{\afalpha}{\alpha}
\newcommand{\afphi}{\phi}
\newcommand{\afpsi}{\psi}
\newcommand{\afmu}{\mu}
\newcommand{\afnu}{\nu}
\newcommand{\afU}{U}
\newcommand{\afu}{u}
\title{Higher-rank instanton cohomology and \\ the quilted Atiyah-Floer conjecture}
\author{David L. Duncan}
\date{}
\begin{document}

\maketitle

\begin{abstract}
	Given a closed, connected, oriented 3-manifold with positive first Betti number, one can define an instanton Floer group as well as a quilted Lagrangian Floer group. The quilted Atiyah-Floer conjecture states that these cohomology groups are isomorphic. We initiate a program for proving this conjecture.
\end{abstract}

\tableofcontents

\section{Introduction}

This paper is the first in a series \cite{DunIndex} \cite{DunComp} that prove various aspects of the quilted Atiyah-Floer conjecture. The present paper is dedicated entirely to the underlying Floer theory and differential geometry of the project.

Section \ref{OverviewOfTheQuiltedAtiyahFloerConjecture} begins with a brief history and overview of the conjecture. We also introduce our notation and conventions. The section ends with a description of the quilted Floer group associated to a 3-manifold. This group was first introduced by Wehrheim and Woodward in \cite{WWfloer}, and is defined via a 2+1 field theoretic scheme that relates symplectic and gauge theoretic 3-manifold invariants. We point out that in \cite{WWfloer} the authors define their quilted invariants for 3-manifolds via gauge theory on suitable $\PU(r)$-bundles (one usually considers the case $\PU(2) = \SO(3)$). We adopt their approach by working with the higher rank $\PU(r)$-bundles throughout this paper.

In Section \ref{InstantonFloerCohomology} we discuss Floer's instanton homology \cite{Fl1} \cite{Flinst}. We note that, as in the symplectic theory, the standard references (e.g., Donaldson's book \cite{Donfloer}, as well as Floer's original papers) focus almost entirely on the Lie groups $\SU(2)$ and $\SO(3)$. However, it is possible to define instanton homology for other Lie groups as well. To maintain the analogy with the Woodward-Wehrheim theory, we focus attention on $\PU(r)$ for $r \geq 2$. A full treatment of higher-rank instanton Floer theory does not seem to appear in the literature, so we have spent some time developing the theory in this case, proving several folklore results that are well-known in the case $r = 2$. We note also that Kronheimer has defined higher-rank Donaldson invariants \cite{Kron}. These can be viewed as the 4-dimensional version of the higher-rank instanton Floer theory discussed here.

In Section \ref{TheQuiltedAtiyahFloerConjecture} we make a precise statement of the quilted Atiyah-Floer conjecture, as well as a chain level version of the conjecture. We then describe our overall approach, and prove several chain level statements. This includes a detailed discussion of how to obtain transversality \emph{simultaneously} in both Floer theories.

\medskip

\noindent {\bfseries Acknowledgments:} The author is grateful to his thesis advisor Chris Woodward for his insight and valuable suggestions. In addition, the author benefited greatly from discussions with Katrin Wehrheim, via Chris Woodward, of her unpublished work on the Atiyah-Floer conjecture. This unpublished work analyzes various bubbles that appear in the limit of instantons with Lagrangian boundary conditions with degenerating metrics, and it outlines the remaining problems in such an approach. The present paper takes a different approach that avoids instantons with Lagrangian boundary conditions.

The author would also like to thank Penny Smith, Dietmar Salamon, Fabian Ziltener, Peter Ozsv\'{a}th, and Tom Parker for enlightening discussions. Finally, thanks are certainly due to the author's wife Claudette D'Souza for her help with early versions of the paper, and for her support along the way. This work was partially supported by NSF grants DMS 0904358 and DMS 1207194.

\section{Background on the quilted Atiyah-Floer conjecture}\label{OverviewOfTheQuiltedAtiyahFloerConjecture}

Let $Y$ be a closed, connected, oriented 3-manifold, and fix an $\SO(3)$-bundle $Q \rightarrow Y$ such that $w_2(Q)$ pairs non-trivially with an element of $H_2(Y, \bb{Z})$ (so $Y$ is necessarily \emph{not} a homology 3-sphere). In \cite{Flinst} \cite{Fl1}, Floer showed that this data defines a chain complex $(CF_\inst(Q), \partial_\inst)$ where $CF_\inst(Q)$ is the abelian group freely generated by the gauge equivalence classes of flat connections on $Q$. (Strictly speaking, one needs to suitably perturb the defining equations in order to obtain $\partial_\inst^2 = 0$.) Then the associated homology group $HF_\inst(Q)$, called \emph{instanton Floer cohomology}, is well-defined and depends only on $Q \rightarrow Y$.

In \cite{DS} \cite{DS2} \cite{DS3} \cite{DSerr}, Dostoglou and Salamon considered the following special case. Let $\Sigma$ be a closed, connected, oriented surface, $P \rightarrow \Sigma$ a non-trivial $\SO(3)$-bundle, and $\Phi: P \rightarrow P$ a bundle map covering an orientation-preserving diffeomorphism $\varphi: \Sigma \rightarrow \Sigma$. Then Dostoglou-Salamon applied Floer's construction to the mapping tori $Y = (I \times \Sigma) / \varphi$ and $Q = (I \times P)  / \Phi$. This is particularly interesting due to the following symplectic interpretation: Let $\M(\Sigma)$ denote the space of flat connections on $P$ modulo the action of the identity component of the gauge group. This is naturally a smooth finite-dimensional symplectic manifold, and pullback by $\Phi$ induces a symplectomorphism $\Phi^*: \M(\Sigma) \rightarrow \M(\Sigma)$. Then the graph $\mathrm{Gr}(\Phi^*) \subset \M(\Sigma)^- \times \M(\Sigma)$ is a Lagrangian submanifold, where the superscript in $\M(\Sigma)^- $ means we have replaced the symplectic form by its negative. Similarly, the diagonal $\Delta \subset \M(\Sigma)^- \times \M(\Sigma)$ is also Lagrangian, and one can use this data to define a \emph{Lagrangian Floer group} $HF_\quilt(\mathrm{Gr}(\Phi^*) , \Delta )$. This can be thought of as the homology of a chain complex $\left(CF_\quilt(\mathrm{Gr}(\Phi^*) , \Delta ), \partial_\quilt\right)$ generated by the intersection points $\mathrm{Gr}(\Phi^*) \cap \Delta$. On the other hand, the elements of $\mathrm{Gr}(\Phi^*) \cap \Delta$ can be naturally identified with the gauge equivalence classes of flat connections on $Q$. That is, $HF_\quilt(\mathrm{Gr}(\Phi^*), \Delta)$ and $HF_\inst(Q) $ are both well-defined homology groups that measure the flat connections on $Q$. Following an outline suggested by Atiyah and Floer, Dostoglou and Salamon were able to show that these Floer groups are naturally isomorphic 

$$HF_\inst(Q) \cong HF_\quilt(\mathrm{Gr}(\Phi^*), \Delta).$$ 
They prove this by demonstrating a chain-level isomorphism in the spirit of Atiyah's idea to `stretch the neck' \cite{A}.

\medskip

The problem we consider in the present paper is a generalization of the Dostoglou-Salamon set-up to 3-manifolds $Y$ with $b_1(Y) > 0$. More explicitly, we assume $Y$ is a closed, connected, oriented 3-manifold equipped with a Morse function $f: Y \rightarrow S^1$ that is not homotopically trivial. We assume further that $f$ has connected fibers. Such functions were considered by Lekili \cite{Lek1}; following him we call these {\bfseries broken circle fibrations}. It follows from work of Gay and Kirby \cite{GK} that $Y$ admits a broken circle fibration if and only if $Y$ has positive first Betti number. Moreover, each critical point of $f$ has index 1 or 2 (the index is defined with respect to the canonical orientation on the circle). The invariants in which we are interested will only depend on the homotopy class of $f$. In particular, by homotoping $f$ we can assume the critical points of $f$ have distinct critical values, and we let $N$ denote the number of critical points. If $N = 0$, then by performing a suitable homotopy we can replace $f$ by a Morse function with two critical points; this is the Morse-theoretic version of destabilization. We may therefore assume $N > 0$. 

Following \cite{WWfloer} and \cite{GWW}, we will use $f$ to decompose $Y$ into a union of elementary cobordisms (a so-called \emph{Cerf decomposition}): Identify $S^1 \cong \mathbb{R} / C \mathbb{Z}$ for some $C > 0$. Find regular values $r_i \in S^1$ such that, for each $0 \leq i \leq N-1$, there is exactly one critical value $c_{i(i+1)}$ with $r_i + \delta < c_{i(i+1)} < r_{i+1} - \delta$, for some fixed $\delta > 0$. Here and below we work with $i$ modulo $N$. We may assume the circumference $C$ is large enough to take $\delta = 1/2$. Define

$$\Sigma_i \defeq f^{-1}(r_i- 1/2), \indent Y_{i(i+1)}\defeq f^{-1}\left(\left[r_i+1/2, r_{i+1}-1/2\right] \right),$$
which are compact, connected, oriented surfaces and cobordisms, respectively. 

	\begin{figure}[h]
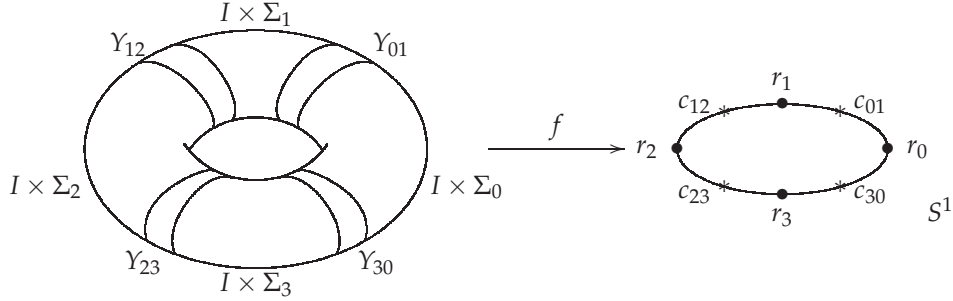
\label{figure6}
	\centerline{\xy 
		%%%%%%%%%%%%Left 3-manifold
		(-40,10)*={\xy
		/r.15pc/:,
	 (0,0)*\ellipse(36,25){-};
	 (-15,1)*{}="t1";
(15,1)*{}="t2";
"t1";"t2" **\crv{(-8, -9)& (8, -9)}; %bottom part of hole
 (-14,-.2)*{}="t1";
(14,-.2)*{}="t2";
"t1";"t2" **\crv{(-8, 9)& (8, 9)}; %top part of hole
%%%%%%%%%Top \Sigma_1
(9,4.5)*{}="At1";
(24.4,18.2)*{}="Bt1";
"At1";"Bt1" **\crv{(7.5,9)& (18,20)};  %Top part of top \Sigma_1
%"At1";"Bt1" **\crv{~**\dir{.} (11,2.5)& (27,9)};  %Bottom part of bottom \Sigma_1
%%%%%%%%%Top right \Sigma_2
(5,6)*{}="At2";
(17,22)*{}="Bt2";
"At2";"Bt2" **\crv{(3,9)& (9,20)& (14,22)};  %Top part of right top \Sigma_2
%"At2";"Bt2" **\crv{~**\dir{.} (11,4)& (20,16)};  %Bottom part of right bottom \Sigma_2
%%%%%%%%%Top \Sigma_3
(-9,4.5)*{}="At3";
(-24.4,18.2)*{}="Bt3";
"At3";"Bt3" **\crv{(-7.5,9)& (-18,20)};  %Top part of top \Sigma_3
%"At3";"Bt3" **\crv{~**\dir{.} (-11,2.5)& (-27,9)};  %Bottom part of top \Sigma_3
%%%%%%%%%Top left \Sigma_2
(-5,6)*{}="Atl2";
(-17,22)*{}="Btl2";
"Atl2";"Btl2" **\crv{(-3,9)& (-9,20)& (-14,22)};  %Top part of top left \Sigma_2
%"Atl2";"Btl2" **\crv{~**\dir{.} (-11,4)& (-20,16)};  %Bottom part of top left \Sigma_2
%%%%%%%%%Bottom \Sigma_1
(9,-4.5)*{}="Ab1";
(23,-19)*{}="Bb1";
"Ab1";"Bb1" **\crv{(10,-3.5)& (25.5,-9)};  %Top part of bottom \Sigma_1
%"Ab1";"Bb1" **\crv{~**\dir{.} (7.5,-9)& (18,-20)};  %Bottom part of bottom \Sigma_1
%%%%%%%%%Bottom right \Sigma_4
(5,-6)*{}="Ab2";
(17,-22)*{}="Bb2";
%"Ab2";"Bb2" **\crv{~**\dir{.} (3,-9)& (9,-20)& (14,-22)};  %Bottom part of bottom \Sigma_2
"Ab2";"Bb2" **\crv{(11,-4)& (20,-16)};  %Top part of bottom \Sigma_2
%%%%%%%%%Bottom left \Sigma_4
(-9,-4.5)*{}="Ab14";
(-23,-19)*{}="Bb14";
"Ab14";"Bb14" **\crv{(-10,-3.5)& (-25.5,-9)};  %Top part of bottom \Sigma_1
%"Ab14";"Bb14" **\crv{~**\dir{.} (-7.5,-9)& (-18,-20)};  %Bottom part of bottom \Sigma_1
%%%%%%%%%Bottom  \Sigma_3
(-5,-6)*{}="Ab3";
(-17,-22)*{}="Bb3";
%"Ab3";"Bb3" **\crv{~**\dir{.} (-3,-9)& (-9,-20)& (-14,-22)};  %Bottom part of bottom \Sigma_2
"Ab3";"Bb3" **\crv{(-11,-4)& (-20,-16)};  %Top part of bottom \Sigma_2
	 \endxy};
%%%%%%%%%Circle and labels; 
(15,5)*\ellipse(14,6){-};
(-12,5)*{I \times \Sigma_0};
(-40,28)*{I \times \Sigma_1};
(-68,5)*{I \times \Sigma_2};
(-40,-8)*{I \times \Sigma_3};
(-22,24)*{Y_{01}};
(-57,24)*{Y_{12}};
(-55,-5)*{Y_{23}};
(-24,-5)*{Y_{30}};
%%%%%%%%%%%%%%%
(44,10)*{\bullet};
(48,10)*{r_0};
(30,16)*{\bullet};
(30,19)*{r_1};
(16,10)*{\bullet};
(12,10)*{r_2};
(30,4)*{\bullet};
(30,1)*{r_3};
%%%%%%%%%%%%%%
(37.7,15)*{*};
(41.7,16)*{c_{01}};
(22.3,15)*{*};
(18.3,16)*{c_{12}};
(22.3,5)*{*};
(18.3,4)*{c_{23}};
(37.7,5)*{*};
(41.7,4)*{c_{30}};
%%%%%%%%%%%%%%%
(51,2)*{S^1};
(-9,10)*{}="c";
(9,10)*{}="e";
{\ar@{->} "c";"e"};
(0,13)*{f};
	 \endxy}
	 \caption{An illustration of a broken circle fibration.}
	 \end{figure}

Fix a metric $g$ on $Y$. We refer to $g$, or its restriction to any submanifold of $Y$, as the {\bfseries fixed metric}. Note that there are no critical values between $r_i-1/2$ and $r_{i}+1/2$, so the normalized gradient vector field $V \defeq \nabla f / \left| \nabla f\right|$ is well-defined on $f^{-1}(\left[r_i-1/2,r_{i}+1/2\right])$. The time-1 gradient flow of $V$ provides an identification 

\begin{equation}
f^{-1}(\left[r_{i}-1/2, r_{i}+1/2\right]) \cong I \times \Sigma_i,
\label{id1}
\end{equation}
where we have set $I \defeq \left[0, 1\right]$. This also provides an identification of $f^{-1}(t)$ with $\Sigma_i$ for $t \in \left[r_i-1/2,r_{i}+1/2\right]$. So the function $f$ together with the metric $g$ allow us to view $Y$ as a composition of cobordisms:

\begin{equation}\label{decopmofYbullet}
Y_{01} \cup_{\Sigma_1} (I \times \Sigma_1) \cup_{\Sigma_1} Y_{12} \cup_{\Sigma_2} \ldots \cup_{\Sigma_{N-1}}  Y_{(N-1)0} \cup_{\Sigma_0} (I \times \Sigma_0) \cup_{\Sigma_0}
\end{equation}
Note that this is \emph{cyclic} in the sense that the cobordism $I \times \Sigma_0$ on the right is glued to the cobordism $Y_{01}$ on the left, reflecting the fact that $f$ maps to the circle. By construction, each $Y_{i(i+1)}$ is an {\bfseries{elementary cobordism}}; that is, it admits an $I$-valued Morse function that (i) preserves the cobordism structure between $Y_{i(i+1)}$ and $I$, and (ii) has at most one critical point (necessarily in the interior of $Y_{i(i+1)}$). Since each critical point has index either 1 or 2, it follows from standard Morse theory considerations \cite[Theorem 3.14]{Mil} that the genus of $\Sigma_i$ differs from that of $\Sigma_{i+1}$ by one. In particular, the number of critical points $N$ is \emph{even}.

It is possible to choose an $\SO(3)$-bundle $Q \rightarrow Y$ so its restriction to each $\Sigma_i$ is non-trivial. Then $HF_\inst(Q)$ is well-defined. On the symplectic side, define $\M(\Sigma_i)$ for each $\Sigma_i$ as above. This choice of $Q$ ensures that each $\M(\Sigma_i)$ is smooth. Restricting to each of the two boundary components of the $Y_{i(i+1)}$ defines two smooth Lagrangian submanifolds $L^{(0)}, L^{(1)} \subset \M(\Sigma_0) \times \ldots \times \M(\Sigma_{N-1})$ for which $HF_\quilt(L^{(0)}, L^{(1)})$ is well-defined; see Section \ref{QuiltedFloerCohomologyForBrokenCircleFibrations}. Then the {\bfseries quilted Atiyah-Floer conjecture} states that there is a natural isomorphism $HF_\inst(Q) \cong HF_\quilt (L^{(0)}, L^{(1)})$.

It follows from the definitions that there is a natural isomorphism of abelian groups 
 
 \begin{equation}\label{Overviewisoofgenerators}
 CF_\inst(Q) \cong CF_\quilt(L^{(0)}, L^{(1)}),
 \end{equation}
 where these are (suitably defined) chain groups inducing the Floer homology groups $HF_\inst(Q)$, $HF_\quilt (L^{(0)}, L^{(1)})$; see Theorem \ref{generators} below. Moreover, these chain groups each admit natural relative gradings. In \cite{DunIndex} it is shown that the isomorphism (\ref{Overviewisoofgenerators}) preserves these gradings. The quilted Atiyah-Floer conjecture would follow if one could show that this isomorphism intertwines the boundary operators. One way to show this is to prove that there is a one-to-one correspondence between instantons and holomorphic curves, at least for suitably chosen data (metric, almost complex structure and perturbations). The main result of \cite{DunComp} implies that, when this data is chosen well, every instanton trajectory counted by the boundary operator $\partial_\inst$ is close to some holomorphic curve trajectory counted by the boundary operator $\partial_\quilt$. (Proving the converse statement, and hence the conjecture, is still a work in progress.) In the present paper we describe how the aforementioned data can be chosen so that both chain level groups are comparable.

\begin{remark}
(a) Fukaya \cite{Fuk} \cite{Fuk2} describes a program similar to the one presented here. Though quite similar, there is one striking difference between his approach and ours: Fukaya deals with a fixed smooth (but not definite) metric, whereas we deal with a sequence of metrics that degenerate to a singular metric, and these metrics \emph{are not smooth}. 

\medskip

(b) Recently M. Lipyanskiy has developed compactness results for quilts with patches consisting both of instantons and holomorphic curves. The motivation is that these quilts may define a chain map that interpolates between the trajectories defining each of the two Floer theories. 

\medskip

(c) The quilted Atiyah-Floer conjecture is a variant of the older \emph{Atiyah-Floer conjecture}; see \cite{A}. In this latter statement, one considers a homology 3-sphere $Y$, and works with $\SU(2)$, rather than $\SO(3)$. Then the Atiyah-Floer conjecture posits an isomorphism between an instanton Floer homology and a symplectic Floer homology associated to this data. Unfortunately, at the time of writing, the relevant symplectic Floer homology is not well-defined due to issues arising from reducible connections; however, see \cite{SW}. The homology theories relevant for the quilted version are well-posed essentially because the bundles are chosen so that one can avoid the use of reducible connections.

\medskip

(d) In \cite{A}, Atiyah outlined a metric degeneration approach to the Atiyah-Floer conjecture. The degenerating metric $g_\eps$ we define in the next section is, up to conformal equivalence, a direct analogue of Atiyah's metric for the quilted Atiyah-Floer conjecture.
\end{remark}

\subsection{$\eps$-dependent smooth structures}\label{EpsDependentSmoothStructures}

As a matter of notational convenience, we set

$$\Sigma_\bullet \defeq \bigsqcup_i \Sigma_i, \indent Y_\bullet \defeq Y \backslash \left(I \times \Sigma_\bullet\right).$$
We refer to the connected components of the boundary $\partial Y_\bullet = \left\{0, 1\right\} \times \Sigma_\bullet$ as the {\bfseries seams}, and we use the letter $t$ to denote the coordinate variable on the interval $I$. In particular, $dt \in \Omega^1(I \times \Sigma)$ is identified with 

$$df / \left|df \right| \in \Omega^1\left(\cup_i f^{-1}\left(\left[r_i -1/2, r_i+1/2\right]\right)\right)$$ 
under the identification (\ref{id1}). 

Over $I \times \Sigma_\bullet$ the metric $g$ has the form

$$dt^2 + g_{\Sigma}$$
where $g_{\Sigma}$ is a path of metrics on $\Sigma_\bullet$. To simplify the exposition, we assume that $g$ has been chosen so that $g_\Sigma$ is a \emph{constant} path, which can always be achieved. For $\epsilon > 0$ define a new metric $g_\eps$ by

$$g_\epsilon \defeq \left\{\begin{array}{ll} 
											dt^2 + \epsilon^2 g_{\Sigma} & \textrm{on $I \times \Sigma_\bullet$}\\
											\epsilon^2 g & \textrm{on $Y_\bullet$}\\
										\end{array}\right.$$
We will be interested in taking the limit as $\epsilon$ approaches $0$. See Figure \ref{figure5}.

\begin{figure}[h]
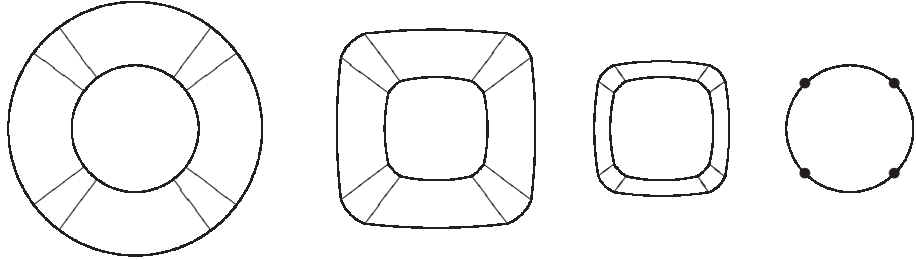

\centerline{ \xy
%%%%%%%%%%left
	(0,-15)*{};
	 (12,-20)*={\xy
		/r.2pc/:,	
		(0,0)*\ellipse(10,10){-};
		(0,0)*\ellipse(20,20){-};
		(16,12)*{}="A1";
		(-16,12)*{}="B1";
		(-16,-12)*{}="C1";
		(16,-12)*{}="D1";
		(12,16)*{}="A2";
		(-12,16)*{}="B2";
		(-12,-16)*{}="C2";
		(12,-16)*{}="D2";
		(8,6)*{}="a1";
		(-8,6)*{}="b1";
		(-8,-6)*{}="c1";
		(8,-6)*{}="d1";
		(6,8)*{}="a2";
		(-6,8)*{}="b2";
		(-6,-8)*{}="c2";
		(6,-8)*{}="d2";
		%%%%%%%%%%%
		"a1";"A1" **\dir{-};
		"b1";"B1" **\dir{-};
		"c1";"C1" **\dir{-};
		"d1";"D1" **\dir{-};
		"a2";"A2" **\dir{-};
		"b2";"B2" **\dir{-};
		"c2";"C2" **\dir{-};
		"d2";"D2" **\dir{-};
	 \endxy};
	 %%%%%%%%%%2nd from left
	 (52,-20)*={\xy
		/r.5pc/:,	
			(4.5,6)*{}="A1";
(6,4.5)*{}="A2";
(-4.5,6)*{}="B1";
(-6,4.5)*{}="B2";
(-4.5,-6)*{}="C1";
(-6,-4.5)*{}="C2";
(4.5,-6)*{}="D1";
(6,-4.5)*{}="D2";
(3,2.25)*{}="a2";
(2.25,3)*{}="a1";
(-3,2.25)*{}="b2";
(-2.25,3)*{}="b1";
(-3,-2.25)*{}="c2";
(-2.25,-3)*{}="c1";
(3,-2.25)*{}="d2";
(2.25,-3)*{}="d1";
%%%%%%%%%%
"A1";"A2" **\crv{(5.55,5.55)};
"B1";"B2" **\crv{(-5.55,5.55)};
"C1";"C2" **\crv{(-5.55,-5.55)};
"D1";"D2" **\crv{(5.55,-5.55)};
%%%%%%%%%%
"a1";"a2" **\crv{(2.7,2.7)};
"b1";"b2" **\crv{(-2.7,2.7)};
"c1";"c2" **\crv{(-2.7,-2.7)};
"d1";"d2" **\crv{(2.7,-2.7)};
%%%%%%%%%%
"A1";"a1" **\dir{-};
"A2";"a2" **\dir{-};
"B1";"b1" **\dir{-};
"B2";"b2" **\dir{-};
"C1";"c1" **\dir{-};
"C2";"c2" **\dir{-};
"D1";"d1" **\dir{-};
"D2";"d2" **\dir{-};
%%%%%%%%%%
"A1";"B1" **\crv{(0,6.5)};
"B2";"C2" **\crv{(-6.5,0)};
"C1";"D1" **\crv{(0,-6.5)};
"D2";"A2" **\crv{(6.5,0)};
%%%%%%%%%%
"a1";"b1" **\crv{(0,3.5)};
"b2";"c2" **\crv{(-3.5,0)};
"c1";"d1" **\crv{(0,-3.5)};
"d2";"a2" **\crv{(3.5,0)};
	 \endxy};
%%%%%%%%%%%%%%3rd from left
 (82,-20)*={\xy
		/r.5pc/:,
		(3,4)*{}="A1";
(4,3)*{}="A2";
(-3,4)*{}="B1";
(-4,3)*{}="B2";
(-3,-4)*{}="C1";
(-4,-3)*{}="C2";
(3,-4)*{}="D1";
(4,-3)*{}="D2";
(3,2.25)*{}="a2";
(2.25,3)*{}="a1";
(-3,2.25)*{}="b2";
(-2.25,3)*{}="b1";
(-3,-2.25)*{}="c2";
(-2.25,-3)*{}="c1";
(3,-2.25)*{}="d2";
(2.25,-3)*{}="d1";
%%%%%%%%%%
"A1";"A2" **\crv{(3.7,3.7)};
"B1";"B2" **\crv{(-3.7,3.7)};
"C1";"C2" **\crv{(-3.7,-3.7)};
"D1";"D2" **\crv{(3.7,-3.7)};
%%%%%%%%%%
"a1";"a2" **\crv{(2.7,2.7)};
"b1";"b2" **\crv{(-2.7,2.7)};
"c1";"c2" **\crv{(-2.7,-2.7)};
"d1";"d2" **\crv{(2.7,-2.7)};
%%%%%%%%%%
"A1";"a1" **\dir{-};
"A2";"a2" **\dir{-};
"B1";"b1" **\dir{-};
"B2";"b2" **\dir{-};
"C1";"c1" **\dir{-};
"C2";"c2" **\dir{-};
"D1";"d1" **\dir{-};
"D2";"d2" **\dir{-};
%%%%%%%%%%
"A1";"B1" **\crv{(0,4.5)};
"B2";"C2" **\crv{(-4.5,0)};
"C1";"D1" **\crv{(0,-4.5)};
"D2";"A2" **\crv{(4.5,0)};
%%%%%%%%%%
"a1";"b1" **\crv{(0,3.5)};
"b2";"c2" **\crv{(-3.5,0)};
"c1";"d1" **\crv{(0,-3.5)};
"d2";"a2" **\crv{(3.5,0)};
	 \endxy};
%%%%%%%%%%last
	  (107,-20)*={\xy
		/r.2pc/:,	
		(0,0)*\ellipse(10,10){-};
		(7.071,7.071)*{\bullet}="A";
		(-7.071,7.071)*{\bullet}="B";
		(-7.071,-7.071)*{\bullet}="C";
		(7.071,-7.071)*{\bullet}="D";
		\endxy};
\endxy}~\\
~\\
~\\
~\\
~\\
\caption{Above are four illustrations of the manifold $Y$; in each, $Y$ is viewed from the top. Moving from left to right, the metric on $Y$ is being deformed in such a way that the volume of the $\Sigma_i$ and the $Y_{i(i+1)}$ are going to zero. However, the volume in the $I$-direction (the `neck') is remaining fixed. In the picture on the far right, the $Y_{i(i+1)}$ have collapsed entirely to the critical points of $f$, which are represented by dots.}
\label{figure5}
\end{figure}

Let ${\mathcal{S}}_1$ denote the smooth structure on $Y$; that is, the smooth structure in which $g$ and $f$ are smooth. We call this the {\bfseries standard smooth structure}. It is important to note that when $\epsilon \neq 1$, the metric $g_\epsilon$ is \emph{not} smooth with the standard smooth structure. For example, take $V = \nabla f / \left| \nabla f\right|$, where the norm and gradient are taken with respect to $g = g_1$. Then away from the critical points of $f$, $V$ is smooth on $(Y, {\mathcal{S}}_1)$, but

\begin{equation}
g_\epsilon(V, V) = \left\{\begin{array}{ll} 
											1 & \textrm{on $I \times \Sigma_\bullet$}\\
											\epsilon^2  & \textrm{on $Y_\bullet$}\backslash \left\{\textrm{critical points} \right\}\\
										\end{array}\right.
\label{shear}
\end{equation}
is not even continuous, so $g_\epsilon$ cannot be continuous on $(Y, {\mathcal{S}_1})$.

\begin{figure}[h]
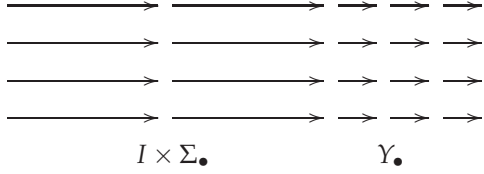

\centerline{
\xy
(-44,0)*{}="E1";
(-24,0)*{}="C1";
{\ar@{->} "E1";"C1"};
(-44,5)*{}="E2";
(-24,5)*{}="C2";
{\ar@{->} "E2";"C2"};
(-44,10)*{}="E3";
(-24,10)*{}="C3";
{\ar@{->} "E3";"C3"};
(-44,15)*{}="E4";
(-24,15)*{}="C4";
{\ar@{->} "E4";"C4"};
(-22,0)*{}="E12";
(-2,0)*{}="C12";
{\ar@{->} "E12";"C12"};
(-22,5)*{}="E22";
(-2,5)*{}="C22";
{\ar@{->} "E22";"C22"};
(-22,10)*{}="E32";
(-2,10)*{}="C32";
{\ar@{->} "E32";"C32"};
(-22,15)*{}="E42";
(-2,15)*{}="C42";
{\ar@{->} "E42";"C42"};
%%%%%%%%%%%%%%
(0,0)*{}="e1";
(5,0)*{}="c1";
{\ar@{->} "e1";"c1"};
(0,5)*{}="e2";
(5,5)*{}="c2";
{\ar@{->} "e2";"c2"};
(0,10)*{}="e3";
(5,10)*{}="c3";
{\ar@{->} "e3";"c3"};
(0,15)*{}="e4";
(5,15)*{}="c4";
{\ar@{->} "e4";"c4"};
(7,0)*{}="e12";
(12,0)*{}="c12";
{\ar@{->} "e12";"c12"};
(7,5)*{}="e22";
(12,5)*{}="c22";
{\ar@{->} "e22";"c22"};
(7,10)*{}="e32";
(12,10)*{}="c32";
{\ar@{->} "e32";"c32"};
(7,15)*{}="e42";
(12,15)*{}="c42";
{\ar@{->} "e42";"c42"};
%%%%%%%%%
(14,0)*{}="e13";
(19,0)*{}="c13";
{\ar@{->} "e13";"c13"};
(14,5)*{}="e23";
(19,5)*{}="c23";
{\ar@{->} "e23";"c23"};
(14,10)*{}="e33";
(19,10)*{}="c33";
{\ar@{->} "e33";"c33"};
(14,15)*{}="e43";
(19,15)*{}="c43";
{\ar@{->} "e43";"c43"};
%%%%%%%%%%%%
(-22,-5)*{ I \times \Sigma_\bullet};
(7,-5)*{ Y_\bullet};
\endxy}
\caption{The vector field $V$ is not continuous with respect to the topology on $TY$ defined using the standard smooth structure. However, it is continuous with respect to the topology on $TY$ defined using the $\epsilon$-\emph{dependent} smooth structure.}
\label{figure7}
\end{figure}

However, there is a \emph{different} smooth structure ${\mathcal{S}}_\epsilon$ in which $g_\epsilon$ is smooth. Moreover, $(Y, {\mathcal{S}}_\epsilon)$ is \emph{diffeomorphic} to $(Y, {\mathcal{S}}_1)$. This can be seen as follows: View $Y$ as the \emph{topological} manifold in (\ref{decopmofYbullet}). Following Milnor \cite[Theorem 1.4]{Mil}, any choice of collar neighborhoods of the seams determines a smooth structure on $Y$, and any two choices are given by isotopic data. These isotopies determine a diffeomorphism between the smooth structures. In this language, the smooth structure ${\mathcal{S}}_1$ arises by choosing collar neighborhoods of the seams determined by the time-$\delta$ gradient flow of $f$, and then using the identity to glue these neighborhoods on the overlap. Here $\delta > 0$ is small, but fixed. On the other hand, the smooth structure ${\mathcal{S}}_\epsilon$ arises by taking the time-$\delta$ gradient flow on the $Y_{i(i+1)}$ side of the seam $\left\{1\right\} \times \Sigma_i$, but the time-$\delta \epsilon$ gradient flow on the $\left[0, 1\right] \times \Sigma_i$ side of the seam, and then gluing using the gluing map $(t, \sigma) \mapsto (\epsilon t, \sigma).$ 

By the discussion of the previous paragraph, there is a diffeomorphism 

\begin{equation}
F_\epsilon: (Y, {\mathcal{S}}_1) \longrightarrow (Y, {\mathcal{S}}_\epsilon).
\label{milnorsdiffeo}
\end{equation}
In fact, there is a canonical choice of $F_\eps$ given by taking the obvious straight line homotopy between the gluing maps described above. When the specific smooth structure on $Y$ is relevant, we will write $Y^\epsilon$ for $(Y, {\mathcal{S}}_\epsilon)$. When a function, section of a bundle, connection, etc. is smooth on $Y^\epsilon$, we will say that it is {\bfseries $\epsilon$-smooth}. We will write $f^\eps$ for the pullback of $f: Y \rightarrow S^1$ under $F_\eps^{-1}$.

Observe that $g_\eps$ only fails to be smooth on $Y^1$ at the seams $\left\{0, 1\right\} \times \Sigma_i$. Furthermore, even at the seams, the metric $g_\epsilon$ is smooth in directions \emph{parallel} to the seams. So the discontinuity illustrated in Figure \ref{figure7} is the only thing that goes wrong.

\begin{remark}
Fix $1 \leq p \leq \infty$ and let $W^{1,p}(Y^1)$ denote the Sobolev space associated to the standard smooth structure on $Y = Y^1$. By passing to local coordinates, it is straightforward to show that every $\epsilon$-smooth \emph{function} $h$ on $Y^\epsilon$ is of Sobolev class $W^{1,p}(Y^1)$. That is, the identity map on $Y$ determines a pullback map of the form $\mathrm{Id}^*: C^\infty(Y^\eps) \longrightarrow W^{1,p}(Y^1)$. This can be seen as follows: The underlying topologies on $Y^1$ and $Y^\eps$ are identical, so the $\eps$-smooth function $h$ is continuous on $Y^1$. Moreover, on the complement of the seams, $h$ is 1-smooth with bounded derivative. This implies $\mathrm{Id}^*h$ is of Sobolev class $W^{1,p}$. 

However, in general, an $\epsilon$-smooth \emph{tensor} will only be in $L^p(Y^1)$ with respect to the standard smooth structure. Indeed, any tensor that is non-zero in directions transverse to the seam will necessarily have a jump discontinuity as in Figure \ref{figure7}. So taking a derivative transverse to the seam will introduce a delta function. This applies to connections as well. 
\end{remark}

\subsection{Gauge theory}\label{GaugeTheory}

		Let $X$ be an oriented Riemannian $n$-manifold, possibly with boundary. Given a fiber bundle $E \rightarrow X$, we denote the space of smooth sections by $\Gamma(E)$. Now suppose $E$ is a vector bundle equipped with a metric $\langle \cdot, \cdot \rangle$. Then we write

$$\Omega^\bullet(X, E) \defeq \bigoplus_k \Omega^k(X, E), \indent \Omega^k(X, E) \defeq \Gamma(\Lambda^k T^*X \otimes E),$$
for the spaces of smooth $E$-valued forms on $X$. Given $\mu \in \Omega^j(X, E) , \nu \in \Omega^{k}(X, E)$ we can combine the inner product with the wedge to define a new section $\langle \mu \wedge \nu \rangle \in \Omega^{j+k}(X)$. When $X$ is compact, this can be integrated to obtain a non-degenerate bilinear pairing on forms of dual degree:
			
			\begin{equation}
			\Omega^k(X, E) \otimes \Omega^{n-k}(X, E) \longrightarrow \bb{R}, \indent \indent \indent \mu\otimes \nu  \longmapsto  \intd{X} \langle \mu \wedge \nu \rangle.
			\label{pairing}
			\end{equation}

			The metric on $X$ induces a Hodge star $*: \Omega^k(M, E) \rightarrow \Omega^{n-k}(M, E)$, and this satisfies $** = (-1)^{k(n-k)}$ on $k$-forms. Sticking $*$ in the second slot of (\ref{pairing}) defines an $L^2$-inner product on the vector space $\Omega^k(X, E)$. More generally, for $p \geq 1$ we can define the $L^p$-norm on $\Omega^k(M, E) $ by
			
			$$\Vert \mu \Vert_{L^p}^p \defeq  \intd{X} \vert \mu \vert^p \: d \mathrm{vol},$$
			where $\vert \mu \vert \defeq \left(*\langle \mu \wedge * \mu \rangle \right)^{1/2}$. Similarly, fixing a metric connection on $E$, we can define the Sobolev norms $\Vert \cdot \Vert_{W^{k, p}}$ on $\Omega^\bullet(X, E)$ by the usual formula. We denote by $W^{k, p}(E)$ the closure, with respect to $\Vert \cdot \Vert_{W^{k, p}}$, of the space of compactly supported smooth $E$-valued forms on $X$. When the bundle $E$ is clear from context, we will write $W^{k, p}$ for $W^{k,p}(E)$. In situations where it is particularly important to emphasize the underlying base manifold, we will write $W^{k,p}(X)$ instead of $W^{k,p}(E)$.

			\medskip			
			
			Let $G$ be a compact Lie group with Lie algebra $\frak{g}$, and let $\pi: P \rightarrow X$ be a principal $G$-bundle. Given any matrix representation $\rho: G \rightarrow \GL(V)$, we can form the {\bfseries associated bundle}
			 
			$$P(V) \defeq P \times_G V = (P \times V) / G.$$
			This is naturally equipped with the structure of vector bundle $P(V) \rightarrow X$. Of particular interest is the case when $V = \frak{g}$ is the Lie algebra of $G$, and $\rho$ is the adjoint representation. We will assume that $\frak{g}$ is equipped with a choice of $\mathrm{Ad}$-invariant inner product $\langle \cdot, \cdot \rangle$. This is always possible since $G$ is compact, and this is unique (up to multiplication by a scalar) when $\frak{g}$ is simple. The $\mathrm{Ad}$-invariance implies that the inner product descends to a bundle metric $\langle \cdot, \cdot \rangle$ on the vector bundle $P(\frak{g})$.

			The Lie bracket $\left[ \cdot, \cdot \right]: \frak{g} \otimes \frak{g} \rightarrow \bb{R}$ is Ad-invariant, and so combines with the wedge to endow $\Omega^{\bullet}(X, P(\frak{g}))$ and $\Omega^\bullet(P, \frak{g})$ with the structure of a graded algebra; we denote this multiplication by $\left[ \mu \wedge \nu \right]$ in both cases. Then pullback by $\pi:P \rightarrow X$ induces a graded algebra homomorphism
			
			$$\pi^*: \Omega^\bullet(X, P(\frak{g})) \longrightarrow \Omega^\bullet(P, \frak{g}).$$
			Moreover, $\pi^*$ is injective with image given by the {\bfseries basic} forms

$$\Omega^\bullet(P, \frak{g})_{\ba} \defeq \left\{\mu \in \Omega^\bullet(P, \frak{g}) \left| \begin{array}{ll}
																								(\afu_P)^*\mu= \mathrm{Ad}(\afu^{-1}) \mu,& \forall \afu \in G\\
																								\iota_{\xi_P} \mu = 0, & \forall \xi \in \frak{g}\\
																								\end{array} \right.\right\}.$$
			Here $\afu_P$ (resp. $\xi_P$) is the image of $\afu \in G$ (resp. $\xi \in \frak{g}$) under the map $G \rightarrow \mathrm{Diff}(P)$ (resp. $\frak{g} \rightarrow \mathrm{Vect}(P)$) afforded by the group action. We will use $\pi^*$ to identify the spaces $\Omega^\bullet(X, P(\frak{g})) $ and $\Omega^\bullet(P, \frak{g})_\ba$.

\medskip

		Denote by 
		
		$${\mathcal{A}}(P) = \left\{\afA \in \Omega^1(P, \frak{g}) \left|\begin{array}{ll}
																								(\afu_P)^*\afA = \mathrm{Ad}(\afu^{-1}) \afA,& \forall \afu \in G\\
																								\iota_{\xi_P} \afA = \xi, & \forall \xi \in \frak{g}\\
																								\end{array}\right. \right\}$$
		the space of {\bfseries connections} on $P$. Then $\A(P)$ is an affine space modeled on $\Omega^1(X, P(\frak{g}))$, and the affine action is given by $\afA +  \pi^* \mu$ for $\afA \in \A(P)$ and $\mu \in \Omega^1(X, P(\frak{g}))$. In particular, $\A(P)$ is a smooth (infinite dimensional) manifold with tangent space $\Omega^1(X, P(\frak{g}))$. Each connection $\afA \in {\A}(P)$ determines a covariant derivative
		
		$$d_{\afA}: \Omega^\bullet(X, P(\frak{g}))  \longrightarrow  \Omega^{\bullet + 1} (X, P(\frak{g})), \indent \indent \indent \mu  \longmapsto (\pi^*)^{-1}\left( d \left( \pi^*\mu\right)+ \left[ {\afA}\wedge \pi^*\mu\right]\right),$$
			where $d$ is the trivial connection on the trivial bundle $P \times \frak{g}$. Define the {\bfseries curvature} of $\afA$ by
			
			$$F_{\afA} = (\pi^*)^{-1}\left(d {\afA} + \frac{1}{2} \left[{\afA} \wedge {\afA} \right]\right) \in \Omega^2(X, P(\frak{g})).$$
			It follows that $d_{\afA}(d_{\afA}(\mu)) = \left[ F_{\afA} \wedge \mu\right]$ for all $\mu \in \Omega^\bullet(X, P(\frak{g}))$. It is convenient to view $\Omega^\bullet(X, P(\frak{g}))$ as acting on itself by left multiplication, in which case we can write $d_{\afA} \circ d_{\afA} = F_{\afA}$. In identifying $\Omega^\bullet(X, P(\frak{g}))$ with $\Omega^\bullet(P, \frak{g})_\ba$ we will typically drop $\pi^*$ from the notation. For example, the covariant derivative and curvature satisfy the following
			
			$$
			d_{{\afA} + \mu} = d_{\afA} + \mu, \indent \indent \indent  F_{{\afA} + \mu}  =  F_{\afA} + d_{\afA} \mu + \frac{1}{2} \left[\mu \wedge \mu \right], \indent \indent \indent d_{\afA} F_{\afA} = 0.$$
			The last is the {\bfseries Bianchi identity}.
			
				Given a connection ${\afA} \in \A(P)$ with covariant derivative $d_{\afA}$, we define the {\bfseries formal adjoint} on $k$-forms by $d_{\afA}^* \defeq -(-1)^{(n-k)(k-1)} *d_{\afA}* \mu$. Stokes' theorem shows that this satisfies $(d_{\afA} \mu, \nu) = (\mu, d_{\afA}^* \nu)$, where $\left( \cdot, \cdot\right)$ is the $L^2$-pairing, and $\mu, \nu$ have compact support in the interior of $X$. It follows from the scaling properties of forms that $d_{\afA}^{*_c} = c^{-2}d_{\afA}^*,$ where $d_{\afA}^{*_c}$ is the adjoint defined with respect to the metric $c^2g$. 		
				
			We will be interested in the {\bfseries flat} connections ${\afA} \in \A(P)$. By definition, these satisfy $F_{\afA} = 0$, and we will denote the set of flat connections on $P$ by $\A_{\fl}(P)$. If ${\afA}$ is flat then $\mathrm{im}\: d_{\afA} \subseteq \ker d_{\afA}$ and we can form the {\bfseries harmonic spaces}
			
			$$H^k_{\afA} \defeq H^k_{\afA}(X, P(\frak{g})) \defeq \frac{\ker\left( d_{\afA}\vert \Omega^k(X, P(\frak{g}))\right)}{\mathrm{im}\left( d_{\afA}\vert \Omega^{k-1}(X, P(\frak{g}))\right)}, \indent \indent \indent H^\bullet_{\afA}  \defeq \bigoplus_k H^k_{\afA} .$$
			In general, we say that a connection ${\afA}$ is {\bfseries irreducible} if $d_{\afA}$ is injective on $\Omega^0(X, P(\frak{g}))$ (when $\afA$ is flat this is equivalent to $H^0_{\afA} = 0$). It follows from (\ref{infgaugeaction}) below that if ${\afA}$ is irreducible, then the gauge action at ${\afA}$ is locally free.

			Suppose now that $X$ is closed. Then the Hodge isomorphism \cite[Theorem 6.8]{War} says
			\begin{equation}
			H^\bullet_{\afA} \cong \ker(d_{\afA} \oplus d_{\afA}^*), \indent \Omega^\bullet(X, P(\frak{g})) \cong H_{\afA}^\bullet \oplus \mathrm{im}(d_{\afA}) \oplus \mathrm{im}(d_{\afA}^*),
			\label{hodegedecomp}
			\end{equation}
			for any flat connection ${\afA}$ on $X$. Here the decomposition is orthogonal with respect to the $L^2$-inner product defined above. We will treat these isomorphisms as identifications. From the first isomorphism in (\ref{hodegedecomp}) we see that $H_{\afA}^\bullet$ is finite dimensional since $d_{\afA} \oplus d_{\afA}^*$ is elliptic (locally its leading order term is $d \oplus d^*$). Furthermore, it is clear that $*:H^\bullet_{\afA} \rightarrow H^{n-\bullet}_{\afA}$ restricts to an isomorphism on the harmonic spaces, and so the pairing (\ref{pairing}) continues to be non-degenerate when restricted to the harmonic spaces.

			\begin{example}
			Suppose $X = \Sigma$ is a closed, oriented surface equipped with a metric. Then on 1-forms $*$ squares to -1, and so defines a complex structure on $\Omega^1(\Sigma, P(\frak{g}))$. Furthermore, the pairing in (\ref{pairing}) determines a symplectic structure $\omega$ on $\Omega^1(\Sigma, P(\frak{g}))$ and the triple $(\Omega^1(\Sigma, P(\frak{g})), *, \omega)$ is K\"{a}hler. Similarly, $(H^1_{\afA}, *, \omega)$ is K\"{a}hler and finite-dimensional.
			\end{example}

			The {\bfseries Yang-Mills functional} is defined to be the map
	
	$$\YM: \A(P)  \longrightarrow  \bb{R}, \indent \indent {\afA}  \longmapsto  \fracd{1}{2} \Vert F_{\afA} \Vert_{L^2}^2.$$
		We will refer to the value at $\afA$ as the {\bfseries energy} of the connection $\afA$. Suppose $X$ is a 4-manifold. On 2-forms the Hodge star squares to the identity, and we denote by $\Omega^{\pm} (X, P(\frak{g}))$ the $\pm 1$ eigenspace of $*$. Then we have an $L^2$-orthogonal decomposition $\Omega^2(X, P(\frak{g})) = \Omega^{+} (X, P(\frak{g}))  \oplus \Omega^{-} (X, P(\frak{g})),$ with the orthogonal projection to $\Omega^{\pm} (X, P(\frak{g}))$ given by $\mu \mapsto \frac{1}{2}(1\pm *)\mu$. The elements of $\Omega^{-}(X, P(\frak{g}))$ are called {\bfseries anti-self dual} 2-forms. A connection ${\afA}$ is said to be {\bfseries anti-self dual (ASD)} or an {\bfseries instanton} if its curvature $F_{\afA} \in \Omega^{-} (X, P(\frak{g}))$ is an anti-self dual 2-form. The instantons are the minimizers of the Yang-Mills functional.

		\medskip

			A {\bfseries gauge transformation} is an equivariant bundle map $\afu:P \rightarrow P$ covering the identity. The set of gauge transformations on $P$ forms an infinite-dimensional Lie group, called the {\bf gauge group}, and is denoted ${\G}(P)$. This can be equivalently described as (i) the group of $G$-equivariant maps $P \rightarrow G$, or (ii) the group of sections of the fiber bundle $P \times_G G \rightarrow X$; in both cases $G$ is acting on itself by conjugation. From this latter description it is clear that the space $\Omega^0(X , P(\frak{g})) = \Gamma\left(P \times_G \frak{g}  \right)$ can be identified with the Lie algebra of $\G(P)$. Moreover, given $\xi \in \Omega^0(X, P(\frak{g}))$, we define $\exp(\xi) \in \G(P)$ to be the section $x \longmapsto \exp(-\xi(x))$, where in this latter formula $\exp$ is the exponential for $G$ (the negative sign is introduced to absorb a negative sign later). We define $\G_0(P) \subseteq \G(P)$ to be the connected component of the identity gauge transformation. 
			
			\begin{remark}\label{representationremark}
			Fix a faithful matrix representation $G \rightarrow \GL(\bb{C}^n)\subset \End(\bb{C}^n)$. This induces a Lie algebra representation $\frak{g} \rightarrow \End(\bb{C}^n)$. Then $\Omega^0(X, P(\frak{g}))$ and $\G(P)$ both embed into $\Gamma(P \times_G \mathrm{End}(\bb{C}^n))$. With respect to this embedding, it makes sense to multiply elements of $\G(P)$ with elements of its Lie algebra.
			\end{remark}

			The gauge group acts on the left on the space $\Omega^\bullet(P, \frak{g})$ by $({\afu}, {\afA}) \longmapsto  ({\afu}^{-1})^* {\afA}$, for ${\afu} \in \G(P), \:{\afA} \in \Omega^\bullet(P, \frak{g})$. Here the star indicates the pullback map induced by the bundle map ${\afu}^{-1}: P \rightarrow P$. The action of $\G(P)$ on $\Omega^\bullet(P, \frak{g})$ restricts to actions on $\Omega^\bullet(P, \frak{g})_{\ba}$ and ${\A}(P)$. Viewing a gauge transformation as a map ${\afu}: P \rightarrow G$ we can write this latter action as

			\begin{equation}
			({\afu}^{-1})^* {\afA}= {\afu}{\afA}{\afu}^{-1} + {\afu} d({\afu}^{-1}),
			\label{slop}
			\end{equation}
			where we have fixed a faithful matrix representation as in Remark \ref{representationremark}, and so the concatenation appearing on the right is just matrix multiplication. The curvature of ${\afA} \in \A(P)$ transforms under ${\afu}\in \G(P)$ by $F_{({\afu}^{-1})^*{\afA}} = \mathrm{Ad}({\afu})F_{\afA}$. This shows that $\G(P)$ restricts to an action on ${\A}_{\fl}(P)$ and, in 4-dimensions, the instantons. The infinitesimal action of ${\G}(P)$ at ${\afA} \in {\A}(P)$ takes the form
			
			\begin{equation}
				\Omega^0(X, P(\frak{g}))   \longrightarrow  \Omega^1(X, P(\frak{g})), \indent \indent \indent \xi  \longmapsto  d_{\afA} \xi.
				\label{infgaugeaction}
				\end{equation}

\medskip

Suppose $X$ is compact. Fix a reference connection ${\afA}_{\mathrm{ref}} \in {\A}(P)$. This combines with the Levi-Civita connection to define a norm on $W^{k,p}\left(\Lambda^j T^*X \otimes P(\frak{g})\right)$, as above. Then we set
				
				$$\A^{k,p}(P) \defeq {\afA}_{\mathrm{ref}} + W^{k,p}\left(T^*X \otimes P(\frak{g})\right).$$
				The space $\A^{k,p}(P)$ is independent of the choice of reference connection ${\afA}_{\mathrm{ref}}$, and the norm only depends on ${\afA}_{\mathrm{ref}}$ up to norm equivalence. Moreover, $\A^{k,p}(P) $ is an affine space modeled on the vector space $W^{k,p}\left(T^*X \otimes P(\frak{g})\right)$, and so Sobolev's embedding theorems carry over directly to $\A^{k, p}(P)$.

				Given an embedding $G \subseteq \U(r)$, define $\G^{k,p}(P)$ to be the subset of sections in $W^{k,p}( \mathrm{End}(\bb{C}^{r}))$ whose images lie in $G \subset \U(r) \subset \mathrm{End}(\bb{C}^{r})$. Under such an embedding, $G$ necessarily has measure zero in $\mathrm{End}(\bb{C}^r)$; nonetheless, this is a meaningful definition whenever we are in the continuous range for Sobolev embedding (e.g. $kp \geq n$ and $k \geq 2$, or $kp > n$ and $k = 1$). The space $\G^{k,p}(P)$ forms a group when we are in the continuous range, and the group operations are smooth, making $\G^{k, p}(P)$ a Banach Lie group. Moreover, in the continuous Sobolev range the group $\G^{k,p}(P)$ acts smoothly on $\A^{k-1,p}(P)$. See \cite[Appendix B]{Wuc} for more details. When $X$ is non-compact, it is convenient to consider the space $\G_{loc}^{k,p}(P)$ of sections that are {\bfseries locally} $W^{k, p}$, defined in the obvious way.

				Next we address the case where $X = \bb{R} \times Y$, and $Y$ is compact. We assume that $P = \bb{R} \times Q$ is the pullback of a principal $G$-bundle $Q$ over $Y$. Fix connections $\afa^\pm \in \A(Q)$ on $Y$. Let ${\afA}_{\mathrm{ref}}$ be any connection on $\bb{R} \times Q$ that is constantly equal to $\afa^-$ on $\left(-\infty, -S_0 \right] \times Y$, and $\afa^+$ on $\left[ S_0 , \infty \right) \times Y$ for some $S_0 > 0$. Then we set
				
				\begin{equation}\label{a0}
				\A^{k, p}(\afa^-, \afa^+) \defeq {\afA}_{\mathrm{ref}} + W^{k,p}\left(T^*X \otimes P(\frak{g})\right).
				\end{equation}
				This is a Banach affine space, and all elements limit to $\afa^\pm$ at $\pm \infty$. Moreover, as a topological space, it is independent of the choice of reference connection that agrees with the $\afa^\pm$ off of a compact set. However, the particular choice of Banach norm depends on the choice of $\afA_{\mathrm{ref}}$. The appropriate gauge group is 
				
				\begin{equation}\label{g0}
				\G_c^{k, p}(\bb{R} \times Q),
				\end{equation}
				which we define to be the $W^{k,p}$-closure of the elements of $\G^{k, p}_{loc}(\bb{R} \times Q)$ that restrict to the identity element off of compact sets. It follows that $\G_c^{k, p}(\bb{R} \times Q)$ is a Banach Lie group with Lie algebra the $W^{k,p}$-completion of the compactly supported 0-forms with values in $Q(\frak{g})$. Moreover, $\G_c^{k, p}(\bb{R} \times Q)$ acts smoothly on the space $\A^{k-1, p}(\afa^-, \afa^+)$, at least when $k, p$ are in the appropriate Sobolev range.

		Let $s$ denote the coordinate variable on $\bb{R}$, and $\frac{\partial}{\partial s} \in \mathrm{Vect}(\bb{R} \times P)$ the obvious vector field. Then a connection ${\afA} \in \Omega^1(\bb{R} \times P, \frak{g})$ is in {\bfseries temporal gauge} if its contraction with $\partial / \partial s$ vanishes $\iota_{\partial / \partial s } {\afA}  = 0.$ It can be shown that for each connection ${\afA}$, there is an identity-component gauge transformation ${\afu} \in \G_0(\bb{R} \times P)$ with ${\afu}^* {\afA}$ in temporal gauge. Note that any connection ${\afA} \in \A(\bb{R} \times P)$ can be written in the form ${\afA} = \afa(s) + \afp(s) \: ds$ for unique $\afa: \bb{R} \rightarrow \A(P)$ and $\afp: \bb{R} \rightarrow \Omega^0(X, P(\frak{g}))$, where $ds \in \Omega^1(\bb{R} \times Y)$ is the obvious 1-form. Then ${\afA}$ is in temporal gauge if and only if $\afp = 0$.

		\medskip

	We will need a \emph{perturbed} version of Uhlenbeck's compactness theorem for flat connections and instantons. Specifically, we fix a $\G(P)$-equivariant map $V : \A(P) \rightarrow \Omega^2(X, P(\frak{g}))$ which is smooth with respect to the $W^{k, p}$-topology on each space, where $k, p \geq 1$. We assume the following:
	
	\begin{itemize}
	\item[(a)] $V$ is uniformly $L^\infty$-bounded, $\sup_{A \in \A(P)} \Vert V(A) \Vert_{L^\infty(X)} < \infty$.
	\item[(b)] If $\left\{A_n \right\}$ is a sequence of connections such that $\Vert A_n - A_0 \Vert_{L^p(X)}$ is bounded, then $V(A_n)$ has an $L^p$-convergent subsequence.
	\end{itemize}
	In particular, Kronheimer proves that these conditions are satisfied whenever $V$ is the differential of a holonomy perturbation \cite[Lemma 10]{Kron}. 
	
	\begin{theorem}\cite[Proposition 11]{Kron} \label{kronsmagic}
	 Suppose $V$ satisfies conditions (a) and (b) above, and $\left\{A_n \right\}$ is a sequence of connections on a fixed bundle over $X$ such that $\Vert F_{A_n} - V(A_n) \Vert_{L^p(X)}$ is bounded. Then there is a finite set of points $B \subset X$, a subsequence (still denoted $\left\{A_n\right\}$), and a sequence of gauge transformations $\left\{\afu_n\right\}$ such that $\afu_n^* \afA_n$ converges weakly in $W^{1, p}$ on compact subsets of $X \backslash B$ to a connection $\afA_\infty$ defined on $X \backslash B$. If $ F_{A_n} - V(A_n) = 0$ for all $n$, then this limiting connection satisfies $F_{A_\infty} - V(A_\infty)=0$, the sequence $\afu^* \afA_n$ converges strongly in $W^{1, p}$ on compact subsets of $X \backslash B$, and $A_\infty$ is defined on a (possibly different) bundle over $X$.
	
	If $\dim\: X = 4$, then the same result holds with $F_{A} - V(A)$ replaced by $(F_{A} - V(A))^+$, where the superscript $+$ denotes the anti-self dual component. 
	\end{theorem}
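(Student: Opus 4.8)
The plan is to prove this by adapting the standard proof of Uhlenbeck's compactness theorem, using conditions (a) and (b) only to control the perturbation term $V$. First, since $V(A_n)$ is $L^\infty$-bounded by (a), the hypothesis that $\Vert F_{A_n} - V(A_n) \Vert_{L^p}$ is bounded gives a bound on $\Vert F_{A_n} \Vert_{L^p(K)}$, and hence on $\Vert F_{A_n} \Vert_{L^{n/2}(K)}$, for every compact $K \subseteq X$ (exhausting $X$ if it is non-compact), provided $p$ lies in the range where Uhlenbeck's gauge-fixing theorem applies. Let $\epsilon_0 > 0$ be the threshold from the local Coulomb gauge lemma on $X$, and let $B$ be the set of $x \in X$ with $\limsup_n \int_{B_r(x)} |F_{A_n}|^{n/2} \geq \epsilon_0$ for every $r > 0$; a covering argument bounds the cardinality of $B$ by a constant times $\sup_n \Vert F_{A_n} \Vert_{L^{n/2}}^{n/2} / \epsilon_0$, so $B$ is finite. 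When $F_{A_n} = V(A_n)$ exactly, condition (a) makes $|F_{A_n}|$ pointwise bounded, so no concentration occurs and $B = \emptyset$; when $\dim X = 4$ and $(F_{A_n} - V(A_n))^+ = 0$, the quantities $\Vert F_{A_n}^+ \Vert_{L^2}$ and $\Vert F_{A_n}^- \Vert_{L^2}$ differ by a fixed topological constant (the bundle being fixed), so together with (a) one again bounds $\Vert F_{A_n} \Vert_{L^2}$ and $B$ is finite.

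Next, away from $B$ I would cover any compact $K \subseteq X \setminus B$ by finitely many small balls on each of which $\int |F_{A_n}|^{n/2} < \epsilon_0$, apply Uhlenbeck's local lemma to put $A_n$ in Coulomb gauge on each ball with $W^{1,p}$-norm controlled by the local $L^p$-norm of $F_{A_n}$ (bootstrapping from $W^{1,n/2}$, absorbing the quadratic term $[A_n \wedge A_n]$ using smallness of the curvature), and then combine these local gauges into a single gauge transformation $u_n$ on $K$ by the usual inductive patching argument, correcting on overlaps by gauge transformations close to the identity. This gives $\Vert u_n^* A_n \Vert_{W^{1,p}(K)} \leq C(K)$ uniformly. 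Exhausting $X \setminus B$ by compacts and diagonalizing (the transitions between successive gauges being bounded gauge transformations on overlaps), one obtains $u_n$ on $X \setminus B$ with $u_n^* A_n$ bounded in $W^{1,p}$ on compacts; Banach--Alaoglu and Rellich then yield a subsequence converging weakly in $W^{1,p}$ and strongly in $L^q$ on compact subsets of $X \setminus B$ to a connection $A_\infty$ on $X \setminus B$. This proves the first assertion.

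Now suppose $F_{A_n} - V(A_n) = 0$ (the 4-dimensional case being identical with $F$ replaced by $F^+$). By gauge-equivariance, $F_{u_n^* A_n} = V(u_n^* A_n)$, whose right-hand side is $L^\infty$-bounded by (a); the Coulomb-gauge elliptic estimate therefore bootstraps $u_n^* A_n$ to a uniform bound in $W^{1,q}$ on compacts for every $q < \infty$, hence in $C^0$ on compacts, so by Arzel\`a--Ascoli $u_n^* A_n \to A_\infty$ strongly in $C^0$ on compacts. The holonomies of $u_n^* A_n$ then converge, and by smoothness of $V$ together with the complete continuity encoded in (b) one gets $V(u_n^* A_n) \to V(A_\infty)$ in $L^p$ on compacts; passing to the limit in $F_{u_n^* A_n} = V(u_n^* A_n)$ (linear part of the curvature converging weakly, quadratic part strongly) gives $F_{A_\infty} = V(A_\infty)$. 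For strong convergence, set $a_n = u_n^* A_n - A_\infty$: the two equations plus the Coulomb condition give $(d + d^*) a_n \to 0$ in $L^p$ on compacts, up to controlled patching errors, and since $a_n \to 0$ strongly in $L^p$ on compacts, the elliptic estimate forces $\Vert a_n \Vert_{W^{1,p}(K)} \to 0$ for every compact $K$.

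Finally, for the extension across $B$: in dimensions $\leq 3$ with the exact equation $B = \emptyset$ and $A_\infty$ lives on the original bundle, while for $\dim X = 4$ the limit $A_\infty$ solves the perturbed ASD equation on a punctured neighbourhood of each $b \in B$ with $L^\infty$ inhomogeneity and finite $L^2$ curvature, so Uhlenbeck's removable-singularity theorem (unaffected by the bounded perturbation) extends it smoothly across $b$ after a gauge transformation on some local bundle; gluing these to $P|_{X \setminus B}$ produces the bundle over $X$ carrying $A_\infty$, whose topology may differ from that of $P$ by the characteristic numbers carried off in the bubbles at $B$. I expect the main obstacle to be, exactly as in the unperturbed theorem, the gauge patching in the second step, which is where the finiteness of $B$ and the uniform smallness of the curvature on the covering balls are used; the only genuinely new point is the handling of $V$ in the third step, and conditions (a) and (b) are tailored precisely for it --- (a) supplying the bootstrap to $C^0$ and hence convergence of holonomies, and (b) guaranteeing that $V$ is completely continuous so that the limit of $V(A_n)$ is $V$ of the limit.
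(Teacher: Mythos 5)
The paper does not prove this result; it is imported directly from Kronheimer, cited as \cite[Proposition 11]{Kron}, so there is no in-paper argument against which to compare you. Your sketch is a faithful rendering of the Kronheimer-type argument: run the usual Uhlenbeck scheme (local Coulomb gauges on small balls, patch, diagonalize, extract weak $W^{1,p}$ limits off a finite bubbling set), using hypothesis (a) to turn the bound on $F_{A_n}-V(A_n)$ into a curvature bound; then in the exact case use the $L^\infty$ bound on $V$ to bootstrap $u_n^*A_n$ to $W^{1,q}$ for all finite $q$ and hence $C^{0,\alpha}$, and use (b) to pass to the limit in the perturbed equation and upgrade to strong $W^{1,p}$ convergence. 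Your closing observation about why the bootstrap stops short of $C^\infty$ --- no uniform control of derivatives of $V$, only the $L^\infty$ bound --- is precisely the point Kronheimer is credited with in the paragraph after the theorem.

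One step you should tighten: you say that ``(b) guarantees that the limit of $V(A_n)$ is $V$ of the limit.'' Condition (b) as stated gives only that $V(u_n^*A_n)$ has an $L^p$-convergent subsequence; it does not by itself identify the subsequential limit as $V(A_\infty)$. Closing this requires invoking the continuity of $V$ (which the paper assumes by requiring $V$ to be smooth for the $W^{k,p}$-topology) together with the strong $C^0$ (or $L^p$) convergence of $u_n^*A_n$ that you establish --- and this must be done globally on $X$, which is available in the exact case because you have already shown $B=\emptyset$ so the gauges $u_n$ extend over all of $X$. Also note that your use of the topological identity $\|F^+\|_{L^2}^2-\|F^-\|_{L^2}^2=\mathrm{const}$ to bound $\|F_{A_n}\|_{L^2}$ requires $X$ closed; on a cylinder $\mathbb{R}\times Y$ (the case the paper actually uses) this must be replaced by a finite-energy hypothesis, but the rest of the argument is unaffected. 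With these two points clarified, your proposal is a correct account of the proof.
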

	
	Consider the case where $(F_{A_n} - V(A_n))^+ = 0$ for all $n$. When $V$ is zero, the (unperturbed version of) Uhlenbeck's compactness theorem implies that a subsequence of $\afu^*_n \afA_n$ converges in ${\mathcal{C}}^\infty$ to $\afA_\infty$. Kronheimer points out that when $V$ is a (non-zero) holonomy perturbation, then one can no longer expect to get anything stronger than the $W^{1, p}$ convergence of Theorem \ref{kronsmagic}. This is due to the non-locality inherent in holonomy perturbations.

\medskip

As we have already begun to see, when the base manifold $X$ is a product, it is useful to have a notation that distinguishes between a connection or gauge transformation and its components. We therefore adopt the following notation: 

\begin{itemize}
\item On 4-manifolds we will use $\afA$ and $\afU$ for connections and gauge transformations.
\item We tend to denote 3-manifolds by $Y$, and use $\afa$ and $\afu$ for connections and gauge transformations on $Y$.
\item We tend to denote 2-manifolds by $\Sigma$, and use $\afalpha$ and $\afmu$ for connections and gauge transformations on $\Sigma$.
\end{itemize}
On the other hand, when the base manifold has no relevant product structure, then we continue to use $\afA$ and $\afu$ for connections and gauge transformations.

\subsection{Topological aspects of principal $\PU(r)$-bundles}\label{TopolicaAspectsOfPrincipalPU(r)-Bundles}

			Throughout the remainder of this paper we work primarily with the Lie group $\PU(r) \defeq \SU(r)/ \bb{Z}_r = \U(r) / \U(1),$ for $r \geq 2$.	 It follows that $\pi_1(\PU(r)) = \bb{Z}_r,$ since $\SU(r)$ is simply-connected. We fix an $\mathrm{Ad}$-invariant inner product $\langle \cdot, \cdot \rangle$ on the Lie algebra ${\frak{pu}}(r)$ to $\PU(r)$. This Lie algebra is simple, and so $\langle \mu , \nu \rangle = -\kappa_{r} \mathrm{tr}(\mu  \cdot \nu),$ for some $\kappa_{r} > 0$, where the trace is the one induced from the identification ${\frak{pu}}(r) \cong \frak{su}(r) \subset \mathrm{End}(\bb{C}^r)$. We leave $\kappa_r$ arbitrary, but fixed.

		\medskip
		
		In \cite{Woody2}, L.M. Woodward exploited the adjoint representation to classify the principal $\PU(r)$-bundles over spaces of dimension $\leq 4$. This classification scheme assigns cohomology classes

		$$t_2(P) \in H^2(X, \bb{Z}_r), \indent q_4(P) \in H^4(X, \bb{Z})$$
		to each principal $\PU(r)$-bundle $P \rightarrow X$. For example, $q_4$ is defined to be the second Chern class of the complexified adjoint bundle $P(\frak{g})_{\bb{C}} \defeq P(\frak{g}) \otimes \bb{C}$, where $\frak{g} = {\frak{pu}}(r)$. The class $t_2$ is defined as the mod $r$ reduction of a suitable first Chern class. We will be mostly interested in the case where $X$ is a smooth manifold, but these classes are defined for CW complexes as well. 
				
\begin{example}
When $r = 2$ we have $\PU(2) = \SO(3)$, and the classes $t_2$ and $q_4$ are the second Stiefel-Whitney class and the (negative of the) first Pontryagin class, respectively. 
\end{example}

				\noindent We summarize the properties of these classes that we will need.

		\begin{itemize}
			\item If $\dim(X) \leq 4$ and $X$ is a manifold, then two bundles $P$ and $P'$ over $X$ are isomorphic if and only if $t_2(P) = t_2(P')$ and $q_4(P)= q_4(P')$;

			\item The class $t_2(P)$ is zero if and only if the structure group of $P$ can be lifted to $\SU(r)$.

		\item The class $t_2(P)$ is the mod $r$ reduction of an integral class if and only if the structure group of $P$ can be lifted to $\U(r)$.
		\end{itemize}

		These characteristic classes can be used to study the components of the gauge group $\G(P)$ for a principal $\PU(r)$-bundle $P \rightarrow X$. We take as a starting point the following observation by Donaldson. We refer the reader to \cite{DunPU} for more details of the assertions in the remainder of this section. 
		
		\begin{proposition}{\cite[Section 2.5.2]{Donfloer}}\label{donfloer}
		Suppose $G$ is a compact Lie group, $X$ is a smooth manifold and $P \rightarrow X$ is a principal $G$-bundle. Then there is a bijection between $\pi_0(\G(P))$ and the set of isomorphism classes of principal $G$-bundles over $S^1 \times X$ that restrict to $P$ on a fiber. This bijection is induced from the map that sends a gauge transformation $\afu$ to the bundle $P_{\afu} \defeq \left[0, 1 \right] \times P / (0, {\afu}(p)) \sim (1, p),$ given by the mapping torus of $\afu$. 
		\end{proposition}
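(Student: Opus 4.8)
The plan is to cut $S^1$ open at a basepoint $s_0$ and to reduce everything to the covering homotopy theorem for principal bundles, which guarantees that a principal $G$-bundle over $[0,1] \times X$ is isomorphic to the pullback of its restriction over $\{0\} \times X$. Granting this, any bundle over $S^1 \times X$ whose restriction over $\{s_0\} \times X$ is identified with $P$ is recovered from $\mathrm{pr}_X^* P \to [0,1] \times X$ by gluing the two ends over an automorphism of $P$, i.e. over an element of $\G(P)$ --- which is exactly the mapping torus construction $\afu \mapsto P_\afu$. I would organize the proof into three steps: the assignment $\afu \mapsto [P_\afu]$ descends to $\pi_0(\G(P))$; it is surjective onto the stated set; and it is injective.

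For the first step, a path $t \mapsto \afv_t$ in $\G(P)$ from $\afu_0$ to $\afu_1$ induces an isomorphism $P_{\afu_0} \xrightarrow{\sim} P_{\afu_1}$ covering the identity of $S^1 \times X$: one descends the $G$-equivariant map $[0,1] \times P \to [0,1] \times P$, $(t, p) \mapsto (t, \afv_{1-t}(\afv_0^{-1}(p)))$, and checks directly that it carries the gluing relation of $P_{\afu_0}$ to that of $P_{\afu_1}$. For surjectivity, fix a bundle $\widetilde{P} \to S^1 \times X$ together with a chosen identification $\widetilde{P}|_{\{s_0\} \times X} \cong P$; pulling $\widetilde{P}$ back along the quotient $[0,1] \to S^1$ and applying the covering homotopy theorem --- normalized so the resulting trivialization in the $S^1$-direction restricts to the chosen identification over $\{0\} \times X$ --- produces a second identification of $\widetilde{P}|_{\{s_0\}\times X}$ with $P$ coming from the $\{1\} \times X$ end, and the discrepancy between the two is a gauge transformation $\afu \in \G(P)$ with $\widetilde{P} \cong P_\afu$. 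For injectivity, an isomorphism $P_{\afu_0} \xrightarrow{\sim} P_{\afu_1}$ covering the identity of $S^1 \times X$ and restricting to the identity over the basepoint fiber lifts, on $[0,1] \times P$, to a path $t \mapsto \psi_t$ in $\G(P)$ with $\psi_0 = \mathrm{id}$ and $\afu_1 = \afu_0 \psi_1^{-1}$; since $\psi_1$ is joined to $\mathrm{id}$ by the path $\psi$ it lies in the normal subgroup $\G_0(P)$, so $[\afu_0] = [\afu_1]$ in $\pi_0(\G(P))$.

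I expect the only genuine input to be the covering homotopy theorem, that a bundle over $[0,1] \times X$ is pulled back from $\{0\} \times X$; since $X$ is a manifold, hence paracompact, this is standard and I would simply cite it. The step that requires real care --- the main obstacle, such as it is --- is the bookkeeping in the surjectivity and injectivity arguments: one must carry the chosen identification over the basepoint fiber $\{s_0\} \times X$ through every construction, since it is precisely this normalization that makes the target of the bijection $\pi_0(\G(P))$ on the nose. (Absent such a normalization, an abstract isomorphism of bundles over $S^1 \times X$ is only seen to conjugate the class of $\afu$, so the right-hand side of the bijection should be read as isomorphism classes of pairs $(\widetilde{P},\ \widetilde{P}|_{\{s_0\}\times X} \cong P)$, with morphisms required to intertwine the identifications.) Everything else is formal manipulation of the mapping torus relation.
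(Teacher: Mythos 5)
Your proof is correct, and since the paper simply cites \cite[Section 2.5.2]{Donfloer} without reproducing an argument, there is no in-text proof to compare against; your covering-homotopy/clutching-function argument is the standard one and is surely what Donaldson has in mind. The caveat you raise at the end is genuine and worth keeping in mind: read literally, ``isomorphism classes of bundles over $S^1\times X$ restricting to $P$'' corresponds to conjugacy classes in $\pi_0(\G(P))$, and the on-the-nose bijection with $\pi_0(\G(P))$ requires remembering a marking $\widetilde P|_{\{s_0\}\times X}\cong P$, exactly as you say. For the uses made of this proposition in the paper the distinction is harmless: first, the only thing ever extracted from $P_\afu$ is a pair of characteristic classes, and these depend only on the isomorphism class of $P_\afu$ (hence are automatically conjugation-invariant functions on $\pi_0(\G(P))$); second, in the paper's setting $G=\PU(r)$ is connected and $\dim X\leq 3$, so the pair $(\eta,\deg)$ embeds $\pi_0(\G(P))$ into the abelian group $H^1(X,\bb Z_r)\oplus H^3(X,\bb Z)$ and conjugation in $\pi_0(\G(P))$ is trivial in any case.
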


		Now we combine this with L.M. Woodward's classification. Fix a principal $\PU(r)$-bundle $P \rightarrow X$. To focus the discussion, we assume $X$ is a manifold of dimension at most $3$. Let ${\afu} \in \G(P)$, define $P_{\afu} \rightarrow S^1 \times X$ as in the proposition above, and consider the classes $t_2(P_{\afu}) \in H^2(S^1 \times X, \bb{Z}_r), q_4(P_{\afu}) \in H^4(S^1 \times X, \bb{Z})$. By the K\"{u}nneth formula, we have an isomorphism
		
		$$H^k(S^1 \times X, R) \cong H^k(X, R) \oplus H^{k-1}(X, R),$$%%%%Hatcher says this holds for cohomology when one space has freely generated R-cohomology
		where $R = \bb{Z}$ or $\bb{Z}_r$. The image of $t_2(P_{\afu}) \in H^2(X, \bb{Z}_r) \oplus H^1(X, \bb{Z}_r)$ in the first factor is exactly $t_2(P)$, so the dependence of $t_2(P_{\afu})$ on the gauge transformation ${\afu}$ is contained entirely in the projection of $t_2(P_{\afu})$ to the second factor $H^1(X, \bb{Z}_r)$. We denote this projection by 
		
		$$\eta({\afu}) \defeq \eta_X({\afu}) \in H^1(X, \bb{Z}_r),$$
		and call this the {\bfseries parity} of ${\afu}$. 
		
		In a similar fashion, we will define the \emph{degree} of a gauge transformation using $q_4$. Before defining this explicitly, we note that a straight-forward characteristic class argument shows that the image of $q_4(P_{\afu})$ in $H^3(X, \bb{Z})$ is always even. Then we define the {\bfseries degree} of ${\afu}$ to be the class 
		
		$$\deg({\afu})  \defeq \frac{1}{2} \proj_{H^3} \:q_4(P_{\afu}) \in H^3(X, \bb{Z}).$$ 
		where $\proj_{H^3}: H^4(S^1 \times X, \bb{Z}) \rightarrow H^3(X, \bb{Z})$ is given by contraction with the generator of $H^1(S^1, \bb{Z})$. See \cite{DS2} and \cite{Flinst} for alternative realizations of the parity and degree.

			Proposition \ref{donfloer}, and the properties of $t_2$ and $q_4$ immediately imply that the parity and degree detect the components of the gauge group when $\dim(X) \leq 3$. The next proposition describes some information captured by the parity. It is standard when $r=2$ and the extension to larger $r$ is no different.

		\begin{proposition}\label{trivgaugetrans}
				Fix a gauge transformation ${\afu}: P \rightarrow \PU(r)$. Then the following are equivalent:

				\begin{itemize}
					\item $\eta({\afu}) = 0$;
					\item ${\afu}: P \rightarrow \PU(r)$ lifts to a $\PU(r)$-equivariant map $\widetilde{{\afu}}: P \rightarrow \SU(r)$;
					\item When restricted to the 1-skeleton of $X$, ${\afu}$ is homotopic to the identity map.
				\end{itemize}
				If $X$ is a compact, connected, oriented 3-manifold and $\eta({\afu}) = 0$, then $\deg({\afu})$ is divisible by $r$.
		\end{proposition}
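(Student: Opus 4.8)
The plan is to reduce the statement to a Chern-class computation on $S^1 \times X$, after observing that the hypotheses force the mapping-torus bundle $P_{\afu} \to S^1 \times X$ to admit a $\U(r)$-lift of its structure group. First I would produce that lift. Since $X$ is a compact, connected, oriented $3$-manifold, $H^3(X,\bb{Z})$ is torsion free, so multiplication by $r$ is injective on it, and the Bockstein exact sequence of $0 \to \bb{Z} \xrightarrow{r} \bb{Z} \to \bb{Z}_r \to 0$ makes the reduction $H^2(X,\bb{Z}) \to H^2(X,\bb{Z}_r)$ surjective. Thus $t_2(P) = c \bmod r$ for some $c \in H^2(X,\bb{Z})$. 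By the definition of the parity, $\eta(\afu) = 0$ means precisely that $t_2(P_{\afu})$ is the pullback $\mathrm{pr}_X^* t_2(P) = (\mathrm{pr}_X^* c) \bmod r$ under the projection $\mathrm{pr}_X \colon S^1 \times X \to X$; so by the recalled properties of $t_2$, the structure group of $P_{\afu}$ lifts to $\U(r)$. Fix a rank-$r$ Hermitian vector bundle $E \to S^1 \times X$ realizing this lift, so that $P_{\afu}(\frak{pu}(r)) \cong \frak{su}(E)$ and $c_1(E) \equiv t_2(P_{\afu}) \pmod r$.

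Next, since $q_4(P_{\afu}) = c_2\bigl(P_{\afu}(\frak{pu}(r))_{\bb{C}}\bigr)$ by definition and $\End(E) \cong \frak{su}(E)\otimes\bb{C} \,\oplus\, \underline{\bb{C}}$, I get $q_4(P_{\afu}) = c_2(\End E)$, and the splitting principle yields the identity $c_2(\End E) = 2r\,c_2(E) - (r-1)\,c_1(E)^2$. Writing $c_1(E) = a + dt\cdot b$ under the K\"unneth isomorphism $H^2(S^1\times X,\bb{Z}) \cong H^2(X,\bb{Z}) \oplus H^1(X,\bb{Z})$ (with $dt$ the generator of $H^1(S^1,\bb{Z})$, $a \in H^2(X,\bb{Z})$, $b \in H^1(X,\bb{Z})$), and using $H^4(X,\bb{Z}) = 0$ together with $(dt)^2 = 0$, the $H^3(X,\bb{Z})$-component of $c_1(E)^2$ is $2ab$. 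Hence
\[
\proj_{H^3} q_4(P_{\afu}) = 2r\,\proj_{H^3} c_2(E) - 2(r-1)\,ab ,
\]
so $\deg(\afu) = \frac{1}{2}\,\proj_{H^3} q_4(P_{\afu}) \equiv -(r-1)\,ab \equiv ab \pmod r$.

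To finish, note that $b$ is the $H^1(X,\bb{Z})$-component of $c_1(E)$, hence reduces mod $r$ to the $H^1(X,\bb{Z}_r)$-component of $t_2(P_{\afu})$, which is $\eta(\afu) = 0$; exactness of the Bockstein sequence gives $b = r b'$ for some $b' \in H^1(X,\bb{Z})$, so $ab = r(ab') \in r\,H^3(X,\bb{Z})$ and therefore $\deg(\afu) \equiv 0 \pmod r$. I expect the first step to be the crux: it is exactly there that the hypothesis ``$X$ is a $3$-manifold'' enters (via vanishing of the relevant Bockstein on $H^3(X,\bb{Z})$), and without the $\U(r)$-lift of $P_{\afu}$ there is no vector bundle with which to run the computation. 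The remaining work — the splitting-principle identity, the K\"unneth bookkeeping, and verifying that the definitions of $t_2$ and $q_4$ recalled above are compatible with $c_1(E)\bmod r$ and with $c_2(\End E)$ (matching the normalization $q_4 = -p_1$ when $r=2$) — is routine.
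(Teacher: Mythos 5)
Your argument only addresses the last sentence of the proposition (the divisibility of $\deg(\afu)$ by $r$) and never touches the main content: the equivalence of the three bulleted conditions. That equivalence holds for arbitrary $X$ (the paper's hypothesis there is just that $X$ is a smooth manifold, used earlier to define $\eta$); the $3$-manifold hypothesis enters only for the degree statement. The missing piece is an obstruction-theoretic argument: since $\SU(r) \to \PU(r)$ is a covering with deck group $\bb{Z}_r$, the obstruction to a $\PU(r)$-equivariant lift $\widetilde{\afu}: P \to \SU(r)$ is a class in $H^1(X, \bb{Z}_r)$, and one must identify this class with $\eta(\afu)$ (e.g.\ by noting that both detect the homotopy class of $\afu$ restricted to the $1$-skeleton, where the adjoint bundle trivializes and $\afu$ becomes a map to $\PU(r)$ classified by $\pi_1(\PU(r)) = \bb{Z}_r$). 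Without this, the proposition is not proved, and in fact your degree computation implicitly presupposes precisely this identification of $\eta$ with the $H^1(X,\bb{Z}_r)$-part of $t_2(P_{\afu})$ — which is by definition here, but whose meaning as an obstruction is what items two and three articulate.

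The degree computation itself is correct and is the standard Chern-class argument (and matches what the cited reference does for $r=2$): the Bockstein/Poincar\'e-duality argument that forces $t_2(P)$, and hence $t_2(P_{\afu})$, to be integral mod $r$; the splitting-principle identity $c_2(\End E) = 2r\,c_2(E) - (r-1)c_1(E)^2$; and the K\"unneth bookkeeping. One small point worth making explicit: you use that the reduction map $H^1(X,\bb{Z}) \to H^1(X,\bb{Z}_r)$ has kernel $r H^1(X,\bb{Z})$, which holds by the long exact Bockstein sequence without any hypothesis on $X$, so the conclusion $b \in rH^1(X,\bb{Z})$ is clean. But you should complete the proposition by supplying the obstruction-theory step for the three equivalences before presenting the Chern computation as the whole proof.
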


		Next we give a formula for computing the degree. Let $Q \rightarrow Y$ be a $\PU(r)$-bundle over a closed, connected, oriented 3-manifold $Y$, and let $\afu \in \G(Q)$. Fix a connection ${\afa}_0 \in \A(Q)$, and let ${\afa}: I \rightarrow \A(Q)$ be any path from $\afa_0$ to $\afu^* \afa_0$. This defines a connection ${\afA}$ on $I \times Q \rightarrow I \times Y$ by declaring ${\afA} \vert_{\left\{s \right\} \times Y } = \afa(s)$ for $s \in I$. Moreover, ${\afA}$ descends to a connection on the mapping torus $Q_{\afu}$. The usual Chern-Weil formula for the second Chern class of $\SU(r)$-bundles gives $q_4(Q_u) =  -r(4\pi^2 \kappa_r)^{-1}\int_{I \times Y} \langle F_{\afA} \wedge F_{\afA} \rangle.$ The curvature decomposes into components as $F_{\afA} = F_{\afa(s)}  + ds \wedge \partial_s \afa(s)$, and so
		
		\begin{equation}
		\deg(\afu) = \fracd{r}{4\pi^2 \kappa_r}\intd{I} \left( \intd{Y} \langle F_{\afa(s)} \wedge \partial_s \afa(s) \rangle \right). 
		\label{constantthings}
		\end{equation}
		(Our orientation convention is that $ds \wedge d\mathrm{vol}_Y$ is a positive volume form on $I \times Y$.) This formula also shows that the degree operator is a homomorphism of groups.
		
		\medskip

		We end the discussion of the degree and parity by mentioning two applications. These will be useful in our treatment of instanton Floer theory later. 
		
		\medskip

\noindent \emph{Application 1: Free actions on flat connections.} We suppose $X$ is a connected and oriented manifold of dimension 2 or 3, equipped with a principal $\PU(r)$-bundle $P \rightarrow X$. In case $X$ is 2-dimensional, we assume that $t_2(P)\left[ X \right] \in \bb{Z}_r$ is a generator. If $X$ is 3-dimensional then we assume there is an embedding $\Sigma \hookrightarrow X$ of a connected, oriented surface such that $t_2(P) \left[ \Sigma \right] \in \bb{Z}_r$ is a generator.

		\begin{lemma}\label{gaugestab0} \cite[Lemma 2.5]{DS2}
			Suppose $P \rightarrow X$ is as above. Then all of the flat connections on $P$ are irreducible. In fact, any flat connection $\afa$ on $P$ has trivial stabilizer in $\ker \eta$:
			$$\left\{\left. {\afu} \in \G(P) \: \right|\: \eta(\afu) = 0,\indent {\afu}^*{\afa} = {\afa} \:  \right\} = \left\{e \right\}.$$
			\end{lemma}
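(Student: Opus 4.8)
The plan is to reduce the statement to finite-dimensional group theory via the holonomy representation, and then to a single computation of the parity. First I would fix a flat connection $\afa$ on $P$ and, using that $X$ is connected, identify $P$ with the flat bundle $\widetilde{X} \times_\rho \PU(r)$ for the holonomy $\rho \colon \pi_1(X) \to \PU(r)$; set $\Gamma \defeq \rho(\pi_1 X) \subseteq \PU(r)$ and let $\widetilde{\Gamma} \subseteq \SU(r)$ be its full preimage under $\SU(r) \to \PU(r)$. Identifying gauge transformations that fix $\afa$ with covariantly constant sections of the conjugation bundle gives a group isomorphism $\{ \afu \in \G(P) \mid \afu^* \afa = \afa \} \cong Z_{\PU(r)}(\Gamma)$ carrying $e$ to $e$; and since $\mathrm{Ad}(g)$ on $\frak{pu}(r) = \frak{su}(r)$ is conjugation by any lift of $g$, one gets $H^0_\afa \cong \{ \xi \in \frak{su}(r) \mid \hat{g}\, \xi\, \hat{g}^{-1} = \xi \ \forall\, \hat{g} \in \widetilde{\Gamma} \}$. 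Let $\Sigma$ denote $X$ itself if $\dim X = 2$ and the given embedded surface if $\dim X = 3$, and let $\widetilde{\Gamma}_\Sigma \subseteq \SU(r)$ be the preimage of $\rho(\iota_* \pi_1 \Sigma) \subseteq \Gamma$, so $\widetilde{\Gamma}_\Sigma \subseteq \widetilde{\Gamma}$.

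The key point I would establish is that $\widetilde{\Gamma}_\Sigma$ acts irreducibly on $\bb{C}^r$. Suppose instead $\bb{C}^r = W_1 \oplus W_2$ is an orthogonal splitting into nonzero $\widetilde{\Gamma}_\Sigma$-invariant subspaces, with $r_j \defeq \dim W_j \in \{1, \dots, r-1\}$. The hypothesis that $t_2(P|_\Sigma)[\Sigma]$ generates $\bb{Z}_r$ forces $\Sigma$ to have positive genus $g$ (genus $0$ would make $\rho|_{\pi_1 \Sigma}$ trivial and $t_2(P|_\Sigma)$ zero). The standard computation of $t_2$ over a surface then identifies $t_2(P|_\Sigma)[\Sigma]$ with $\zeta \defeq \prod_{i=1}^g [\widetilde{A}_i, \widetilde{B}_i] \in \ker(\SU(r) \to \PU(r)) = Z(\SU(r)) \cong \bb{Z}_r$, where $\widetilde{A}_i, \widetilde{B}_i \in \widetilde{\Gamma}_\Sigma$ are any lifts of the $\rho$-images of a standard generating set $a_i, b_i$ of $\pi_1 \Sigma$. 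Since the $\widetilde{A}_i, \widetilde{B}_i$ preserve the splitting, so does $\zeta$, and being central $\zeta = \omega \cdot \mathrm{Id}$ for an $r$-th root of unity $\omega$; restricting $\zeta = \prod_i [\widetilde{A}_i, \widetilde{B}_i]$ to $W_1$ shows $\omega\, \mathrm{Id}_{W_1}$ is a product of commutators in $\U(W_1)$ and hence has determinant $1$, so $\omega^{r_1} = 1$ with $r_1 \le r-1$. Then $\omega$ has order less than $r$, so $\zeta$ is not a generator of $\bb{Z}_r$ --- a contradiction. (I would cite \cite{Woody2}, or \cite{DS2} for $r = 2$, for the surface formula for $t_2$ rather than reprove it.)

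Granting this, Schur's lemma gives $Z_{\SU(r)}(\widetilde{\Gamma}_\Sigma) = Z(\SU(r)) = \bb{Z}_r$ and $\{ \xi \in \End(\bb{C}^r) \mid \hat{g}\, \xi\, \hat{g}^{-1} = \xi\ \forall\, \hat{g} \in \widetilde{\Gamma}_\Sigma \} = \bb{C}\cdot\mathrm{Id}$, which meets $\frak{su}(r)$ only in $0$; since $\widetilde{\Gamma}_\Sigma \subseteq \widetilde{\Gamma}$ this immediately yields $H^0_\afa = 0$, i.e.\ every flat connection on $P$ is irreducible. For the sharper claim I would take $\afu$ fixing $\afa$ with $\eta(\afu) = 0$, corresponding to $g \in Z_{\PU(r)}(\Gamma)$ with a lift $\hat{g} \in \SU(r)$, and prove a parity formula: for $\gamma \in \pi_1(X)$ represented by a loop $\ell$, the bundle $\ell^* P$ over $S^1$ is trivial, $\afu|_\ell$ becomes a based loop in $\PU(r)$, and unwinding the definition of $\eta(\afu)$ through the K\"unneth decomposition of $t_2(P_\afu)$ --- using the mapping torus $P_\afu$ of Proposition \ref{donfloer} restricted to the torus $S^1 \times \ell$ --- identifies $\langle \eta(\afu), \gamma \rangle \in \bb{Z}_r \cong \pi_1(\PU(r))$ with the unique $\zeta_\gamma \in Z(\SU(r))$ satisfying $\widehat{\rho(\gamma)}^{-1} \hat{g}\, \widehat{\rho(\gamma)} = \zeta_\gamma\, \hat{g}$ (well defined since $g$ commutes with $\rho(\gamma)$ in $\PU(r)$, and independent of the chosen lifts). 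Hence $\eta(\afu) = 0$ forces $\hat{g}$ to commute with a lift of every element of $\Gamma$, i.e.\ $\hat{g} \in Z_{\SU(r)}(\widetilde{\Gamma}) \subseteq Z_{\SU(r)}(\widetilde{\Gamma}_\Sigma) = Z(\SU(r))$, so $g = e$ and $\afu = e$.

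The hard part will be the parity formula: carefully matching the topological definition of $\eta(\afu)$ (built from $t_2$ of the mapping torus via K\"unneth) with the algebraic ``commutes up to the center of $\SU(r)$'' condition on $\hat g$, while keeping all orientation and sign conventions --- and the identification $\pi_1(\PU(r)) \cong \bb{Z}_r$ --- consistent. This step is also where the hypothesis is used a second time, and it cannot be bypassed: for $\PU(2) = \SO(3)$ the Klein four-group $V_4 \subset \SO(3)$ (image of the quaternion group $Q_8 \subset \SU(2)$) is its own centralizer, so a flat connection with holonomy $V_4$ can have nontrivial stabilizer even when $t_2$ is a generator --- the resolution being precisely that those extra stabilizer elements have nonzero parity, by the formula above.
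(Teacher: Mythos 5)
The paper does not actually supply a proof of this lemma---it cites \cite[Lemma~2.5]{DS2}, which treats the case $r=2$. What you have written is, in substance, the Dostoglou--Salamon argument carried over to general $r$: identify the stabilizer with the centralizer of the holonomy image $\Gamma$, lift to $\SU(r)$, use the commutator--determinant computation to show that a reducible $\widetilde{\Gamma}_\Sigma$ forces $\prod_i[\widetilde A_i,\widetilde B_i]$ to be a non-generator of the center, and then apply Schur's lemma. All of this is correct, including the observation that genus zero is excluded, and the $V_4\subset\SO(3)$ example at the end is exactly the right sanity check showing why the parity hypothesis is not removable.

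The one place where you are doing more work than needed is the ``parity formula.'' You propose to rederive, from the K\"unneth decomposition of $t_2(P_\afu)$ on $S^1\times\ell$, the identity $\langle\eta(\afu),\gamma\rangle=\zeta_\gamma$ with $\widehat{\rho(\gamma)}^{-1}\hat g\,\widehat{\rho(\gamma)}=\zeta_\gamma\,\hat g$, and you correctly flag the orientation/sign bookkeeping as the delicate part. But the paper already states Proposition~\ref{trivgaugetrans}, which gives $\eta(\afu)=0$ if and only if $\afu\colon P\to\PU(r)$ admits an equivariant lift $\widetilde\afu\colon P\to\SU(r)$. Since $\afu$ is covariantly constant with respect to $\afa$ and the covering $\SU(r)\to\PU(r)$ intertwines the covariant derivatives on the two conjugation bundles, $\widetilde\afu$ is covariantly constant as well; its value $\hat g$ at the basepoint is therefore fixed by the holonomy $\mathrm{Ad}\circ\rho$, which is exactly the statement $\hat g\in Z_{\SU(r)}\big(\widetilde\Gamma\big)\subseteq Z_{\SU(r)}\big(\widetilde\Gamma_\Sigma\big)=Z(\SU(r))$, whence $g=e$. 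This reaches the same conclusion without ever unwinding the mapping-torus definition of $\eta$, and it sidesteps the sign conventions entirely. I would replace the parity-formula paragraph with this two-line argument and keep the rest as is.
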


	\medskip 

	\noindent \emph{Application 2: Existence of degree $d$ gauge transformations.} Fix a closed, connected, oriented 3-manifold $X$ and a principal $\PU(r)$-bundle $P \rightarrow X$. We assume that $t_2(P)$ is the reduction of an integral class. This implies $P = \overline{P} \times_{\U(r)} \PU(r)$ is induced from a principal $\U(r)$-bundle $\overline{P} \rightarrow X$, and $t_2(P) = c_1(\overline{P}) $ mod $r$. Let $ d\in \bb{Z}$, and assume that $c_1(\overline{P})\left[ \Sigma \right] = d$ for some embedded connected, oriented surface $\Sigma \hookrightarrow X$. (In particular, $P$ satisfies the conditions of Application 1 when $d,r$ are relatively prime.) 
	
	\begin{proposition}\label{degprop}
			Let $d \in \bb{Z}$, $P \rightarrow X$ and $\Sigma \subset X$ be as above. 

			\medskip			
			
			(a) There exists a gauge transformation $\afu \in \G(P)$ of degree $d$. Moreover, the parity $\eta(\afu) :H_1(X) \rightarrow \bb{Z}_r$ is given by the mod $r$ reduction of the Poincar\'{e} dual $\textrm{PD}_X\left[\Sigma \right] : H_1(X) \rightarrow \bb{Z}$. 
			
			\medskip
			
			(b) Suppose $d$ and $r$ are relatively prime. Then there exists a gauge transformation $\afu \in \G(P)$ of degree 1. Moreover, writing $md + nr =1$, the parity $\eta(\afu) :H_1(X) \rightarrow \bb{Z}_r$ is given by the mod $r$ reduction of $m$ times the Poincar\'{e} dual $\textrm{PD}_X\left[\Sigma \right] : H_1(X) \rightarrow \bb{Z}$. 
	\end{proposition}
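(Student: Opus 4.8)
The plan is to construct $\afu$ explicitly by a local-to-global gluing argument, using the hypothesis that $P$ is induced from a $\U(r)$-bundle $\overline{P}$ with $c_1(\overline{P})[\Sigma] = d$. The key geometric input is that a $\U(1)$-gauge transformation on a line bundle over $\Sigma$ with prescribed winding lives over a tubular neighborhood $\Sigma \times (-1,1)$ of $\Sigma$, and that such a transformation can be made to equal the identity near the boundary of the neighborhood so that it extends by the identity to all of $X$. Concretely, pick a tubular neighborhood $\nu(\Sigma) \cong \Sigma \times (-1,1)$, with $\Sigma = \Sigma \times \{0\}$. Over $\Sigma$ choose a complex line bundle $\La \to \Sigma$ of degree $d$ and a section $s$ of $\La$ vanishing transversally at $d$ points (counted with sign); away from the zeros, $s/|s|$ defines a map to $U(1)$, and one builds a gauge transformation of $\overline{P}$ supported in $\nu(\Sigma)$ whose restriction to $\Sigma$ represents the class $d \in \bb{Z} \cong [\Sigma, \U(1)]$ measured against the $\U(r)$-structure, and which is the identity outside $\Sigma \times [-1/2, 1/2]$. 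Projecting to $\PU(r)$ gives the candidate $\afu \in \G(P)$.

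Next I would compute $\deg(\afu)$ using the Chern–Weil formula (\ref{constantthings}). Choosing the path $\afa(s)$ from $\afa_0$ to $\afu^*\afa_0$ to be the straight line, and $\afa_0$ a connection that is a product connection on $\nu(\Sigma)$, the integrand $\langle F_{\afa(s)} \wedge \partial_s \afa(s)\rangle$ localizes to $\nu(\Sigma)$, and the integral reduces to a two-dimensional computation on $\Sigma$ (times the trivial $(-1,1)$ and $I$ directions), namely an integral over $\Sigma$ of the curvature of the $\U(1)$-part, which by construction equals $d$. Tracking the normalization constant $r(4\pi^2\kappa_r)^{-1}$ and the factor $r$ converting $\U(r)$-data to the $\PU(r)$-trace, one gets $\deg(\afu) = d$ exactly; this is essentially the computation that identifies the degree with $c_1(\overline{P})[\Sigma]$ of the glued-up $\U(r)$-bundle on $S^1 \times X$. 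For the parity statement, I would compute $\eta(\afu) = \eta_X(\afu) \in H^1(X, \bb{Z}_r) = \Hom(H_1(X), \bb{Z}_r)$ by evaluating on a loop $\gamma$: by Proposition \ref{donfloer} and the definition of $t_2$, $\eta(\afu)(\gamma)$ is read off from $t_2(P_{\afu})$ restricted to $S^1 \times \gamma$, which by the mapping-torus description counts the winding of $\afu\vert_\gamma: \gamma \to \PU(r)$ lifted appropriately; since $\afu$ is supported near $\Sigma$ and winds once transversally across $\Sigma$, this winding is exactly the intersection number $\gamma \cdot \Sigma$ reduced mod $r$. Part (b) then follows formally: given $md + nr = 1$, take the gauge transformation $\afu^m \cdot \afv^n$ where $\afv$ has degree $r$ and trivial parity (such $\afv$ exists, e.g. as the $r$-th power of a lift, using the last sentence of Proposition \ref{trivgaugetrans}, or directly by the same construction with a degree-$r$ line bundle), and use that degree is a homomorphism (noted after (\ref{constantthings})) and parity is a homomorphism; then $\deg(\afu^m\afv^n) = md + nr = 1$ and $\eta(\afu^m\afv^n) = m(\gamma \cdot \Sigma) \bmod r$.

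The main obstacle I anticipate is making the support and extension argument rigorous at the level of $\PU(r)$- rather than $\U(r)$-gauge transformations: a priori a $\PU(r)$-gauge transformation need not lift to a $\U(r)$-one globally, but here we only need the one we constructed, which is built from $\U(r)$-data by construction, so the lift exists on $\nu(\Sigma)$ and the issue is only whether the identity extension is consistent — and it is, since $\afu$ is literally the identity off a compact piece of $\nu(\Sigma)$. The second delicate point is bookkeeping the constants in (\ref{constantthings}): one must make sure that the $\langle\cdot,\cdot\rangle = -\kappa_r \mathrm{tr}(\cdot\,\cdot)$ normalization, the factor of $r$ relating $\frak{su}(r)$- and $\frak{pu}(r)$-traces implicit in the definition of $q_4$, and the factor of $\frac12$ in the definition of $\deg$ all combine to give precisely $d$ and not $d/r$ or $rd$; the sign/consistency check is exactly the content of Proposition \ref{trivgaugetrans}'s last sentence (degree divisible by $r$ when parity vanishes), which provides a useful consistency test for the normalization. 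Everything else — transversality of $s$, smoothness of $s/|s|$ away from zeros, the straight-line path computation — is routine.
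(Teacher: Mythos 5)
The paper does not actually prove Proposition \ref{degprop} in this text; it immediately follows the statement with a remark pointing to \cite{DS2} (explicit construction, $r = 2$) and to \cite{DunPU} (a proof via the characteristic classes $t_2$ and $q_4$, any $r$). Your proposal attempts an explicit construction, so it is in the spirit of the Dostoglou--Salamon route rather than the characteristic-class route the paper actually cites. That is a legitimate alternative strategy, and the general shape of your argument (localize near $\Sigma$, compute degree by Chern--Weil via (\ref{constantthings}), identify the parity with an intersection number by evaluating $t_2(P_{\afu})$ on $S^1 \times \gamma$, then build (b) from (a) by using that $\deg$ and $\eta$ are homomorphisms) is the right skeleton.

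However, the heart of the construction as written has a real gap, and I think it would fail as stated. You build a ``$\U(1)$-gauge transformation'' of $\overline{P}$ out of $s/|s|$ for a section $s$ of a line bundle $\La \to \Sigma$, and then project to $\PU(r)$. But there are two problems. First, $s/|s|$ is a unit-length section of the circle bundle of $\La$, not a map to $\U(1)$; since $\deg \La = d \neq 0$, $\La$ is nontrivial and there is no global trivialization over $\Sigma$ in which $s/|s|$ becomes a $\U(1)$-valued function, and the identification $[\Sigma, \U(1)] \cong \bb{Z}$ you invoke is false (it is $H^1(\Sigma, \bb{Z}) \cong \bb{Z}^{2g(\Sigma)}$). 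Second and more importantly, you never specify which $\U(1) \subset \U(r)$ the gauge transformation is valued in. If it is the center $Z(\U(r)) = \U(1)$, which is the only copy that gives a well-defined gauge transformation of $\overline{P}$ without choosing a reduction of structure group, then this $\U(1)$ is precisely the kernel of $\U(r) \to \PU(r)$, and the resulting $\afu \in \G(P)$ is the \emph{identity}, so $\deg(\afu) = 0$ and $\eta(\afu) = 0$. To get something nontrivial you need a genuinely non-central construction: e.g., choose a line sub-bundle $\La \subset \overline{P}(\bb{C}^r)\vert_\Sigma$ (possible over a surface, by obstruction theory) to reduce the structure group to $\U(1) \times \U(r-1)$ over $\Sigma$, and then wind the $\U(1)$-factor nontrivially across the transverse $(-1,1)$-direction of $\nu(\Sigma)$; and even then, one must carefully check that the degree comes out to $d$ rather than merely $d \bmod r$, since the realizability constraint ($q_4$ versus the Pontryagin square of $t_2$) on $S^1 \times X$ only fixes $\deg(\afu)$ modulo $r$ once $\eta(\afu) = \mathrm{PD}[\Sigma]$. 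Your choice $\deg \La = d$ is plausibly what makes the arithmetic close, but the proposal asserts this rather than verifying it, and the Chern--Weil computation you sketch is precisely where the central-versus-non-central distinction and the degree of $\La$ both enter. Part (b) is fine given (a), but your justification of the auxiliary $\afv$ with $\deg(\afv) = r$, $\eta(\afv) = 0$ is also loose: ``$r$-th power of a lift'' of $\afu$ would give degree $rd$, not $r$; what you want is, e.g., a gauge transformation factoring through a degree-one map $X \to \SU(r) \to \PU(r)$, which is the standard construction.
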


In the case $r = 2$, Dostoglou and Salamon \cite[Lemma 2.3, Lemma A.2]{DS2} prove this by explicitly constructing the desired gauge transformation. In \cite{DunPU}, we present a proof for general $r \geq 2$ using the properties of the characteristic classes $t_2, q_4$.

  \subsection{Compatible bundles for broken circle fibrations}\label{CompatibleBundlesForBrokenCircleFibrations}
  
Fix an integer $r \geq 2$ and $d \in \bb{Z}_r$. Let $f: Y \rightarrow S^1$ be a broken circle fibration. Suppose we are given a map $\gamma: S^1 \rightarrow Y$ that is a section of $f$ (these always exist). Then by the classification of principal $\PU(r)$-bundles on 3-manifolds, there is a unique, up to bundle isomorphism, principal $\PU(r)$-bundle $Q= Q(\gamma)  \longrightarrow Y$ such that $t_2(Q) \in H^2(Y, \bb{Z}_r)$ is Poincar\'{e} dual to the class $d \left[ \gamma \right] \in H_1(Y, \bb{Z}_r)$. We will say that a bundle $Q$ is {\bfseries $d$-compatible} with $f: Y\rightarrow S^1$ if it can be obtained in this way. Since $\left[ \gamma \right]$ is the reduction of an integral class, it follows that $t_2(Q)$ is as well. Moreover, $t_2(Q) \left[ \Sigma_i\right] = d$ for all $i$.

			In general, there may be multiple isomorphism classes of bundles that are $d$-compatible with $f$. For example, suppose $\gamma$ is a section of $f$, and $\alpha : S^1 \rightarrow \Sigma_0$ is a loop in the fiber $\Sigma_0$. Then the concatenation $\gamma *\alpha$ is homologous in $H_1(Y)$ to a section $\gamma'$ of $f$. The associated bundles $Q(\gamma)$ and $Q(\gamma')$ will be isomorphic if and only if $\gamma$ and $\gamma'$ induce the same element of $H_1(Y, \bb{Z}_r)$.

Recall the decomposition (\ref{decopmofYbullet}) of a broken circle fibration $Y \rightarrow S^1$ into elementary cobordisms $I \times \Sigma_j$ and $Y_{j(j+1)}$. Given a $d$-compatible bundle $Q \rightarrow Y$, we set

$$Q_\bullet \defeq \bigcup_i Q_{i(i+1)}, \indent Q_{i(i+1)} \defeq Q\vert_{Y_{i(i+1)}}, \indent \indent \indent P_\bullet \defeq \bigcup_i P_i, \indent P_i \defeq Q \vert_{\Sigma_i}.$$
Then $Q_\bullet$ and $P_\bullet$ are naturally bundles over $Y_\bullet$ and $\Sigma_\bullet$. Let $F_\epsilon: Y^1 \rightarrow Y^\epsilon$ be the diffeomorphism (\ref{milnorsdiffeo}), and define $Q^\epsilon \defeq (F_\epsilon^{-1})^*Q.$ This is a smooth $\PU(r)$-bundle over $Y^\epsilon$ enjoying the same topological properties as $Q \rightarrow Y$. Since $F_\epsilon$ is the identity on $Y_\bullet$ and $\left\{0 \right\} \times \Sigma_\bullet$ it follows that the restrictions $Q_\bullet = \left.Q^\epsilon \right|_{Y_\bullet}$ and $P_\bullet = \left. Q^\epsilon \right|_{\left\{0\right\} \times \Sigma_\bullet}$ do not depend on $\epsilon$.

		We will use a subscript $\eta_Y$ (resp. $\eta_{\Sigma_i}$) to denote the parity operator for $Q \rightarrow Y$ (resp. $P_i \rightarrow \Sigma_i$). Let $\afu \in \G(Q)$, and notice that $\eta_{\Sigma_i}\left(\afu\vert_{\Sigma_i} \right) = 0 $ for some $i$ if and only if $\eta_{\Sigma_i}\left(\afu\vert_{\Sigma_i} \right) $ for all $i$. We will be particularly interested in the following subgroup
		$$\G_{\Sigma_\bullet} \defeq \left\{ \afu \in \G(Q) \: \vert \: \eta_{\Sigma_0}\left(\afu\vert_{\Sigma_0} \right) = 0 \right\}.$$
		There is a sequence of inclusions
		$$\G_0(Q) \subset \ker \eta_Y \subset \G_{\Sigma_\bullet} \subset \G(Q).$$
		Each of these inclusions is strict. For example, the first is strict since any gauge transformation in $\ker \eta_Y$ lifts to an $\SU(r)$-gauge transformation, and there are certainly $\SU(r)$-gauge transformations with non-trivial degree. That the second inclusion is strict follows from Proposition \ref{degprop} (a) and the fact that the restriction of $\eta_Y(u)$ to $H^1(\Sigma_0) \subset H^1(Y)$ is exactly $\eta_{\Sigma_0}(u \vert_{\Sigma_0})$. 
		
		The following is the key result regarding $\G_{\Sigma_\bullet}$.

		\begin{proposition}\label{GSigma} 
			Suppose $d \in \bb{Z}_r$ is a generator and $Q \rightarrow Y$ is $d$-compatible with a broken circle fibration $f: Y \rightarrow S^1$. Then the degree descends to an isomorphism $\G_{\Sigma_\bullet} / \G_0(Q) \cong \bb{Z}$. 
		\end{proposition}  
 			 
  See \cite[Proposition 4.8]{DunPU} for a proof. It follows that the positive generator of $\G_{\Sigma_\bullet} / \G_0(Q)  \cong \bb{Z}$ is the homotopy class of the degree 1 gauge transformation from Proposition \ref{degprop} (b). Moreover, by Proposition \ref{trivgaugetrans}, the image of $\ker \eta_Y$ in $\G_{\Sigma_\bullet} / \G_0(Q) $ is identified with the multiples of $r$:
  $$\ker \eta_Y / \G_0(Q) \cong r \bb{Z} \subset \bb{Z}.$$

\subsection{Quilted Floer theory}\label{QuiltedFloerCohomology}

In this section we describe how to assign a quilted Floer cohomology group to a $d$-compatible bundle $Q$ over a broken circle fibration $f: Y \rightarrow S^1$. We begin by introducing the moduli spaces of flat connections on surfaces and cobordisms. These bring symplectic geometry into the picture.

			\subsubsection{Moduli spaces of flat connections}\label{ModuliSpacesOfFlatConnections}

 Fix a principal $\PU(r)$-bundle $P \rightarrow X$, and we assume $\dim(X) \leq 3$. For $2 \leq q < \infty$ and $k \geq 1$, define
			
			$$\M(P) \defeq \A_{\fl}^{k,q} (P) / \G_0^{k+1,q}(P),$$			
			where $\G_0^{k+1,q}(P)	\subseteq \G^{k+1,q}(P)$ is the identity component of the gauge group, and let $\Pi: \A^{1,q}_\fl (P) \longrightarrow \M(P)$ be the quotient map. Then $\Pi$ and $\M(P)$ are independent of the choice of $2 \leq q < \infty$ and $k \geq 1$ in the sense that the natural inclusion $\A^{k,q}_\fl (P) \hookrightarrow \A^{1,2}_\fl (P)$ induces a homeomorphism 
			
			$$\A^{k,q}_\fl (P)/ \G^{k+1,q}_0(P) \cong \A^{1,2}_\fl (P) / \G^{2,2}_0(P).$$ 
			Moreover, this is a diffeomorphism whenever the gauge group acts freely. 
  
 \medskip

Now suppose $X = \Sigma$ is a closed, connected, oriented surface, and $P \rightarrow \Sigma$ is a principal $\PU(r)$-bundle, with $t_2(P)\left[ \Sigma \right] \in \bb{Z}_r$ a generator. It follows from Lemma \ref{gaugestab0} that $\G_0^{2, q}(P)$ acts freely on the space of flat connections, and so it is immediate that the moduli space $M(P)$ is a smooth manifold. The next theorem goes back to Atiyah-Bott \cite{AB}, and captures the relevant properties of $\M(P)$.

\begin{theorem}\label{surfaces}
			Let $\Sigma$ be a closed, connected, oriented surface, and $P\rightarrow \Sigma$ a principal $\mathrm{PU}(r)$-bundle with $t_2(P)\left[\Sigma\right] \in \bb{Z}_r$ a generator. 
			
			\medskip
			1. \cite[Prop. 3.2.4, Thm. 3.3.2]{WWfloer} If $\Sigma$ has genus $g(\Sigma) \geq 1$, then the moduli space $\M(P)$ is a nonempty compact, symplectic manifold of dimension $(2g(\Sigma) - 2)(r^2-1)$, with even minimal Chern number, and with monotonicity constant $2\pi^2 \kappa_r / r$. The tangent space at $\left[\afalpha \right] \in \M(P)$ is isomorphic to $H^1_{\afalpha}$ and the symplectic form $\omega_{\M(P)}$ is given by restricting the pairing (\ref{pairing}). If $g(\Sigma) = 0$, then $\M(P) = \emptyset$. Moreover, $\M(P)$ is always connected and simply-connected.
			
			\medskip
			2. \cite[Lemma 3.3.5]{WWfloer} If $P' \rightarrow \Sigma$ is a second principal $\PU(r)$-bundle then any $\PU(r)$-equivariant bundle isomorphism ${\psi}: P \rightarrow P'$ covering the identity induces a symplectomorphism ${\psi}^*: \M(P') \rightarrow \M(P)$ by pullback. Furthermore, if ${\phi}: P \rightarrow P'$ is a second bundle map then ${\psi}^* = {\phi}^*$, so the moduli spaces $\M(P)$ and $M(P')$ are canonically symplectomorphic. 
			\end{theorem}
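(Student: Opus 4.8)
Part 1 is classical, and I would simply cite Atiyah--Bott \cite{AB} and Wehrheim--Woodward \cite{WWfloer}, recalling only the architecture. For $g(\Sigma)=0$ a flat connection on $P$ would exhibit $P$ as a flat bundle over a simply connected base, hence as the trivial bundle, contradicting that $t_2(P)[\Sigma]$ is a generator; so $\M(P)=\emptyset$. For $g(\Sigma)\geq 1$: smoothness of $\M(P)$ is immediate from Lemma \ref{gaugestab0} (the $\G_0$-action on $\A_\fl(P)$ is free); linearizing $F_\afalpha=0$ and the gauge-fixing condition, together with the Hodge decomposition (\ref{hodegedecomp}), identifies $T_{[\afalpha]}\M(P)$ with $H^1_\afalpha$ and exhibits $\omega_{\M(P)}$ as the restriction of (\ref{pairing}); the dimension $(2g-2)(r^2-1)$ is the Atiyah--Bott index computation for $d_\afalpha\oplus d_\afalpha^*$; compactness follows from Uhlenbeck compactness (Theorem \ref{kronsmagic} with $V=0$) once one checks that the generator hypothesis on $t_2$ rules out the reducible limits through which energy could escape, equivalently via the Narasimhan--Seshadri identification of $\M(P)$ with the projective moduli space of stable bundles of fixed determinant; the monotonicity constant $2\pi^2\kappa_r/r$ and the even minimal Chern number come from the Chern--Simons/Chern--Weil comparison of $[\omega_{\M(P)}]$ with the first Chern class; and connectedness together with simple-connectedness is read off from that same algebro-geometric model.

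The substantive local statement is Part 2. First, pullback $\psi^*\colon\Omega^\bullet(P',\frak{pu}(r))\to\Omega^\bullet(P,\frak{pu}(r))$ restricts to an affine isomorphism $\A(P')\to\A(P)$ satisfying $F_{\psi^*\afalpha}=\psi^*F_\afalpha$, hence carries $\A_\fl(P')$ onto $\A_\fl(P)$. Conjugation by $\psi$ is an isomorphism of topological groups $c_\psi\colon\G(P')\to\G(P)$, $\afu\mapsto\psi^{-1}\afu\psi$, so it restricts to $\G_0(P')\to\G_0(P)$; and $\psi^*(\afu^*\afalpha)=(c_\psi\afu)^*(\psi^*\afalpha)$, so $\psi^*$ is $c_\psi$-equivariant and descends to a bijection $\overline{\psi^*}\colon\M(P')\to\M(P)$ whose inverse is induced by $\psi^{-1}$; since $\psi$ is a diffeomorphism of total spaces, $\overline{\psi^*}$ is a diffeomorphism. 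It is symplectic: from $d\circ\psi^*=\psi^*\circ d$ one gets $d_{\psi^*\afalpha}\circ\psi^*=\psi^*\circ d_\afalpha$, so $\psi^*$ carries $\ker d_\afalpha$ and $\mathrm{im}\,d_\afalpha$ onto the corresponding spaces at $\psi^*\afalpha$, hence restricts to an isomorphism $H^1_\afalpha\to H^1_{\psi^*\afalpha}$, which is precisely $d_{[\afalpha]}\overline{\psi^*}$ under the identification from Part 1. Now $\omega_{\M(P)}(\mu,\nu)=\int_\Sigma\langle\mu\wedge\nu\rangle$ involves only the wedge, the fixed $\mathrm{Ad}$-invariant inner product on $\frak{pu}(r)$, and the orientation of $\Sigma$; since $\psi$ is $\PU(r)$-equivariant it induces a fibrewise isometry $P(\frak{pu}(r))\to P'(\frak{pu}(r))$, pullback commutes with $\wedge$, and $\psi$ covers $\mathrm{id}_\Sigma$, so $\int_\Sigma\langle\psi^*\mu\wedge\psi^*\nu\rangle=\int_\Sigma\langle\mu\wedge\nu\rangle$ and $\overline{\psi^*}^{\,*}\omega_{\M(P)}=\omega_{\M(P')}$.

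It remains to prove canonicity, which is the crux. Given a second bundle map $\phi\colon P\to P'$, set $\afu:=\phi^{-1}\circ\psi\in\G(P)$, so that $\psi=\phi\circ\afu$ and therefore $\psi^*=\afu^*\circ\phi^*$ on $\A_\fl(P')$; hence $\overline{\psi^*}=\overline{\phi^*}$ on $\M(P')$ precisely when the map $\overline{\afu^*}$ induced on $\M(P)$ is the identity. Since $\afu$ ranges over all of $\G(P)$ as $\phi$ varies, the claim is equivalent to: $\G(P)$ acts trivially on $\M(P)=\A_\fl(P)/\G_0(P)$. This is not formal, because $\G_0(P)$ already acts trivially by construction while the residual group $\pi_0(\G(P))\cong H^1(\Sigma,\bb{Z}_r)$ (cf.\ the discussion around Proposition \ref{degprop}) is nontrivial once $g(\Sigma)\geq 1$. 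The route I would take: parallel transport shows that for any $\afu\in\G(P)$ and flat $\afalpha$ the holonomy of $\afu^*\afalpha$ is conjugate in $\PU(r)$ to that of $\afalpha$, so $\overline{\afu^*}$ acts as the identity on the twisted character variety $\Hom^{\mathrm{tw}}(\pi_1\Sigma,\PU(r))/\PU(r)$; the remaining point is that the natural map $\M(P)\to\Hom^{\mathrm{tw}}(\pi_1\Sigma,\PU(r))/\PU(r)$ is a bijection, which I would deduce from the Narasimhan--Seshadri correspondence (both sides being a moduli space of stable bundles of fixed determinant of degree coprime to $r$, in particular both connected and simply connected, so a finite-group quotient map between them must be a homeomorphism). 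Verifying this triviality of the full gauge action on $\M(P)$ --- reconciling the nontrivial $\pi_0(\G(P))$ with the simple-connectedness of $\M(P)$ from Part 1 --- is the step I expect to be the main obstacle.
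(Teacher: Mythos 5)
Your reduction of Part 2 to the statement ``$\G(P)$ acts trivially on $\M(P)=\A_\fl(P)/\G_0(P)$'' is correct, and you are right to flag it as the crux, but the route you sketch through it does not close --- in fact the intermediate claims are false, and chasing them reveals a genuine subtlety in the theorem itself. The map $\M(P)\to\Hom^d(\pi_1\Sigma,\PU(r))/\PU(r)$ is \emph{not} a bijection: its target is the character variety $\A_\fl(P)/\G(P)$, so its fibers are orbits of $\G(P)/\G_0(P)\cong H^1(\Sigma,\bb{Z}_r)\cong\bb{Z}_r^{2g}$, and this action is \emph{free}. Indeed, when $t_2(P)[\Sigma]$ is a generator, the lifted holonomy $\tilde\rho:\pi_1(P)\to\SU(r)$ of any flat $\afa$ is irreducible (an invariant $k$-plane with $k<r$ would force $r\mid dk$, contradicting $\gcd(r,d)=1$); hence $Z_{\PU(r)}(\rho)$ is trivial, hence the stabilizer of $\afa$ in the \emph{full} group $\G(P)$ (not just in $\ker\eta$, as in Lemma~\ref{gaugestab0}) is trivial, so $\afu^*\afa$ and $\afa$ lie in distinct $\G_0$-orbits whenever $\afu\notin\G_0(P)$. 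Consequently the map $\M(P)\to\Hom^d_{\PU(r)}/\PU(r)$ is an $r^{2g}$-fold covering, the target has $\pi_1\cong\bb{Z}_r^{2g}$ and is not simply connected, and your appeal to ``both simply connected, fixed-determinant moduli'' fails twice over: only $\M(P)$ is the fixed-determinant moduli (the target is the $\PGL(r)$ moduli with determinant unfixed mod $\Gamma$), and even for a free finite quotient of a simply connected space the quotient map is never a bijection unless the group is trivial. Algebro-geometrically this is just the fact that for $\sigma\in J_\Sigma[r]$ nontrivial and $E$ a generic stable bundle with fixed determinant, $E\otimes M_\sigma\not\cong E$.

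The upshot is that $\G(P)$ does \emph{not} act trivially on $\M(P)$, so Part~2 cannot hold for arbitrary $\PU(r)$-equivariant bundle maps covering the identity; it holds precisely when $\phi^{-1}\circ\psi\in\G_0(P)$, i.e.\ when $\psi$ and $\phi$ are homotopic through bundle maps. This is exactly what ``carries over verbatim'' from [WWfloer, Lemma~3.3.5] secretly means: there the bundle isomorphisms are isomorphisms of $\U(r)$-bundles compatible with the fixed determinant, and two such differ by a determinant-one gauge transformation, which lies in a \emph{connected} group on a surface (since $\SU(r)$ is $2$-connected). In the $\PU(r)$ language this class of isomorphisms corresponds exactly to those with $\eta(\phi^{-1}\psi)=0$, i.e.\ $\phi^{-1}\psi\in\G_0(P)$. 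So the honest statement is that $\M(P)$ and $\M(P')$ are canonically symplectomorphic \emph{once a homotopy class of bundle isomorphism $P\to P'$ is fixed}; this is all that is ever used downstream (e.g.\ in Theorem~\ref{cobordisms}, where the identifications come from restriction of a bundle over a cobordism, hence a preferred homotopy class), but your attempt to prove the stronger unqualified statement was doomed from the start, and it is worth knowing why.

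A minor remark on Part~1: for compactness of $\M(P)$ no discussion of ``energy escape'' is needed --- flat connections have zero Yang--Mills energy, Uhlenbeck gives compactness of $\A_\fl(P)/\G(P)$ directly, and $\M(P)$ is a finite cover of that quotient, hence also compact. The rest of your Part~1 sketch (emptiness for $g=0$, Hodge theory for the tangent/symplectic structure, Atiyah--Bott dimension and Chern class/monotonicity computations, connectedness and simple connectedness from the Narasimhan--Seshadri model) is the standard architecture and matches the sources the paper cites.
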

			
			Here, $\kappa_r$ is the constant coming from our choice of metric on $\frak{g}$ as in Section \ref{TopolicaAspectsOfPrincipalPU(r)-Bundles}. In \cite{WWfloer} the authors work with $G = \U(r)$ and the space of \emph{central curvature connections with fixed determinant}, rather than the space of flat $\PU(r)$ connections, as we consider here. Their theorems carry over verbatim to our situation: On a surface, the space of central curvature $\U(r)$-connections with fixed determinant is naturally diffeomorphic to the space of flat $\PU(r)$-connections, and this diffeomorphism intertwines the relevant gauge group actions (see \cite[Lemma 3.2.5]{WWfloer}).

			\begin{remark}\label{surfacesremark}
			(a) Since $\Sigma$ is assumed to be oriented, any choice of metric $g$ on $\Sigma$ induces a Hodge star, $*$. Then $*$ descends to a compatible complex structure on the tangent bundle $T\M(P)$, which we denote by $*$ or $J_g$, depending on the context. 
			
			\medskip
			
			(b) The symplectic form on $\M(P)$ is given by integration over the surface $\Sigma$, so reversing the orientation of $\Sigma$ changes $(\M(P), \omega_{\M(P)})$ to $\M(P)^- \defeq (\M(P), -\omega_{\M(P)})$. 
			
			\end{remark}

			 The manifold $M(P)$ has two alternative descriptions that are often quite useful. We refer the reader to \cite{AB}, \cite{DS2} and \cite{WWfloer} for more details. For the first description, fix a basepoint $x_0 \in \Sigma$. Then the holonomy determines a map $\A_\fl(P) / \G(P) \rightarrow \Hom(\pi_1(\Sigma, x_0), \PU(r)) / \PU(r)$ that is a homeomorphism onto a component of the right-hand side, and $\PU(r)$ is acting on the $\Hom$-set by conjugation. We will use the notation $\Hom^d_{\PU(r)}$ to refer to this component. On the other hand, quotienting $\A_\fl(P)$ only by the identity component $\G_0(P)$ of the gauge group recovers $M(P)$. In particular, we obtain a map $\M(P) \rightarrow \Hom^d_{\PU(r)}$ that is a $\bb{Z}_r = \G(P) / \G_0(P)$-covering space away from the singularities of $\Hom^d_{\PU(r)}$.

			For the second description, fix a basepoint $p_0 \in P$ over $x_0 \in \Sigma$. The fundamental group of $P$ is generated by tuples $\left(A_1, B_1, \ldots, A_g, B_g, z\right)$ subject to the relations $\prod_{j = 1}^g \left[ A_j, B_j \right] = z$ and $\left[A_j ,z \right] = \left[ B_j , z \right] = z^r = \mathrm{Id}.$ Consider the space
			
			$$\begin{array}{rcl}
			\Hom^d_{\SU(r)}  & \defeq & \left\{ \rho \in \Hom(\pi_1(P, p_0), \SU(r)) \left| \: \rho(z) = e^{2\pi i d/r} \mathrm{Id} \right. \right\}\\
			\end{array}$$
			The group $\SU(r)$ acts freely on $\Hom^d_{\SU(r)} $ by conjugation and quotienting by this action recovers $M(P) \cong \Hom^d_{\SU(r)} / \SU(r)$. This diffeomorphism arises by sending a flat connection to its $\SU(r)$-valued holonomy in $\Hom(\pi_1(P), \SU(r))$.

			\medskip

			Next, suppose $Y$ is a compact, connected, oriented cobordism between non-empty, connected, oriented surfaces $\Sigma^\pm$. Our orientation convention is that $\partial Y = \overline{\Sigma^-} \sqcup \Sigma^+$, where the bar denotes the manifold with the opposite orientation. Let $Q \rightarrow Y$ be a principal $\PU(r)$-bundle and assume that $t_2(Q) \left[\Sigma^+\right] \in \bb{Z}_r$ is a generator. It follows that $t_2(Q) \left[\Sigma^+\right] = t_2(Q) \left[\Sigma^-\right]$. In particular, Lemma \ref{gaugestab0} implies that the moduli spaces $\M(Q)$ and $\M\left(Q \vert_{\Sigma^\pm} \right)$ of flat connections are all \emph{smooth} manifolds. 
			
			Restriction to each boundary component of $Y$ induces a $\G(Q)$-equivariant map $\rho: \A(Q) \longrightarrow \A(\left. Q \right|_{{\Sigma^-}}) \times \A(\left. Q \right|_{\Sigma^+})$ that preserves the flat connections. It follows that $\rho$ descends to a map, still denoted by $\rho$, at the level of moduli spaces. Then we set 
			
			$$\La(Q) \defeq \rho(\M(Q)) \subset \M(\left. Q \right|_{\Sigma^-})^- \times \M(\left. Q \right|_{\Sigma^+}).$$

  			\begin{theorem}\label{cobordisms}
			Let $Q \rightarrow Y$ be as above. Assume, in addition, that $Y$ is an elementary cobordism.
			
			\medskip			
			1. \cite[Theorem 3.4.1]{WWfloer} The map $\rho: \M(Q) \rightarrow \La(Q) \subset \M(P^-)^- \times \M(P^+)$ is a Lagrangian embedding. Furthermore, $\La(Q)$ is compact, oriented, and simply-connected (hence monotone).
			
			\medskip
			2. \cite[Lemma 3.4.4]{WWfloer} The Lagrangian $\La(Q)$ is independent of the choice of $Q$ under the canonical symplectomorphisms of Theorem \ref{surfaces}. If $Y = I \times \Sigma^+$ is a product cobordism from $\Sigma^- = \Sigma^+$ to itself, then $\La(Q) \subset \M(P^-)^- \times \M(P^-)$ is the diagonal.
			\end{theorem}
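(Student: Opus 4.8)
The plan is to transport the statements of Wehrheim--Woodward \cite[Theorem 3.4.1, Lemma 3.4.4]{WWfloer} along the identification --- recalled after Theorem \ref{surfaces}, and compatible with boundary restriction and with the gauge actions --- between flat $\PU(r)$-connections and central-curvature $\U(r)$-connections with fixed determinant. Since this identification is purely geometric, it suffices to re-derive the relevant input directly in the flat $\PU(r)$ picture. Throughout I would use that the tangent space to $\M(Q)$ at $[\afa]$ is the unrestricted harmonic space $H^1_{\afa}(Y, Q(\frak{g}))$, together with Hodge theory on the manifold-with-boundary $Y$, unique continuation for flat connections, and the flat case of Theorem \ref{kronsmagic}.

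For part 1 I would argue in several steps. First, $\rho$ is injective with injective differential: for an \emph{elementary} cobordism the boundary component $\Sigma^\star$ of larger genus has $\pi_1(\Sigma^\star)\twoheadrightarrow\pi_1(Y)$ (from the handle description of $Y$, one of the ``new'' generators of $\pi_1(\Sigma^\star)$ is a circle bounding a disk in $Y$), so restriction to $\Sigma^\star$ already separates points of $\M(Q)$ and is injective on each $H^1_{\afa}$; being a factor of $\rho$, this forces $\rho$ and $d\rho$ to be injective. Second, $\rho$ has isotropic image: for harmonic representatives $\mu,\nu\in H^1_{\afa}(Y)$ of two tangent vectors, Stokes' theorem and the Leibniz rule, together with $d_{\afa}\mu=d_{\afa}\nu=0$, give $\int_{\Sigma^+}\langle\mu\wedge\nu\rangle-\int_{\Sigma^-}\langle\mu\wedge\nu\rangle=\int_Y d\langle\mu\wedge\nu\rangle=0$, which is exactly vanishing of the ambient symplectic form. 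Third, the image is Lagrangian by a dimension count: injectivity of $d\rho$ forces $\dim\M(Q)=\dim\,\mathrm{im}\bigl(H^1_{\afa}(Y)\to H^1_{\afa}(\partial Y)\bigr)$, and the ``half lives, half dies'' principle (Poincar\'e--Lefschetz duality on $Y$) identifies this with $\tfrac12\bigl(\dim\M(P^-)+\dim\M(P^+)\bigr)$; hence $\rho$ is a Lagrangian embedding. Fourth, $\La(Q)$ is compact since it is a closed subset of the compact manifold $\M(P^-)^-\times\M(P^+)$: a sequence of flat connections on $Y$ with convergent boundary restrictions has, by Theorem \ref{kronsmagic}, a subsequence converging up to gauge to a flat connection on $Y$ whose restriction is the limit. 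Fifth, orientability is the standard Floer orientation of moduli of flat connections on oriented $3$-manifolds (trivialization of the determinant line of the deformation complex $0\to\Omega^0\to\Omega^1\to\Omega^2\to\Omega^3\to0$ with its Lagrangian boundary condition), while simply-connectedness is where the elementary hypothesis is essential: using the handle description, $\M(Q)$ is the locus in $\M(\Sigma^\star)$ of flat connections with trivial holonomy around the distinguished circle, and one fibers this locus over the other (smaller-genus) boundary moduli space --- simply connected by Theorem \ref{surfaces} --- with connected fiber, then checks that the monodromy kills $\pi_1$ of the fiber. Once $\pi_1(\La(Q))=0$, monotonicity is automatic: $\pi_2$ of the ambient manifold surjects onto $\pi_2$ of the pair, so symplectic area and twice the Maslov class descend from the ambient sphere class, and $\La(Q)$ inherits the monotonicity of $\M(P^-)^-\times\M(P^+)$.

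For part 2, independence of $Q$ rests on the classification of Section \ref{TopolicaAspectsOfPrincipalPU(r)-Bundles}: since $\dim Y=3$ we have $H^4(Y;\bb{Z})=0$, and for an elementary cobordism restriction is an isomorphism $H^2(Y;\bb{Z}_r)\xrightarrow{\sim}H^2(\Sigma^+;\bb{Z}_r)\cong\bb{Z}_r$, so $Q$ is determined up to isomorphism by $t_2(Q)[\Sigma^+]$; thus any two admissible bundles $Q,Q'$ are isomorphic by some $\Psi$ over the identity. Its boundary restrictions $\psi^\pm$ induce the canonical symplectomorphisms of Theorem \ref{surfaces}(2), and naturality of $\rho$ under pullback gives $\rho\circ\Psi^*=\bigl((\psi^-)^*\times(\psi^+)^*\bigr)\circ\rho'$, so $\La(Q)$ and $\La(Q')$ agree under the canonical identifications. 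For $Y=I\times\Sigma^+$, every flat connection is, after a gauge transformation to temporal gauge, the pullback of a flat connection on $\Sigma^+$ (flatness forces the $\Sigma^+$-component to be $t$-independent), so $\M(Q)\cong\M(P^+)$ and $\rho$ is $[\afalpha]\mapsto([\afalpha],[\afalpha])$, i.e.\ the diagonal.

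The main obstacle will be the simply-connectedness of $\La(Q)$. The embedding, isotropy, compactness, and orientation assertions are soft consequences of Stokes' theorem, unique continuation, Uhlenbeck compactness, and the standard determinant-line construction; but simply-connectedness genuinely requires the handle-by-handle description of the moduli space of an elementary cobordism, and it is precisely here that the hypothesis that $Y$ is elementary cannot be dropped.
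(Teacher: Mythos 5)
The paper treats Theorem \ref{cobordisms} as an import: both parts carry explicit citations to Wehrheim--Woodward \cite{WWfloer}, and the only "proof'' the paper offers is the remark following Theorem \ref{surfaces} that the diffeomorphism of \cite[Lemma 3.2.5]{WWfloer} between central-curvature $\U(r)$-connections with fixed determinant and flat $\PU(r)$-connections intertwines restriction and the gauge actions, so their statements transfer verbatim. Your opening paragraph reproduces exactly this, so up to that point you match the paper. Where you diverge is in electing to re-derive \cite[Theorem 3.4.1, Lemma 3.4.4]{WWfloer} from scratch in the $\PU(r)$ picture rather than simply invoking them, which is substantially more work than the paper undertakes; what you sketch (Stokes for isotropy, Poincar\'e--Lefschetz ``half lives, half dies'' for the dimension count, Uhlenbeck compactness for closedness of $\La(Q)$, $\pi_1$-surjection from the larger-genus boundary for injectivity, and the elementary-handle description for simple-connectedness) is in effect a recapitulation of the Wehrheim--Woodward argument.

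Two details in the re-derivation deserve more care than the sketch gives. First, for injectivity via $\pi_1(\Sigma^\star)\twoheadrightarrow\pi_1(Y)$: the moduli spaces here are quotients by $\G_0$ only, so they are $\bb{Z}_r$-covers of character varieties, and you must check that the covering data restricts compatibly to $\Sigma^\star$; it does, but it is not automatic from the surjection on $\pi_1$ of the base. Second, the orientability of $\La(Q)$ is better obtained as a consequence of simple-connectedness ($w_1(\La(Q))\in H^1(\La(Q);\bb{Z}_2)=0$) rather than from a determinant-line argument, so the logical order is simple-connectedness $\Rightarrow$ orientability $\Rightarrow$ monotonicity; your write-up lists orientability before simple-connectedness as if independent, which inverts the actual dependency. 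Neither issue is a real obstruction, and you correctly identify simple-connectedness as the step genuinely requiring the elementary-cobordism hypothesis, but both should be flagged if this is to stand as a self-contained proof rather than a citation.
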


  		\subsubsection{Quilted Floer cohomology for broken circle fibrations}\label{QuiltedFloerCohomologyForBrokenCircleFibrations}

Fix a broken circle fibration $f: Y \rightarrow S^1$ and a $d$-compatible principal $\PU(r)$-bundle $Q \rightarrow Y$, where $d \in \bb{Z}_r$ is a generator. In this section we review the definition from \cite{WWfloer} of the \emph{quilted Floer group} associated to this data. Recall the decomposition (\ref{decopmofYbullet}) of $Y$ into elementary cobordisms $Y_{j(j+1)}$ and $I \times \Sigma_j$, and the associated decomposition of the bundle $Q$ from Section \ref{CompatibleBundlesForBrokenCircleFibrations}. By construction, each $P_j \rightarrow \Sigma_j$ and $Q_{j(j+1)} \rightarrow Y_{j(j+1)}$ satisfies the conditions of Theorems \ref{surfaces} and \ref{cobordisms}, respectively. In the terminology of \cite{WWquilt}, this implies that the tuple 

$$\underline{L}(Q) \defeq \left(\La(Q_{01}), \ldots, \La(Q_{(N-1)0}\right)$$ 
determines a cyclic Lagrangian correspondence in the product symplectic manifold 

$$\underline{M} \defeq \left(M(P_j) \right)_{j = 0}^{N-1}.$$ 
It also implies that the quilted Floer cohomology group of $\underline{L}(Q)$ is well-defined. We denote this cohomology group by $HF^\bullet_\quilt(Y, f, Q)_{r, d}$ and call this the {\bfseries quilted Floer group} of $(Y, f, Q, r, d)$. Since each $M(P_j)$ has even minimal Chern number, this quilted Floer group admits a relative $\bb{Z}_4$-grading. Moreover, $HF^\bullet_\quilt(Y, f, Q)_{r, d}$ only depends on the bundle isomorphism type of $Q$. The following theorem, due to Gay, Wehrheim and Woodward, addresses the dependence of this group on $f$. Note that if $f$ and $f'$ are two homotopic broken circle fibrations on $Y$, then any bundle $Q$ that is $d$-compatible with $f$ is also $d$-compatible with $f'$. 
		
		\begin{theorem}\cite{GWW} \cite{WWfloer} \label{GWWtheorem}
		If $f$ and $f'$ are broken circle fibrations that are homotopic as maps $Y \rightarrow S^1$, then there is a canonical isomorphism $HF^\bullet_\quilt(Y, f, Q)_{r, d} \cong HF^\bullet_\quilt(Y, f', Q)_{r, d},$ and this isomorphism preserves the relative gradings.
		\end{theorem}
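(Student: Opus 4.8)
The plan is to realize an arbitrary homotopy from $f$ to $f'$ as a finite composition of elementary moves on broken circle fibrations, and to produce a canonical grading-preserving isomorphism on quilted Floer cohomology for each such move; this is the approach of \cite{GWW}. By the Cerf theory of indefinite circle-valued Morse functions (\cite{GK}, used in this setting by \cite{Lek1}), after a generic perturbation the homotopy passes through broken circle fibrations except at finitely many parameter values, at each of which exactly one of the following occurs: \emph{(i) isotopy} --- the number and indices of the critical points are constant and no two critical values collide, but the regular values $r_i$ and the identifications (\ref{id1}) vary; \emph{(ii) critical-value crossing} --- two critical values lying in consecutive windows $Y_{i(i+1)}$ and $Y_{(i+1)(i+2)}$ exchange order, the intervening window $I\times\Sigma_{i+1}$ collapsing and re-expanding; \emph{(iii) birth/death} --- a canceling pair of critical points of indices $1$ and $2$ is created or destroyed. (Creating or deleting a pair of regular values bounding a window containing no critical value is the degenerate case of (ii).) It then suffices to treat each move and to verify coherence, i.e. that the resulting isomorphism is independent of the chosen sequence. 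Note that $Q$ remains $d$-compatible throughout, as recorded before the statement.

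For an isotopy, the surfaces $\Sigma_i$ and the elementary cobordisms $Y_{i(i+1)}$ move by ambient isotopy in $Y$, so the bundles $P_i,Q_{i(i+1)}$ stay in fixed isomorphism classes. By Theorem~\ref{surfaces}(2) the symplectic manifolds $\M(P_i)$ at the two ends of the isotopy are canonically symplectomorphic, and by Theorem~\ref{cobordisms}(2) the Lagrangians $\La(Q_{i(i+1)})$ correspond under these symplectomorphisms. Thus the cyclic correspondences $\underline{L}(Q)$ at the two ends are identified by a tuple of symplectomorphisms, and such a tuple induces a canonical isomorphism on quilted Floer cohomology that preserves the relative $\bb{Z}_4$-grading, since symplectomorphisms act trivially on it.

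The moves (ii) and (iii) are instances of the Wehrheim--Woodward geometric composition theorem \cite{WWquilt}: if $\La(Q_{i(i+1)})\subset\M(P_i)^-\times\M(P_{i+1})$ and $\La(Q_{(i+1)(i+2)})\subset\M(P_{i+1})^-\times\M(P_{i+2})$ have an \emph{embedded} geometric composition, then the quilted Floer cohomology computed with these two correspondences adjacent in the cyclic tuple is canonically isomorphic to the one computed with the single correspondence $\La(Q_{i(i+1)})\circ\La(Q_{(i+1)(i+2)})$ in their place. By a Mayer--Vietoris argument for flat connections --- a pair of flat connections on the two pieces agreeing up to gauge on $\Sigma_{i+1}$ glues, uniquely up to gauge, to a flat connection on the composed cobordism, and conversely --- the set $\La(Q_{i(i+1)})\circ\La(Q_{(i+1)(i+2)})$ equals the image of the restriction map $\M(Q_{i(i+1)}\cup Q_{(i+1)(i+2)})\to\M(P_i)^-\times\M(P_{i+2})$, i.e. $\La$ of the composed cobordism. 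For a birth/death this composed cobordism is a product $I\times\Sigma_i$ (the handles cancel, and $\Sigma_i\cong\Sigma_{i+2}$), so by Theorem~\ref{cobordisms}(2) its Lagrangian is the diagonal $\Delta\subset\M(P_i)^-\times\M(P_i)$; in particular the composition is embedded, the theorem applies, and one further application (absorbing $\Delta$ into a neighboring correspondence) produces exactly the cyclic correspondence of the post-death fibration. For a critical-value crossing, the composed cobordism --- two disjointly attachable handles --- is independent of the order of the two critical values, so $\La(Q_{i(i+1)})\circ\La(Q_{(i+1)(i+2)})$ and the reversed geometric composition both equal $\La$ of this fixed cobordism, and two applications of the theorem relate the two orderings. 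In each case the relative $\bb{Z}_4$-gradings match: the degree shift in the composition isomorphism is pinned down by the even minimal Chern numbers and the simply-connectedness of the Lagrangians in Theorems~\ref{surfaces}--\ref{cobordisms}, and cancels around the cyclic tuple. Finally, canonicity of the total isomorphism follows from the naturality and coherence of the constituent maps (associativity of geometric composition and functoriality of the symplectomorphisms) together with the connectedness of the space of homotopies from $f$ to $f'$, a two-parameter Cerf-theoretic input.

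The hard part is verifying the embeddedness hypothesis of the geometric composition theorem for the moves (ii) and (iii) and controlling the associated degree shifts. For a birth/death this reduces to identifying $\La$ of a product cobordism with the diagonal, which is Theorem~\ref{cobordisms}(2); but for a general critical-value crossing one must know that $\La$ of the composed, non-elementary cobordism is a smooth embedded Lagrangian and that the geometric composition is transverse, or at least clean. This is precisely where the simply-connected, monotone, even-minimal-Chern-number structure from Theorems~\ref{surfaces} and \ref{cobordisms} is essential, and it is the part of the argument genuinely supplied by \cite{GWW}. A further technical point, handled by Gay--Kirby's analysis of indefinite Morse $2$-functions \cite{GK}, is that the homotopy between $f$ and $f'$ can be realized through the moves above without ever passing through maps having index-$0$ or index-$3$ critical points or disconnected fibers --- that is, without leaving the world of broken circle fibrations.
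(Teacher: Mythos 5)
The paper does not prove this theorem; it is attributed entirely to \cite{GWW} and \cite{WWfloer}, and no argument is given in the text. There is therefore no ``paper's own proof'' to compare against.

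That said, your proposal is a plausible reconstruction of the Cerf-theoretic strategy that \cite{GWW} is understood to follow: decompose the homotopy from $f$ to $f'$ into elementary moves via \cite{GK}, handle the isotopy stratum by the canonical symplectomorphisms of Theorems~\ref{surfaces}(2) and \ref{cobordisms}(2), and handle critical-value swaps and births/deaths via embedded geometric composition in the sense of \cite{WWquilt}, with the birth/death case reducing to insertion/removal of a diagonal. This is the right skeleton, and you correctly flag the two genuinely hard steps: (1) verifying that $\La$ of the composed (non-elementary) cobordism is an embedded Lagrangian and that the geometric compositions are embedded/transverse so the Wehrheim--Woodward theorem applies --- note that Theorem~\ref{cobordisms} as stated only covers \emph{elementary} cobordisms, so this does not follow from anything in the present paper; and (2) the coherence of the resulting isomorphisms over a two-parameter family of homotopies, plus the bookkeeping of grading shifts around the cyclic tuple. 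These are the actual content of \cite{GWW}, not consequences of the ingredients you cite, so what you have is a correct plan whose crux is deferred to the reference rather than a self-contained argument. One small imprecision: treating the insertion of an empty window as a ``degenerate case of (ii)'' glosses over the fact that this is a separate invariance (invariance of quilted Floer cohomology under inserting/deleting a diagonal), though it too follows from the composition theorem with $L\circ\Delta=L$.
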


		The remainder of this section is spent describing $HF^\bullet_\quilt(Y, f, Q)_{r, d}$ in more detail. We first remark that the group $HF^\bullet_\quilt(Y, f, Q)_{r, d}$ is the cohomology of a chain complex $(CF^\bullet_\quilt, \partial_\quilt)$. We will give a precise definition of $CF^\bullet_\quilt$ and $\partial_\quilt$, beginning with the former. 
		
		For each $j$, fix a time-dependent Hamiltonian $H_j = H_{t, j}: \M(P_j) \rightarrow \bb{R}$, which we assume vanishes for $t \notin (0, 1)$. Let $\underline{H}$ denote the tuple $(H_j)_j$, and call this a {\bfseries split-type} Hamiltonian for $\underline{M}$. The time-dependent function $H_j$ lifts to a $\G_0(P_j)$-invariant time-dependent function on $\A_\fl(P_j)$. Similarly, the Hamiltonian vector field of $H_j$ on $M(P_j)$ lifts to a Hamiltonian vector field $X_{t,j}: \A_\fl(P_j) \rightarrow \Omega^1(\Sigma_j, P_j(\frak{g}))$ satisfying
			
			$$X_{t,j}({\afu}^*{\afalpha}_j) = \mathrm{Ad}({\afu}^{-1})X_{t,j}({\afalpha}_j), \indent \textrm{and} \indent d_{\afalpha_j} X_{t,j}({\afalpha}_j) = 0,$$
			for all ${\afu} \in \G_0(P_j)$ and $\afalpha_j \in \A_{\fl}(P_j)$. Note that $X_{t, j}$ vanishes when $t \notin (0, 1)$. Then the chain group $CF^\bullet_\quilt$ is freely generated over $\bb{Z}_2$ by the set of {\bfseries $\underline{H}$-perturbed generalized Lagrangian intersection points} $\underline{e}$. These are tuples
		
		$$\underline{e} = \left(\left[{\afa}_{01}\right], \left[{\afa}_{12}\right], \ldots, \left[{\afa}_{(N-1)0}\right] \right),$$
  			where ${\afa}_{j(j+1)} \in \A_{\fl}(Q_{j(j+1)})$, $\left[ {\afa}_{j(j+1)} \right]$ denotes the $\G_0(Q_{j(j+1)})$-equivalence class, and each tuple is required to satisfy the following: For each $j$, there is a map $\afalpha_j: I \rightarrow \A_\fl(P_j)$ such that  
  			
  			\begin{equation}
  			\partial_t \afalpha_j(t)  = X_{t, j}(\afalpha_j(t)), \indent \left[ \afa_{(j-1)j} \vert_{\Sigma_j} \right] = \left[ \afalpha_{j}(0) \right], \indent \textrm{and} \indent \left[ \afalpha_{j}(1) \right] = \left[ \afa_{j(j+1)} \vert_{\Sigma_j} \right],
  			\label{quiltboundarycond}
  			\end{equation}
  			where, here, the brackets denote $\G_0(P_j)$-equivalence class. We use ${\mathcal{I}}_{\underline{H}}(\underline{L}(Q))$ to denote the set of $\underline{H}$-perturbed generalized Lagrangian intersection points. This arises naturally as the critical point set of a suitably defined \emph{${\underline{H}}$-perturbed symplectic action functional}, see \cite[Section 5.2]{WWquilt}. We say ${\underline{e}} \in {\mathcal{I}}_{\underline{H}}(\underline{L}(Q))$ is {\bfseries non-degenerate} if it is a non-degenerate critical point in the Morse-theoretic sense. This is equivalent to requiring that the linearization at ${\underline{e}}$ of the defining equations (\ref{quiltboundarycond}) becomes an injective operator, modulo the linearized gauge action on the $P_j$. Unless otherwise specified, we will assume $\underline{H}$ has been chosen so that all elements of ${\mathcal{I}}_{\underline{H}}(\underline{L}(Q))$ are non-degenerate. This can always be done \cite[Proposition 5.2.1]{WWquilt} and, when this is the case, it follows that ${\mathcal{I}}_{\underline{H}}(\underline{L}(Q))$ is a finite set.

  			  Now we move on to discuss the boundary operator $\partial_\quilt$. By linearity, it suffices to define $\partial_\quilt$ in terms of its matrix coefficient $\langle \partial_\quilt \underline{e}^-,  \underline{e}^+ \rangle$, for $\underline{e}^\pm \in {\mathcal{I}}_{\underline{H}}(\underline{L}(Q))$. This can be defined concisely by saying that it is the mod-2 count of the isolated $(\underline{J}, \underline{H})$-holomorphic quilted cylinders that limit to $\underline{e}^\pm$ at $\pm \infty$. Now we unravel this. Here, $\underline{J} = (J_j)_j$ is a {\bfseries split-type} almost complex structure on $\underline{M}$, meaning that each $J_j$ is a compatible almost complex structure on $\M(P_j)$. In this paper we will always assume that each ${J_j}$ arises as follows: Fix a metric on $Y$, and let $J_j = *$ be the compatible almost complex structure on $\M(P_j)$ induced by the Hodge star on $\Sigma_j$ as in Remark \ref{surfacesremark} (a). 
  			  
  			  Next, let $\underline{H} = (H_j)_j$ be a split-type Hamiltonian as above. Then we define a $(\underline{J}, \underline{H})$-{\bfseries holomorphic quilted cylinder} to be a tuple $\underline{v} = \left(\left[{\afalpha}_0\right], \ldots, \left[{\afalpha}_{N-1}\right] \right)$ where $\left[ \cdot \right]$ denotes the $\G_0(P_j)$-equivalence class for the relevant $j$, and ${\afalpha}_j$ is a map $\mathbb{R} \times I \rightarrow \A_{\fl}(P_i)$ satisfying the following conditions:

  			$$\begin{array}{lrcl}
  			\bullet~~ \textrm{($(\underline{J}, \underline{H})$-holomorphic)} &\mathrm{proj}_{H_{{\afalpha}_i}^1} \left( \partial_s {\afalpha}_i + * \left(\partial_t {\afalpha}_i - X_{t,i}({\afalpha}_i) \right) \right) &=& 0\\
  			\bullet~~ \textrm{(Lagrangian seam)}& \left(\left[{\afalpha}_i\left(s, 1\right)\right], \left[{\afalpha}_{i+1}\left(s, 0\right)\right]\right)  &\in &\La(Q_{i(i+1)})
			\end{array}$$
			where $\mathrm{proj}_{H^1_{{\afalpha}}}$ denotes the $L^2$-orthogonal projection to the harmonic space $H^1_{{\afalpha}}$. We assume that $\underline{v}$ {\bfseries limits to $\underline{e}^\pm$ at $\pm \infty$}, in the sense that $\lim_{s \rightarrow \pm \infty} \afalpha_j(s, t) = \afalpha^\pm (t)$ for all $j$, where the $\afalpha^\pm$ are coming from $\underline{e}^\pm$ as in (\ref{quiltboundarycond}). Here we require that the convergence is uniform in $t$, and when this is the case it implies that the convergence is actually ${\mathcal{C}}^\infty$ in $t$. We say that $\underline{v}$ is {\bfseries regular} if the linearization at $\underline{v}$ of the defining equations is surjective. The desirable cases are when all $(\underline{J}, \underline{H})$-holomorphic quilted cylinders are regular. When this is the case, we say that $(\underline{J}, \underline{H})$ is {\bfseries regular (for quilted Floer theory)}. We show in Section \ref{CompatiblePerturbations} that, given any $\underline{J} = (*)_j$, there always exists a perturbation $\underline{H}$ so that $(\underline{J} , \underline{H})$ is regular. (In Lagrangian intersection Floer theory it is standard that there exists a perturbation $H$ making $(J, H)$ regular \cite{FHS}. However the existence becomes more subtle in quilted Floer theory because $\underline{H} = (H_j)$ is required to be of \emph{split-type}; see \cite{WWquilt} \cite{WWquilterr} for a similar problem.) Unless otherwise specified, we will always assume $\underline{H}$ has been chosen so that $(\underline{J},\underline{H})$ is regular. When this is the case, the set of all $(\underline{J} , \underline{H})$-holomorphic quilted cylinders limiting to $\underline{e}^\pm$ forms a smooth manifold. Finally, we say $\underline{v}$ is {\bfseries isolated} if it belongs to the zero-dimensional component of this manifold.

	Next we describe how a $(\underline{J},\underline{H})$-holomorphic quilted cylinder $\underline{v} = (\afalpha_0, \ldots, \afalpha_{N-1})$ can be represented as a connection on the bundle $\bb{R} \times Q \rightarrow \bb{R} \times Y$. In light of the Hodge isomorphism (\ref{hodegedecomp}), the \emph{$(\underline{J},\underline{H})$-holomorphic condition} is equivalent to the statement that, for all $j$,
			
			$$\partial_s {\afalpha}_j + * \left(\partial_t {\afalpha}_j - X_{t,j}({\afalpha}_j) \right)= d_{{\afalpha}_j} {\afphi}_j + *d_{{\afalpha}_j} {\afpsi}_j$$ 
			for some ${\afphi}_j, {\afpsi}_j: \mathbb{R} \times I \rightarrow \Omega^0(\Sigma_j, P_j(\frak{g}))$. In fact, ${\afphi}_j, {\afpsi}_j$ are uniquely determined by this equation since all flat connections on $P_j$ are irreducible. Furthermore, ${\afphi}_j$ and ${\afpsi}_j$ are as smooth in $s, t$ as the connection ${\afalpha}_j$. On the other hand, the \emph{Lagrangian seam} condition implies that, for each $j$, there is some path ${\afa}_{j(j+1)}: \mathbb{R} \rightarrow \A_{\fl}(Q_{j(j+1)})$  with 
 
 $$\left({\afalpha}_j\left(s, 1\right), {\afalpha}_{j+1}\left(s, 0\right)\right) = \left(\left. {\afa}_{j(j+1)}(s)\right|_{\Sigma_j} , \left. {\afa}_{j(j+1)}(s) \right|_{\Sigma_{j+1}}\right),$$ 
and $\afa_{j(j+1)}$ is the unique connection having this property, up to the action of gauge transformations on $Y_{j(j+1)}$ that restrict to the identity on the boundary. Then the irreducibility of flat connections on $Q_{j(j+1)}$ implies that there is a unique ${\afp}_{j(j+1)} : \mathbb{R} \rightarrow \Omega^0(Y_{j(j+1)}, Q_j(\frak{g}))$ such that $\partial_s {\afa}_{j(j+1)}   - d_{{\afa}_{j(j+1)} } {\afp}_{j(j+1)} \in H^1_{{\afa}_{j(j+1)} }$ and $\afp_{j(j+1)}$ agrees with $\afphi_j$ and $\afphi_{j+1}$ on the seams. We will write ${\afalpha}$ (resp. ${\afa}$) for the connection on $\Sigma_\bullet$ (resp. $Y_\bullet$) that restricts to ${\afalpha}_j$ on $\Sigma_j$ (${\afa}_{j(j+1)}$ on $Y_{j(j+1)}$). In a similar manner, we define ${\afphi}, {\afpsi}$, which are forms on $\Sigma_\bullet$, and ${\afp}$, which is a form on $Y_\bullet$. Then $\afa_{j(j+1)}$ can be chosen so that the data of ${\afalpha}, {\afphi}, {\afpsi}, {\afa}, {\afp}$ patch together to define a smooth connection $\afA$ on $\bb{R} \times Q$ with
 		
 		$$\afA\vert_{\left\{s \right\} \times Y_\bullet} = \afa(s) + p(s) \: ds, \indent \afA\vert_{\left\{(s,t) \right\} \times \Sigma_\bullet} = \afalpha(s, t) + \afphi(s, t) \: ds + \afpsi(s, t)\: dt.$$
 		We will refer to such a connection $\afA$ as a {\bfseries holomorphic curve representative}.
 		
 		 To summarize, the boundary operator $\partial_{\quilt}$ counts isolated $\G_0(P_\bullet)$-equivalence classes of $\afalpha$ satisfying

			\begin{equation}
			\begin{array}{lrcl}
  			\bullet~~ \textrm{($(\underline{J}, \underline{H})$-holomorphic)} &\partial_s {\afalpha} - d_{{\afalpha}} {\afphi} + * (\partial_t {\afalpha} - X_t(\afalpha) - d_{{\afalpha}} {\afpsi}) \vert_{\Sigma_\bullet} & =& 0\\
  			& F_{{\afalpha}} \vert_{\Sigma_\bullet} &=& 0\\
  			  			&&&\\
  			\bullet~~ \textrm{(Lagrangian seam)}& F_{{\afa}} \vert_{Y_\bullet}&=& 0
  			
  			\end{array}
  			\label{jholoconditions}
  			\end{equation}	
			with appropriate limits at $\pm \infty$, and where the $\afphi, \afpsi$ and $\afa$ are uniquely determined by these conditions.

\section{Higher rank instanton Floer theory}\label{InstantonFloerCohomology}

Throughout this section, $Y$ will denote a closed, connected, oriented 3-manifold equipped with a principal $\PU(r)$-bundle $Q \rightarrow Y$. We assume this bundle satisfies the following hypothesis.

\medskip

\begin{itemize}
\item[{\bfseries (H1)}] There is an embedding $\iota: \Sigma \rightarrow Y$ of a closed, oriented surface $\Sigma$ with the property that $d \defeq  t_2(Q)\left[ \Sigma \right] \in \bb{Z}_r$ is a generator.
\end{itemize}
\medskip
\noindent Such bundles exist if, for example, $Y$ has positive first Betti number. Note also that (H1) implies that $Y$ is necessarily \emph{not} a homology $S^3$. We also fix a subgroup $\K \subseteq \G(Q)$, which we assume consists of connected components of $\G(Q)$. In particular, this means that the identity component $\G_0(Q) \subseteq \G(Q)$ acts freely on $\K$ by left multiplication. Hence $\K / \G_0(Q)$ is well-defined and is the group $\pi_0(\K)$ of connected components of $\K$. 

We begin by formulating instanton Floer cohomology, working modulo the gauge subgroup $\K$. To ensure we have a well-defined theory, we will make various hypotheses along the way. In Section \ref{Gradings} we restrict attention to specific subgroups $\K$ and show that these hypotheses are satisfied in each case. In that section we also discuss how the various choices of $\K$ determine various gradings. Section \ref{InstantonFleorCohomologyForBrokenCircleFibrations} specializes the discussion to the case where $Y$ is a broken circle fibration.

\subsection{Instanton Floer cohomology for $G = \PU(r)$}\label{InstantonFloerCohomologyForG=PU(r)}

Let $\K$ be as above. We will use the same symbol to denote the Sobolev completion of $\K$ in $\G^{k+1, p}(Q)$. Throughout we assume $k, p$ are chosen so that $\G^{k+1, p}(Q)$ is a Lie group that acts smoothly on $\A^{k, p}(Q)$. Fix a smooth $\K$-invariant function $H: \A^{k,p}(Q) \rightarrow \bb{R}$. Then there is a smooth map ${X} = {X}_H$ from $\A^{k, p}(Q)$ into the $W^{k, p}$-completion of $\Omega^2(Y, Q(\frak{g}))$ that represents the differential of $H$ in the sense that $(dH)_{\afa} (v) = \int_Y \langle X(a) \wedge v \rangle$ for all $v \in T_{\afa} \A^{k, p}(Q)$. By the invariance of $H$, we have the identity ${X}({\afu}^*{\afa}) = \mathrm{Ad}({\afu}^{-1}) {X}({\afa})$ for ${\afu} \in \K$. We assume $V \defeq X_H$ satisfies conditions (a) and (b) from Theorem \ref{kronsmagic}, and when this is the case we call $H$ an {\bfseries instanton perturbation}. 

Next, fix a reference connection ${\afa}_0 \in \A_\fl^{k,p}(Q)$ and define the {\bfseries perturbed Chern-Simons functional} $	\CS_{H, {\afa}_0}: \A^{k,p}(Q) \longrightarrow \bb{R}$ by setting
			
				$$\begin{array}{rcl}
				\CS_{H, {\afa}_0}({\afa})&  \defeq  & -H({\afa}) + \fracd{1}{2} \intd{Y}\:  \left\langle d_{{\afa}_0} v \wedge v \right\rangle   + \fracd{1}{6} \intd{Y} \: \left\langle \left[ v \wedge v\right] \wedge v \right\rangle.
				\end{array}$$
			where $v \defeq {\afa} - {\afa}_0$. This map is smooth provided $k, p$ are in an appropriate range for Sobolev multiplication, which we also assume throughout. For example, this is the case if $k = 1, p = 2$. From now on we will drop the Sobolev exponents from the notation, unless they are relevant.

			The perturbed Chern-Simons functional only depends on ${\afa}_0$ up to an overall constant, so we will usually write $\CS_{H} \defeq \CS_{H, {\afa}_0}.$ The differential of $\CS_H$ at a connection ${\afa}$ satisfies 
			
			$$(d\CS_H)_{\afa} {\afv} = \intd{Y} \: \langle (F_{\afa} - {X}_H({\afa})) \wedge {\afv} \rangle$$ 
			for all ${\afv} \in \Omega^1(Y, Q(\frak{g}))$. It follows from (\ref{constantthings}) that there is an identity of the form 
			
			\begin{equation}\label{thignynew}
			\CS_{H}({\afu}^*{\afa}) - \CS_{H}({\afa} )  = {4 \pi^2 r^{-1}\kappa_r} \deg({\afu})
			\end{equation}		
			for all ${\afa} \in \A(Q)$ and ${\afu} \in \K$. (Our definition of degree is the negative of the one appearing in \cite{DS2}.) If $H$ is invariant under a subgroup of $\G(Q)$ larger than $\K$, then (\ref{thignynew}) holds for all $\afu$ in this larger subgroup. 
			
					\medskip

			Next, we define an instanton Floer cohomology group $HF^\bullet_{\inst}(Q)^\K$. This can be viewed as the Morse cohomology, modulo gauge transformations in $\K$, of the Chern-Simons functional. That is, when it is defined, $HF_{\inst}^\bullet(Q)^\K$ is the cohomology associated to a chain complex $(CF^\bullet_{\inst}(Q), \partial_{\inst})$. We describe this now.
			
		The chain complex will be generated by $\K$-equivalence classes of the critical points of $\CS_{H}$. These critical points are precisely the {\bfseries $H$-flat connections} ${\afa} \in \A(Q)$, which are defined by the condition $F_{\afa} = {X}({\afa}).$ We denote the set of $H$-flat connections by $\A_\fl(Q, H)$. Note that $\K$ acts on this space, and we set 
		
		$${\mathcal{I}}_{H}(Q) \defeq \A_\fl^{k,p}(Q, H) / \K.$$ 
		This is independent of the choice of $k,p$. Given an $H$-flat connection $\afa$, we denote its image in ${\mathcal{I}}_{H}(Q) $ by $\left[ \afa \right]$. 
		 		
		 		\begin{example}
		 		Suppose $H = 0$. If $\K = \G_0(Q)$, then ${\mathcal{I}}_{0}(Q) $ is the moduli space $M(Q) = \A_\fl(Q) / \G_0(Q)$ of flat connections. For general $\K$, ${\mathcal{I}}_{0}(Q)$ is a quotient of $M(Q) $ by the discrete group $\K/ \G_0(Q)$. 
				\end{example}		 		
		 		
		 		Now we set
			
			$$CF_{\inst}^\bullet (Q) \defeq \bigoplus_{\left[{\afa} \right] \in {\mathcal{I}}_{{H}}(Q) } \mathbb{Z}_2 \langle \left[ {\afa} \right] \rangle.$$
			The desirable cases are when all of the critical points ${\afa} \in \A_\fl(Q, H)$ are {\bfseries non-degenerate} in the sense that they are non-degenerate critical points of $\CS$ in the Morse-theoretic sense, taken modulo gauge. Fixing a metric $g$ on $Y$, this is equivalent to requiring that the extended Hessian
			
			\begin{equation}\label{extendedhess}
			\D_{\afa} \defeq \left(\begin{array}{cc}
			*d_{\afa} - *d{X}_{\afa} & -d_{\afa}\\
			-d_{\afa}^* & 0 
			\end{array}\right)
			\end{equation}			
			is non-degenerate as an operator $W^{k,p} \rightarrow W^{k-1,p}$. Here, the short-hand $W^{j,p}$ denotes the obvious Sobolev completion of the vector space $\Omega^1(Y, Q(\frak{g})) \oplus \Omega^0(Y, Q(\frak{g})).$ There always exists a perturbation $H$ such that all $H$-flat connections are non-degenerate \cite[Chapter 5]{Donfloer}. We discuss this further in Section \ref{CompatiblePerturbations}. However, for the remainder of this section, we take this as a hypothesis.
			
\medskip			
			
				\begin{itemize}
					\item[{\bfseries (H2)}] The instanton perturbation $H$ has been chosen so that all $H$-flat connections are non-degenerate.  
				\end{itemize}			
				\medskip
			It follows immediately from (H2) that all $H$-flat connections $a$ are irreducible.

			One consequence of Hypothesis (H2) is that the chain complex $CF_{\inst}^\bullet (Q) $ admits a relative grading. To define this grading, set $n_{\K} \defeq \inf\left\{ \deg(\afu) > 0 \: \vert \: \afu \in \K \right\}.$ By convention, if all elements of $\K$ have degree zero, then $n_{\K} = \infty$.

		 \begin{proposition}\label{gradinglemma}
		 	Assume (H1-2). Then there is a relative $\bb{Z}$-grading 
		 	
		 	$$\mu_\inst: \A_\fl(Q, H) \times \A_\fl(Q, H) \longrightarrow \bb{Z},$$ 
		 	and this satisfies
		 	
		 	\begin{equation}\label{mod4grading1000}
		 	\mu_\inst({\afa}_0, {\afu}^*{\afa} ) - \mu_\inst({\afa}_0, {\afa}) = 4 \deg({\afu})
		 	\end{equation}
		 	for all ${\afa}_0, {\afa} \in \A_\fl(Q, H) $ and ${\afu} \in \K$. In particular, this induces a relative $\bb{Z}_{4n_\K}$-grading on ${\mathcal{I}}_{H}(Q)$, and hence on $ CF_{\inst}^\bullet (Q) $, where $\bb{Z}_{4n_{\K}} \defeq \bb{Z}$ if $n_{\K} = \infty$.
		 	\end{proposition}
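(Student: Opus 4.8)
The plan is to construct $\mu_\inst$ as a spectral flow and to read off (\ref{mod4grading1000}) from the Atiyah--Singer index theorem together with the Chern--Weil formula (\ref{constantthings}). First I would observe that for every $\afa \in \A(Q)$ the extended Hessian $\D_\afa$ of (\ref{extendedhess}) is a formally self-adjoint, first-order elliptic operator on $\Omega^1(Y, Q(\frak{g})) \oplus \Omega^0(Y, Q(\frak{g}))$: the off-diagonal entries $-d_\afa$ and $-d_\afa^*$ are mutually formally adjoint, and $*d_\afa - *dX_\afa$ is self-adjoint (it represents the $L^2$-Hessian of $\CS_H$ at $\afa$). So $\D_\afa$ induces a self-adjoint Fredholm operator on the relevant $L^2$-completion with discrete real spectrum, and by Hypothesis (H2) it is invertible whenever $\afa$ is $H$-flat. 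For $\afa_0, \afa_1 \in \A_\fl(Q, H)$ I would then choose any path $\afa \colon [0,1] \to \A(Q)$ from $\afa_0$ to $\afa_1$ and set $\mu_\inst(\afa_0, \afa_1)$ equal to the spectral flow of $s \mapsto \D_{\afa(s)}$, which is a well-defined integer since the endpoints are invertible. Because $\A(Q)$ is an affine space, any two such paths are homotopic rel endpoints, and spectral flow is invariant under homotopies keeping the (invertible) endpoints fixed; hence $\mu_\inst$ depends only on the pair. Additivity of spectral flow under concatenation yields the cocycle identity $\mu_\inst(\afa_0, \afa_1) + \mu_\inst(\afa_1, \afa_2) = \mu_\inst(\afa_0, \afa_2)$, which is precisely the statement that $\mu_\inst$ is a relative $\bb{Z}$-grading. (Here I use that $\afu^* \afa$ is again $H$-flat for $\afu \in \K$: since $H$ is $\K$-invariant, $X_H$ is $\K$-equivariant, so $F_{\afu^* \afa} = \mathrm{Ad}(\afu^{-1}) F_\afa = \mathrm{Ad}(\afu^{-1}) X_H(\afa) = X_H(\afu^* \afa)$.)

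To prove (\ref{mod4grading1000}), the cocycle identity reduces the claim to $\mu_\inst(\afa, \afu^* \afa) = 4 \deg(\afu)$ for $\afa \in \A_\fl(Q, H)$, $\afu \in \K$. I would compute this spectral flow along the straight-line path $\afa_s = \afa + s(\afu^* \afa - \afa)$: the associated temporal-gauge connection $\afA$ on $[0,1] \times Q$ with $\afA|_{\{ s \} \times Y} = \afa_s$ descends, via $\afu$, to a connection on the mapping-torus bundle $Q_\afu \to S^1 \times Y$. By the standard identification of spectral flow with a Fredholm index, $\mu_\inst(\afa, \afu^* \afa)$ equals (up to a universal sign) the index of the linearized perturbed anti-self-duality operator over $\bb{R} \times Y$ asymptotic to $\afa$ and $\afu^* \afa$; gluing the two ends by $\afu$ converts this into the index of the analogous elliptic operator over the closed $4$-manifold $S^1 \times Y$ built from $\afA$ on $Q_\afu$, because the perturbation term is compact (so drops out of the index) and the Atiyah--Patodi--Singer $\eta$-invariant contributions at $\pm\infty$ cancel, being gauge-invariant functions of the single connection $\afa$. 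The Atiyah--Singer theorem then evaluates this index as $2 \langle q_4(Q_\afu), [S^1 \times Y] \rangle$ plus a term proportional to $\chi(S^1 \times Y) + \sigma(S^1 \times Y)$; but $\chi$ and $\sigma$ both vanish for $S^1 \times Y^3$, while $\langle q_4(Q_\afu), [S^1 \times Y] \rangle = \proj_{H^3} q_4(Q_\afu) = 2 \deg(\afu)$, so the index is $4 \deg(\afu)$. Alternatively one can bypass the global index formula and compute the index density by Chern--Weil: the $\eta$-terms cancel as before and the bulk integral reduces, using (\ref{constantthings}), to $\int_{[0,1] \times Y} \langle F_\afA \wedge F_\afA \rangle = 2 \int_I \int_Y \langle F_{\afa_s} \wedge \partial_s \afa_s \rangle = (8\pi^2 \kappa_r / r)\,\deg(\afu)$, which together with $\langle \cdot, \cdot \rangle = -\kappa_r \mathrm{tr}(\cdot\,\cdot)$ and the trace identity $\mathrm{tr}_{\mathrm{ad}} = 2r\,\mathrm{tr}$ on $\frak{su}(r)$ again produces the coefficient $4$.

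Finally, since the degree is a group homomorphism $\deg \colon \K \to \bb{Z}$ by (\ref{constantthings}), its image is $n_\K \bb{Z}$ (with the convention that this is $\{ 0 \}$, and $n_\K = \infty$, when $\deg$ vanishes on $\K$). Combining this with (\ref{mod4grading1000}) and the cocycle identity shows that for $\afu_0, \afu_1 \in \K$ one has $\mu_\inst(\afu_0^* \afa_0, \afu_1^* \afa_1) - \mu_\inst(\afa_0, \afa_1) = 4(\deg(\afu_1) - \deg(\afu_0)) \in 4 n_\K \bb{Z}$, so $\mu_\inst$ descends to a well-defined relative $\bb{Z}_{4 n_\K}$-grading on ${\mathcal{I}}_H(Q) = \A_\fl(Q, H) / \K$, hence on $CF_\inst^\bullet(Q)$. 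I expect the main obstacle to be bookkeeping rather than conceptual: reconciling the various sign and normalization conventions --- the chosen sign of spectral flow, the orientation of $S^1 \times Y$ fixed in Section \ref{TopolicaAspectsOfPrincipalPU(r)-Bundles}, the normalization of $q_4$ relative to $\mathrm{tr}$, and the index-theoretic coefficient $2$ in front of $q_4$ --- so that the formula comes out as precisely $+4 \deg(\afu)$. A secondary technical point is verifying that the linearized perturbed ASD operator on the cylinder satisfies the Fredholm and APS hypotheses (uniform ellipticity, decay of the perturbation), which follows from the instanton-perturbation axioms (a) and (b) but should be stated with care.
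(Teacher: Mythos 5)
Your proposal is correct and follows essentially the same route as the paper: both reduce (\ref{mod4grading1000}) to an Atiyah--Singer index computation on the closed $4$-manifold $S^1 \times Y$ via the mapping-torus bundle $Q_\afu$, both exploit the vanishing of the topological term $1 - b_1 + b^+$ for $S^1 \times Y$, and both identify the remaining characteristic-number term with $2q_4(Q_\afu)[S^1 \times Y] = 4\deg(\afu)$. The cosmetic difference is that you define $\mu_\inst$ by spectral flow of the extended Hessian (\ref{extendedhess}) and then translate to the index of the cylindrical ASD+Coulomb operator, whereas the paper defines $\mu_\inst$ directly as that Fredholm index and gets well-definedness from a compact-perturbation argument rather than from homotopy invariance of spectral flow; these are equivalent formulations and your cocycle argument via concatenation of spectral flow plays the same role as the paper's appeal to additivity of the index.

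The one substantive place where you gloss over what the paper treats as the main content is the evaluation of the coefficient $c(\PU(r))$. You write the Atiyah--Singer output as ``$2\langle q_4(Q_\afu), [S^1 \times Y]\rangle$'' and offer a back-of-the-envelope Chern--Weil computation using $\mathrm{tr}_{\mathrm{ad}} = 2r\,\mathrm{tr}$ to recover the $4$; this is fine as far as it goes. But since the point of the section is precisely to develop higher-rank instanton Floer theory where the $\PU(r)$ normalization is not in the standard references, the paper takes care to \emph{derive} $c(\PU(r)) = 2$ from first principles: it reduces the general formula (\ref{asindex}) to the case $X = S^4$ with a concrete $\PU(r)$-bundle $R = \mathrm{Ad}(R')$ with $c_2(R')[S^4] = 2r$, and then reads off the coefficient from the Atiyah--Hitchin--Singer dimension count $2(4r^2) - r^2 + 1$ for the moduli space on $R'$. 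So while your proof is correct, it tacitly assumes the $\PU(r)$ index coefficient is known, whereas the paper supplies the verification; if you want a self-contained argument you should either carry through the Chern--Weil normalization carefully (tracking the $\kappa_r$-dependence so that it cancels, as it must) or cite the AHS calculation as the paper does.
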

				
			We defer the proof of this proposition to the end of this section. Using this relative $\bb{Z}_{4n_{\K}}$-grading, we have a decomposition 
			
			$$CF^\bullet_{\inst}(Q) = \bigoplus_{k \in \bb{Z}_{4n_\K}} CF^k_\inst(Q) , \indent \indent CF^k_\inst (Q) \defeq \bigoplus\bb{Z}_2 \langle \left[a \right] \rangle,$$			
			where the sum on the right is over all $\left[{\afa}\right] \in {\mathcal{I}}_{{H}}(Q)$ with $\mu_{\inst}(\left[{\afa}_0\right], \left[{\afa}\right]) = k$, and $\left[{\afa}_0 \right] \in  {\mathcal{I}}_{{H}}(Q)$ is some fixed reference connection. It follows from Theorem \ref{kronsmagic} that each $CF^k_\inst (Q)$ is finitely generated.

			It is immediate from (\ref{mod4grading1000}) that each gauge transformation with non-zero degree acts freely on $\A_{\fl}(Q, H)$. However, it will be convenient to know that the subgroup $\K$ acts freely on the space of $H$-flat connections. We take this as a hypothesis as well. 
			
			\medskip
			
			\begin{itemize}
					\item[{\bfseries (H3)}] The subgroup $\K$ acts freely on $\A_\fl(Q, H)$.  
				\end{itemize}

\medskip
			
			The next step is to introduce the boundary operator 
			
			$$\partial_{\inst}: CF^k_\inst (Q) \longrightarrow CF^{k+1}_\inst (Q).$$ 
			It will take some time to develop the machinery necessary to define this explicitly. However, when we are done it will be given by a mod-2 count of isolated negative gradient trajectories of the perturbed Chern-Simons functional. These gradient trajectories are solutions ${\afa}: \mathbb{R} \rightarrow \A(Q)$ to the equation
			
			\begin{equation}
			\partial_s {\afa}  = -* (F_{\afa}- {X}({\afa})).
			\label{asd1}
			\end{equation}
			This equation is plainly invariant under the action of $\K$. Note that we may alternatively view any path $s \mapsto {\afa}(s)$ of connections as defining a single connection ${\afA} = {\afa}(s)$ on the bundle $\mathbb{R} \times Q\rightarrow \mathbb{R} \times Y$. Conversely, every connection on $\mathbb{R} \times Q$ has the form ${\afA} = {\afa}(s) + {\afp}(s) \:ds,$ where ${\afa}: \mathbb{R} \rightarrow \A(Q)$ and ${\afp}: \mathbb{R} \rightarrow \Omega^0(Y, Q(\frak{g}))$. The curvature decomposes into components as $F_{\afA} = F_{\afa} + ds \wedge  (\partial_s {\afa} - d_{\afa} {\afp}).$ We say that $\afA$ is {\bfseries $(g,H)$-ASD} or a {\bfseries $(g,H)$-instanton} if it satisfies
			
			\begin{equation}
			\partial_s {\afa} - d_{\afa} {\afp} + *(F_{\afa}- {X}({\afa})) = 0.
			\label{asd2}
			\end{equation}
			When $H = 0$, equation (\ref{asd2}) is exactly the ASD equation for the connection $\afA$ (see Section \ref{GaugeTheory}; the Hodge star is the one on $\bb{R} \times Y$ coming from the product metric $ds^2 + g$). Our immediate interest in (\ref{asd2}) is that it reduces to (\ref{asd1}) when ${\afA}$ is in temporal gauge $\afp = 0$. It follows that solutions to (\ref{asd1}) modulo $\mathrm{Maps}(\bb{R}, \G_0(Q))$ are identical to solutions of (\ref{asd2}) modulo $\G_0(\mathbb{R} \times Q)$, since every connection $\afA$ can be put into temporal gauge by an element of $\G_0(\bb{R} \times Q)$.

 The following are equivalent for a smooth $(g,H)$-instanton ${\afA}= {\afa}(s) + {\afp}(s)$ on $\mathbb{R} \times Q$:
			
			\begin{enumerate}
					\item[(i)] The connection ${\afA}$ has finite {\bfseries $H$-energy}:
					
							$$\YM_{H}({\afA}) \defeq \fracd{1}{2} \intd{\bb{R} \times Y} \left| F_{{\afa}(s)} - {X}({\afa}(s)) \right|^2 + \left| \partial_s {\afa}(s) - d_{{\afa}(s)}{\afp}(s) \right|^2 < \infty.$$
					\item[(ii)] The $s$-derivative $\partial_s {\afa}(s) - d_{{\afa}(s)}{\afp}(s)$ decays exponentially to zero as $s$ approaches $\pm \infty$:
					
							$$ \left\| \partial_s {\afa}(s) - d_{{\afa}(s)}{\afp}(s)\right\|_{L^2(Y)}  \leq Ce^{-\kappa \vert s \vert},$$
							for some constants $C, \kappa > 0$.
					\item[(iii)] The connection ${\afA}$ converges to $H$-flat connections ${\afa}^\pm \in \A_\fl(Q, H)$ at $\pm \infty$:
					
							$$\limd{s \rightarrow \pm \infty} {\afa}(s) = {\afa}^\pm, \indent \limd{s \rightarrow \pm \infty} {\afp}(s) = 0;$$
here the convergence is in ${\mathcal{C}}^\infty$ on $Y$. 
			\end{enumerate}
	When the last condition holds we will say that $\afA$ {\bfseries limits to $\afa^\pm$ at $\pm \infty$}. The equivalence of (i) and (iii) above imply that the group of gauge transformation in $\K$ act freely on the space of $(g, H)$-instantons. To state this precisely, we introduce the group
		
		$$\mathrm{Maps}_{\pm \infty}(\bb{R}, \K) \subseteq \mathrm{Maps}(\bb{R} , \G(Q)) = \G(\bb{R} \times Q)$$ 
		of smooth gauge transformation $\afU = \afu(\cdot)$ on $\bb{R} \times Q$ that converge at $\pm \infty$ to elements of $\K$. We assume the convergence at $\pm \infty$ is at least in ${\mathcal{C}}^1$ on $Y$.

		\begin{proposition}\label{freeactiononinstantons}
			Assume (H1-3). Let ${\afA} \in \A(\bb{R} \times Q)$ be a smooth $(g, H)$-instanton on $\bb{R} \times Q$ that has finite $H$-energy. Suppose ${\afU} \in \mathrm{Maps}_{\pm \infty}(\bb{R}, \K)$ is a smooth gauge transformation such that ${\afU}^* {\afA} = {\afA}.$ Then ${\afU} = e$ is the identity.
		\end{proposition}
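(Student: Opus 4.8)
The plan is to exploit the asymptotic structure of finite-energy $(g,H)$-instantons, established in the equivalences (i)--(iii) above, together with Hypothesis (H3) and an elementary parallel-transport argument on an associated endomorphism bundle.

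First I would read off what the fixed-point equation $\afU^*\afA = \afA$ says at the two ends. Since $\afA = \afa(s) + \afp(s)\,ds$ has finite $H$-energy, condition (iii) provides $H$-flat connections $\afa^\pm \in \A_\fl(Q,H)$ with $\afa(s) \to \afa^\pm$ in ${\mathcal C}^\infty(Y)$ as $s \to \pm\infty$. By definition of $\mathrm{Maps}_{\pm\infty}(\bb{R},\K)$ we may write $\afU = \afu(\cdot)$ with $\afu(s) \to \afu^\pm \in \K$, the convergence being at least ${\mathcal C}^1$ on $Y$. Evaluating $\afU^*\afA = \afA$ on the slice $\{s\}\times Y$ gives $\afu(s)^*\afa(s) = \afa(s)$ for every $s$; letting $s \to \pm\infty$ and using the ${\mathcal C}^1$-convergence of both $\afa(s)$ and $\afu(s)$ yields $(\afu^\pm)^*\afa^\pm = \afa^\pm$. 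Thus $\afu^\pm$ lies in the stabilizer in $\K$ of the $H$-flat connection $\afa^\pm$, and Hypothesis (H3) (freeness of the $\K$-action on $\A_\fl(Q,H)$) forces $\afu^\pm = e$. Hence $\afU$ converges to the identity gauge transformation at both ends.

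Next I would upgrade this to $\afU \equiv e$ on all of $\bb{R} \times Y$. Choosing a faithful matrix representation of $\PU(r)$ as in Remark \ref{representationremark}, the gauge transformation $\afU$ becomes a smooth section of the associated endomorphism bundle over $\bb{R} \times Y$, and the equation $\afU^*\afA = \afA$ becomes exactly the statement that $\afU$ is covariantly constant, $d_\afA \afU = 0$. Since the identity endomorphism is invariant under conjugation it defines a global $d_\afA$-parallel section $e$; therefore the difference $v \defeq \afU - e$ also satisfies $d_\afA v = 0$. As $d_\afA$ is compatible with the bundle metric, $d\,|v|^2 = 2\langle d_\afA v, v\rangle = 0$, so $|v|$ is constant on the connected manifold $\bb{R} \times Y$. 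By the previous paragraph $v \to 0$ as $s \to \pm\infty$, so this constant is zero, i.e. $\afU = e$.

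I do not expect a serious obstacle: the analytic weight has been front-loaded into the equivalences (i)--(iii) and into Hypotheses (H2)--(H3), so both steps above are soft. The only points that warrant a line of care are the passage to the limit $s \to \pm\infty$ in the slice equation, which is immediate from the stated ${\mathcal C}^1$-convergence, and the observation that one uses $\afU$ being genuinely smooth so that $v$ is continuous and the ``constant norm'' conclusion holds pointwise.
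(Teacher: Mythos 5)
Your proof is correct, and it takes a genuinely different route from the paper's. The paper's argument first reduces to temporal gauge and then proceeds slice-by-slice: having deduced $\afu^\pm = e$ at the ends from (H3), it invokes a forward reference to Lemma~\ref{14days} to conclude $\afu(s) = e$ for all $|s|$ large, and then runs an open-closed argument over $s \in \bb{R}$, relying on an auxiliary claim that $\afA$ (hence each $\afa(s)$) is irreducible together with the coercivity estimate (\ref{ppppd}) to propagate the vanishing. You instead observe that $\afU^*\afA = \afA$ is precisely the statement $d_\afA \afU = 0$ for $\afU$ viewed as a section of the conjugation bundle $P \times_G \mathrm{End}(\bb{C}^r)$, whose fiber metric is $d_\afA$-compatible; hence $|\afU - e|$ is constant on the connected $4$-manifold $\bb{R} \times Y$, and the asymptotic vanishing forces it to be identically zero. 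This bypasses temporal gauge, the forward reference to Lemma~\ref{14days}, the irreducibility claim for $\afA$, and the open-closed step, and it makes the underlying unique-continuation principle — a parallel section with a zero is identically zero — completely transparent. Both proofs ultimately rest on the same two external inputs: the characterization (iii) of finite-energy $(g,H)$-instantons (which uses (H2)) to identify $\afa^\pm$ as $H$-flat limits, and (H3) to kill the limiting gauge transformations $\afu^\pm$.
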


		\begin{proof}	
		To prove the proposition, it suffices to prove the following:

				\begin{itemize}
					\item[(i)] Assume (H1). Fix ${\afa}_\infty  \in \A(Q)$ and suppose this is irreducible. If ${\afA} \in \A(\bb{R} \times Q)$ is any connection limiting to ${\afa}_\infty$ at $\infty$ (or $- \infty$), then ${\afA}$ is also irreducible. 
					\item[(ii)] Suppose $H$ satisfies (H2), and $\K$ satisfies (H3). Let $\afa_\infty$ and $\afA$ be as in (i). If ${\afa}_\infty$ is $H$-flat, then the stabilizer of ${\afA}$ in $\mathrm{Maps}_{\pm \infty}(\bb{R}, \K)$ is trivial. 
				\end{itemize}

			Note that ${\afA}$ is irreducible if and only if there is some ${\afU} \in \G(\bb{R} \times Q)$ such that ${\afU}^* \afA$ is irreducible. In particular, we may assume $\afA = {\afa}(\cdot)$ is in temporal gauge. First we prove (i). Suppose $d_{\afA} {\afR} = 0$ for some 0-form ${\afR}$ on $\bb{R} \times Y$. We want to show ${\afR} = 0$. Write ${\afR} = {\afr}(\cdot )$ as a path of 0-forms on $Y$. Then writing $d_{\afA} \afR = 0$ in components gives 
			
			$$\partial_s {\afr}(s) = 0, \indent d_{\afa(s)} \afr(s) = 0$$
			for each $s \in \bb{R}$. The first condition says $\afr(s) = \afr_0$ is constant. Taking the limit as $s$ approaches $\infty$ implies that $d_{\afa_\infty} \afr_0 = 0$ and so $\afr_0 = 0$, as desired.
			
			To prove (ii), let $\afU = \afu(\cdot) \in \mathrm{Maps}_{\pm \infty}(\bb{R}, \K)$ and suppose this fixes $\afA = \afa(\cdot)$. This implies $\afu(s)$ fixes $\afa(s)$ for each $s \in \bb{R}$. By the assumptions on $\afU$, the limit $\lim_{s \rightarrow \infty} \afu(s) = \afu_\infty$ exists, and it is immediate that $\afu_\infty$ fixes $\afa_\infty$. Since $\afa_\infty$ is an $H$-flat connection, it follows from (H3) that $\afu_\infty = e$. In particular, $\afu(s) \in \G_0(Q)$ lies in the identity component for all $s$. By Lemma \ref{14days} below, it follows that $\afu(s) = e \in \G(Q)$ is the identity for all sufficiently large $s$. 
			
			To prove that $\afu(s) = e$ for all $s \in \bb{R}$, consider the set of $s \in \bb{R}$ such that $\afu(s) = e$. This set is clearly closed, and we just saw that it is non-empty. It suffices to show this set is also open. It follows from (i) that $\afa(s)$ is irreducible for each $s \in \bb{R}$. In general, a connection $\afa \in \A^{k,p}(Q)$ is irreducible if and only if it satisfies a bound of the form
			
			\begin{equation}\label{ppppd}
			\Vert \afr \Vert_{L^p(Y)} \leq C \Vert d_{\afa} \afr \Vert_{L^p(Y)}
			\end{equation}
			for all 0-forms ${\afr} \in \Omega^0(Y, Q(\frak{g}))$; this bound continues to hold for all connections near an irreducible $\afa$. Suppose there is some $s_0 \in \bb{R}$ such that $\afu(s_0) = e$. We want to show $\afu(s) = e$ for all $s$ near $s_0$. Using the exponential map for the gauge group, we can write $\afu(s) = \exp(\afr(s)),$ at least when $\vert s - s_0 \vert$ is small. Here, $\afr(s)$ is a path of 0-forms on $Y$ with values in $Q(\frak{g})$. Now differentiate the identity $\afu(s)^* \afa(s) = \afa(s)$ to get $d_{\afa(s)}  \afr(s) = 0.$ By (\ref{ppppd}), this implies $\afr(s) = 0$, and hence $\afu(s) = e$, for all $s$ sufficiently close to $s_0$. 
			\end{proof}

			Fix two $H$-flat connections ${\afa}^\pm \in \A_{\fl}(Q, H)$. Our immediate goal now is to define the moduli space ${\Mtwo}(\left[{\afa}^-\right], \left[{\afa}^+\right]) $ of instanton trajectories from $\left[ \afa^-\right]$ to $\left[ \afa^+ \right]$. This is the primary geometric object used to define the Floer boundary operator $\partial_\inst$. Intuitively, ${\Mtwo}(\left[{\afa}^-\right], \left[{\afa}^+\right]) $ can be viewed as the space
						
			\begin{equation}\label{wantmod}
			\left. \left\{{\afA}\in \A(\bb{R} \times Q)\: \left| \: (\ref{asd2}), \: \limd{s \rightarrow \pm \infty}\afA\vert_{\left\{s \right\} \times Y} \in \left[ {\afa}^\pm \right]\right. \right\} \right\slash \mathrm{Maps}_{\pm \infty}(\bb{R}, \K)
			\end{equation}			
			where $\A(\bb{R} \times Q)$ denotes the space of \emph{smooth} connections, and $\mathrm{Maps}_{\pm \infty}(\bb{R}, \K)$ is as defined above Proposition \ref{freeactiononinstantons}. However, to specify a smooth structure on the space ${\Mtwo}(\left[{\afa}^-\right], \left[{\afa}^+\right]) $ we need to work with suitable function theoretic completions of these spaces. Due to the non-compactness of $\bb{R} \times Y$, this is not as straightforward as one might hope. As a consequence, we adopt the following more circuitous approach. 
			
			Fix representatives $\afa^\pm \in \left[ \afa^\pm \right]$. Also fix representatives ${\mathcal{U}} = \left\{ \afu_0, \afu_1, \ldots \right\} \subset \K$, one for each element of $\K / \G_0(Q)$. That is, ${\mathcal{U}}$ is chosen so that the inclusion map ${\mathcal{U}} \hookrightarrow \K$ descends to a bijection 
			
			\begin{equation}\label{kacts}
			{\mathcal{U}} \cong \K / \G_0(Q).
			\end{equation}
			The bijection (\ref{kacts}) imposes a group structure on ${\mathcal{U}}$. (The gauge theoretic product of two elements of ${\mathcal{U}}$ may not be in ${\mathcal{U}}$, but this product is always \emph{homotopic} to a unique element of ${\mathcal{U}}$; this is the group structure determined by (\ref{kacts}).) Next, recall the Banach manifolds $\A^{k, p}(\afa, \afa') \subset \A^{k, p}(\bb{R} \times Q)$ and $\G_c^{k+1, p}(\bb{R} \times Q) \subset \G_{loc}^{k+1, p}(\bb{R} \times Q)$ from (\ref{a0}) and (\ref{g0}), respectively. For $\afa, \afa' \in \A_\fl(Q, H)$, define the following auxiliary space

			$${{\mathcal{N}}}(\afa, \afa') \defeq \left. \left\{\Big.{\afA} \in \A^{k, p}\left(\afa, \afa'\right)\: \Big|\: \left(\ref{asd2}\right) \: \right\} \right\slash \G_c^{k+1, p}(\bb{R} \times Q).$$
			We will denote the elements of the space by $\left[ \afA \right]_{\G_c}$. Then the group ${\mathcal{U}}$ acts on the disjoint union 
			
			$$	\bigsqcup_{\afu_i, \afu_j \in {\mathcal{U}}} \: {{{\mathcal{N}}}}(\afu_i^*\afa^- , \afu_j^* \afa^+)$$ 
			via the isomorphism (\ref{kacts}), where we are viewing elements of ${\mathcal{U}}$ as gauge transformations on $\bb{R} \times Q$ that are constant in the $\bb{R}$-direction. Moreover, this action is free by Proposition \ref{freeactiononinstantons}. Finally, we set
			
			$${\Mtwo}(\left[{\afa}^-\right], \left[{\afa}^+\right]) \defeq \left. \left(\bigsqcup_{\afu_i, \afu_j \in {\mathcal{U}}} \: {{{\mathcal{N}}}}(\afu_i^*\afa^- , \afu_j^* \afa^+) \right) \right\slash {\mathcal{U}} .$$
		At this point, there are three matters that need to be addressed: 
		
		\begin{itemize}
			\item[(i)] The dependence of ${\Mtwo}(\left[{\afa}^-\right], \left[{\afa}^+\right])$ on the choices of representatives $\afa^\pm$ and ${\mathcal{U}}$;  
			\item[(ii)] The sense in which ${\Mtwo}(\left[{\afa}^-\right], \left[{\afa}^+\right]) $ is a smooth manifold.
		\end{itemize}
		We begin with (i). Suppose $\afb^\pm \in \left[ \afa^\pm \right]$ are two other choices of representatives. Likewise, suppose ${\mathcal{V}} = \left\{ \afv_k \right\}_k$ is another set of representatives of $\K/ \G_0(Q)$. We want to show that the spaces
		
		$$\left. \left(\bigsqcup_{i , j} \: {{{\mathcal{N}}}}(\afu_i^* \afa^- , \afu_j^* \afa^+) \right) \right\slash {\mathcal{U}}, \indent \textrm{and} \indent \left. \left(\bigsqcup_{k , l} \: {{{\mathcal{N}}}}(\afv_k^*\afb^- , \afv_l^* \afb^+) \right) \right\slash {\mathcal{V}}$$		
		are canonically identified. By assumption, we can write $\afb^\pm = \afu_\pm^* \afa^\pm$ for some $\afu_\pm \in \K$. Note that the dependence of the space $\sqcup_{i, j} {{{\mathcal{N}}}}(\afv_k^*\afb^- , \afv_l^* \afb^+)$ on the representatives is just through the set $\left\{ \afv_k^* \afb^\pm \right\}_k.$ Since $\afv_k^* \afb^-  = (\afu_- \afv_k)^* \afa^-$, by replacing $\afv_k$ with $\afu_- \afv_k$, we may suppose $\afu_-  = e$ is the identity. Using the actions of ${\mathcal{U}}$ and ${\mathcal{V}}$, it suffices to show that, for each $j$, there is some $l$ for which the spaces $ {{{\mathcal{N}}}}(\afa^- , \afu_j^* \afa^+),$ and $ {{{\mathcal{N}}}}(\afa^- , \afv_l^* \afb^+)$ are canonically identified. To see this, suppose we are given some $\afu_j \in {\mathcal{U}}$. Then there is a unique $\afv_l \in {\mathcal{V}}$ such that ${\afu}_j$ and $\afu_+ {\afv}_l$ are in the same component of $\K$. Let $w: \bb{R} \rightarrow \G_0(Q)$ be a path with $w(s) = e$ for $s < -1$ and $w(s) = \afu_j^{-1} \afu_+\afv_l$ for $s > 1$. This determines a gauge transformation $W$ on $\bb{R} \times Q$ in the obvious way, and hence a bijection $ {{{\mathcal{N}}}}(\afa^- , \afu_j^* \afa^+) \cong  {{{\mathcal{N}}}}(\afa^- , \afv_l^* \afb^+)$ defined by sending $ \left[ \afA \right]_{\G_c} $ to $\left[ W^* \afA \right]_{\G_c}$. This map is independent of the choice of path $w$ since any two choices differ by an element of $\G_c^{k+1, p}(\bb{R} \times Q)$, and this group acts by the identity. This resolves (i). 

Now we discuss (ii). By assumption, ${H}$ has been chosen so all ${H}$-flat connections are non-degenerate, and it follows that the defining equations for each ${{{\mathcal{N}}}}(\afu_i^* \afa^- , \afu_j^* \afa^+)$ are Fredholm. Indeed, their linearization at $\left[ {\afA}\right]_{\G_c} \in {{{\mathcal{N}}}}(\afu_i^* \afa^- , \afu_j^* \afa^+)$ is an operator $\D_{{\afA}, g, {H}}$ obtained by using ${H}$ to perturb the Fredholm operator $d_{\afA}^+ \oplus d_{\afA}^*$. This perturbation determined by $H$ is compact (at least when restricted to compact subsets of $\bb{R} \times Y$), so many of the Fredholm properties of the operators $\D_{{\afA}, g, {H}}$ and $d_{\afA}^+ \oplus d_{\afA}^*$ are the same. For example, they have the same index. (This relies on the fact that we considered the instanton equations on the space $\A^{k,p}(a, a')$, rather than, say, the much larger space $\A^{k, p}_{loc}(\bb{R} \times Q)$.)

			When $\D_{{\afA}, g, {H}}$ is onto, there is a neighborhood of $\left[{\afA}\right]_{\G_c}$ in ${{{\mathcal{N}}}}(\afu_i^* \afa^- , \afu_j^* \afa^+)$ that is a smooth manifold of dimension $\mathrm{Ind}(\D_{{\afA}, g, {H}}) = \mu_{\inst}(\afu_i^* \afa^-, \afu_j^* \afa^+)$; see the proof of Proposition \ref{gradinglemma}. We say that the pair $(g, {H})$ is {\bfseries regular (for instanton Floer theory)}, if all ${H}$-flat connections are non-degenerate, and if for all $i, j$, all $\left[{\afa}^\pm \right] \in {\mathcal{I}}_{{H}}(Q)$, and all $\left[ {\afA} \right]_{\G_c} \in {{{\mathcal{N}}}}(\afu_i^* \afa^- , \afu_j^* \afa^+)$, the operator $\D_{{\afA}, g, {H}}$ is onto. In Section \ref{CompatiblePerturbations} we will show that, given any metric $g$, there always exists some instanton perturbation $H$ so that $(g, H)$ is regular. For the remainder of this section we assume that this is the case. 
			
			\medskip
			
			\begin{itemize}
					\item[{\bfseries (H4)}] The pair $(g, H)$ is regular for instanton Floer theory.
			\end{itemize}
			\medskip
			\noindent 
Assuming (H4), it follows that $\sqcup_{i, j} {{{\mathcal{N}}}}(\afu_i^* \afa^- , \afu_j^* \afa^+)$ is a smooth manifold. By Proposition \ref{freeactiononinstantons}, the group ${\mathcal{U}} = \K/ \G_0(Q)$ acts freely on this space and so the quotient ${\Mtwo}(\left[{\afa}^-\right], \left[{\afa}^+\right])$ is indeed a smooth manifold with local dimension given by the mod $4n_\K$-reduction of $\mu_{\inst}(\afa^-,  \afa^+)$. This resolves (ii).

				\medskip			
			
			The space ${\Mtwo}(\left[{\afa}^-\right], \left[{\afa}^+\right])$ admits a free action of $\mathbb{R}$ by translation, and we set
			
			$$\widehat{\Mtwo}(\left[{\afa}^-\right], \left[{\afa}^+\right]) \defeq {\Mtwo}(\left[{\afa}^-\right], \left[{\afa}^+\right]) / \mathbb{R}.$$
			These are the {\bfseries (unparametrized) instanton trajectories}. We denote by $\widehat{\Mtwo}_0(\left[{\afa}^-\right], \left[{\afa}^+\right]) $ the zero-dimensional component of $\widehat{\Mtwo}(\left[{\afa}^-\right], \left[{\afa}^+\right]) $; this may be empty. Theorem \ref{kronsmagic} implies that $\widehat{\Mtwo}_0(\left[{\afa}^-\right], \left[{\afa}^+\right])$ is a finite set. We define $\#_Q({\afa}^-, {\afa}^+)$ to be the mod-2 count of its elements. Finally, we define $\partial_{\inst}: CF_{\inst}^\bullet(Q) \rightarrow  CF_{\inst}^\bullet(Q)$ to be the degree 1 operator given by
			
			$$\partial_{\inst} \langle \left[{\afa}^-\right] \rangle = \sum  \#_Q({\afa}^-, {\afa}^+) \langle \left[{\afa}^+\right] \rangle,$$
			where the sum is over all $\left[{\afa}^+\right] \in {\mathcal{I}}_H(Q)$ with $\mu_\inst({\afa}^-, {\afa}^+) = 1$. Now we can state Floer's main theorem.

			\begin{theorem}\label{instmod2}	
			Assume (H1-4). Then $\partial_{\inst}^2 = 0$, and so
			
			$${HF}_{\inst}^{\bullet}(Q)^\K \defeq \fracd{\ker \partial_{\inst}}{\mathrm{im}\: \partial_{\inst}}$$
			is well-defined. This abelian group inherits a relative $\bb{Z}_{4n_\K}$-grading from the grading on $CF^\bullet_\inst(Q)$. Furthermore, ${HF}_{\inst}^\bullet (Q)^\K$ is independent of the choice of regular pair $(g, {H})$, up to isomorphism of relatively $\bb{Z}_{4n_\K}$-graded abelian groups.
			\end{theorem}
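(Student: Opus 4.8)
The plan is to run the standard Floer package, with the subgroup $\K$ and the auxiliary spaces ${\mathcal{N}}(\afu_i^*\afa^-, \afu_j^*\afa^+)$ constructed above playing the usual roles. The core of the theorem is $\partial_\inst^2 = 0$. Fix $\left[\afa^\pm\right] \in {\mathcal{I}}_H(Q)$ with $\mu_\inst(\afa^-, \afa^+) = 2$, so that the one-dimensional component $\widehat{\Mtwo}_1(\left[\afa^-\right], \left[\afa^+\right])$ is a $1$-manifold, and I would analyze its ends. First comes compactness up to breaking: the $H$-energy is bounded on $\widehat{\Mtwo}_1$ --- it equals the drop of $\CS_H$ across the trajectory, since (\ref{asd2}) is the negative gradient flow of $\CS_H$ (compare (\ref{asd1})) --- so Theorem \ref{kronsmagic} applies on every compact subset of $\bb{R} \times Y$. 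Combined with the exponential convergence to $H$-flat connections at $\pm\infty$ (the equivalence of conditions (i)--(iii) preceding Proposition \ref{freeactiononinstantons}) and the usual energy bookkeeping, a sequence in $\widehat{\Mtwo}_1$ subconverges, away from finitely many points of energy concentration, to a broken trajectory possibly decorated with bubbles. An index/energy count then excludes bubbling in this dimension --- each bubble absorbs at least a fixed quantum of $H$-energy and, by the $\PU(r)$ index formula on $S^4$, lowers the Fredholm index by at least $4$ (mirroring the factor $4$ in (\ref{mod4grading1000}); compare the proof of Proposition \ref{gradinglemma}) --- and forces the limit to be a concatenation of exactly two nonconstant index-$1$ trajectories, i.e.\ an element of $\widehat{\Mtwo}_0(\left[\afa^-\right], \left[\afb\right]) \times \widehat{\Mtwo}_0(\left[\afb\right], \left[\afa^+\right])$ for some $\left[\afb\right] \in {\mathcal{I}}_H(Q)$.

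The complementary ingredient is a gluing theorem: given $\left[\afA_1\right] \in \widehat{\Mtwo}_0(\left[\afa^-\right], \left[\afb\right])$ and $\left[\afA_2\right] \in \widehat{\Mtwo}_0(\left[\afb\right], \left[\afa^+\right])$, I would preglue along a neck of length $T \gg 0$ and run a Newton iteration --- its convergence resting on surjectivity of the linearized operator $\D_{\afA, g, H}$ together with a $T$-uniform bound on its right inverse, both furnished by (H4) and the exponential decay at $\afb$ --- to produce, for all large $T$, a genuine element of $\widehat{\Mtwo}_1(\left[\afa^-\right], \left[\afa^+\right])$, hence a half-open collar; and every end of $\widehat{\Mtwo}_1$ arises uniquely this way. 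Thus $\widehat{\Mtwo}_1(\left[\afa^-\right], \left[\afa^+\right])$ compactifies to a compact $1$-manifold with boundary $\bigsqcup_{\left[\afb\right]} \widehat{\Mtwo}_0(\left[\afa^-\right], \left[\afb\right]) \times \widehat{\Mtwo}_0(\left[\afb\right], \left[\afa^+\right])$, and counting its boundary points modulo $2$ gives $\langle \partial_\inst^2 \langle\left[\afa^-\right]\rangle, \left[\afa^+\right]\rangle = 0$. That $\partial_\inst$ raises the grading by $1$ follows at once from Proposition \ref{gradinglemma}: it counts elements of the zero-dimensional piece of $\widehat{\Mtwo}$, which sits inside a $\Mtwo$ of dimension $\mu_\inst$, so modding out the free $\bb{R}$-action forces $\mu_\inst = 1$, and (\ref{mod4grading1000}) makes this pass to the relative $\bb{Z}_{4n_\K}$-grading on ${\mathcal{I}}_H(Q)$. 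Hence $HF^\bullet_\inst(Q)^\K$ is a well-defined, relatively $\bb{Z}_{4n_\K}$-graded abelian group.

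For independence of the regular pair $(g, H)$ I would use continuation maps. Given regular pairs $(g^0, H^0)$ and $(g^1, H^1)$, choose (via the Sard--Smale argument of Section \ref{CompatiblePerturbations}) a generic smooth path $\{(g^\tau, H^\tau)\}_{\tau \in [0,1]}$ through $\K$-invariant instanton data making the associated parametrized moduli spaces transverse; choose a monotone $\beta: \bb{R} \to [0,1]$ with $\beta \equiv 0$ near $-\infty$ and $\beta \equiv 1$ near $+\infty$; and let $\Phi : CF^\bullet_\inst(Q)_{(g^0, H^0)} \to CF^\bullet_\inst(Q)_{(g^1, H^1)}$ be the mod-$2$ count of isolated solutions of the resulting $s$-dependent (no longer translation-invariant) ASD equation, organized into $\K$-equivariant moduli spaces just as above. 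The same compactness-plus-gluing package (the no-bubbling count still applies, since only moduli spaces of dimension $\le 1$ enter) shows $\Phi$ is a degree-$0$ chain map, hence preserves the relative $\bb{Z}_{4n_\K}$-grading; reversing the homotopy produces a map $\Psi$ the other way, and a one-parameter family of homotopies deforming the concatenation of the two to the constant homotopy yields chain homotopies $\Phi\Psi \simeq \mathrm{id}$ and $\Psi\Phi \simeq \mathrm{id}$, so $\Phi$ descends to a graded isomorphism on cohomology. The main obstacle throughout is the analysis behind the first step: the energy-quantization lower bound needed to kill bubbling in low dimensions, and the $T$-uniform neck estimates for the gluing. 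Both are delicate because the holonomy-perturbation term $X_H$ is only compact, not local --- so Theorem \ref{kronsmagic} yields only $W^{1,p}$, rather than $\mathcal{C}^\infty$, convergence away from bubble points --- and the elliptic estimates for $\D_{\afA, g, H}$ must be arranged to absorb this nonlocal term; the remaining Fredholm theory, transversality, and homological algebra are routine given the framework already in place.
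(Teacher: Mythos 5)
Your proposal is correct and follows the same route as the paper, which simply cites Floer (and Donaldson, Dostoglou--Salamon) and asserts that the argument carries over once irreducibility is in hand via (H1) and (H3); you have written out the standard package — compactness-with-breaking, energy quantization and index drop to exclude bubbles, gluing to identify the ends, and continuation maps — that those citations invoke. The only point worth flagging is cosmetic: the factor-of-$4$ index drop for a bubble on $S^4$ is really $c(\PU(r)) \cdot q_4^{\min} = 4r$ rather than the $4 = 2\,c(\PU(r))$ appearing in (\ref{mod4grading1000}) (the two coincide only through $c(\PU(r)) = 2$), but your lower bound of $4$ is still valid and more than enough to run the argument.
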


			The proof of Theorem \ref{instmod2} follows essentially as in \cite{Fl1}, where Floer considers the trivial $\SU(2)$-bundle over a homology 3-sphere. The key technical device needed to carry Floer's proof to our setting are hypotheses (H1) and (H3), which essentially say that there are no reducible flat connections for our bundles (in Floer's setting, the only reducible flat connection is the trivial connection, which can be easily avoided). See also \cite{Flinst}, \cite{DS} and \cite[Chapter 5]{Donfloer}.

					\begin{proof}[Proof of Proposition \ref{gradinglemma}]
			This proposition is well-known in the case $\PU(2) = \SO(3)$ (see \cite{Flinst}, \cite{DS2}, \cite{BD}), and essentially the same proof carries over to the more general case of $\PU(r)$. We include the details for convenience. We note that the perturbation $H$ only serves to ensure that the $H$-flat connections are non-degenerate. To simplify notation we assume $H= 0$ (see also Remark \ref{rem1}). 
			
			Let $\D_{\afA}$ be the operator 
			
			$$\D_{\afA} = d_{\afA}^+ \oplus d_{\afA}^*: \Omega^1(\bb{R} \times Q, Q(\frak{g})) \longrightarrow \Omega^+(\bb{R} \times Q, Q(\frak{g})) \oplus \Omega^0(\bb{R} \times Q, Q(\frak{g})).$$		
			Here ${\afA}\in \A(\bb{R} \times Q)$ is any connection that converges to non-degenerate flat connections $a^\pm$ at $\pm \infty$, and $d_{\afA}^+$ is the composition 
			
			$$d_{\afA}^+: \Omega^1 \stackrel{d_{\afA}}{\longrightarrow} \Omega^2 \longrightarrow \Omega^+,$$ 
			where the second arrow is the $L^2$-orthogonal projection to the space of anti-self dual 2-forms. (It is important here that ${\afA}$ is a connection on a pull-back bundle $\bb{R} \times Q$ and not on an arbitrary bundle over $\bb{R} \times Y$.) The operator $\D_{\afA}$ is the linearization of the ASD operator ${\afA} \mapsto F^+_{\afA}$ coupled with the Coulomb gauge fixing operator ${\afA} \mapsto d_{\afA}^*({\afA}_0 - {\afA})$. The assumption on the non-degeneracy of the $\afa^\pm$ implies that, after passing to suitable Banach space completions, $\D_{\afA}$ becomes a Fredholm operator. In particular, its index $\mathrm{Ind}(\D_{\afA}, {\afa}^-, {\afa}^+)$ is well-defined. 

\medskip

\noindent \emph{Claim: The index $\mathrm{Ind}(\D_{\afA}, {\afa}^-, {\afa}^+)$ is independent of the choice of connection ${\afA}$ on $\bb{R} \times Q$, as well as the choice of metric on $Y$ used to define the splittings above.}

\medskip

We prove independence of the choice of connection; independence of the choice of metric is similar. As a first case, suppose that ${\afA}$ and ${\afB}$ are connections that agree off of a compact set $K \subset \bb{R} \times Y$. Then the difference $\D_{\afA} - \D_{\afB}$ is an operator given by multiplication by a section of a bundle over $\bb{R} \times Y$, and this section has support in $K$. It follows from the compact embedding $W^{1,2}(K) \hookrightarrow L^2(K)$ that this difference is a compact operator. The index of a Fredholm operator is unchanged under compact perturbations, so $\mathrm{Ind}(\D_{\afA}, {\afa}^-, {\afa}^+) = \mathrm{Ind}(\D_{\afB}, {\afa}^-, {\afa}^+)$ in this case. 
 
 More generally, if ${\afA}$ and $ {\afB}$ do not agree on any compact set, then we can reduce to the previous case as follows. Since these connections have the same limits at $\pm \infty$, given any $\eps > 0$ one can find a compactly supported 1-form ${\afV}$ so that $\D_{{\afA}} - \D_{{\afB} + {\afV}}$ has norm less than $\eps$ (multiply the difference ${\afA} - {\afB}$ by a bump function with suitably large support). The previous case shows that the operators $\D_{{\afB} + {\afV}}$ and $\D_{{\afB}}$ have the same index. To see that $\D_{{\afB} + {\afV}}$ has the same index as $\D_{{\afA}}$, use the fact that if $F: X \rightarrow Y$ is a Fredholm operator, then there is some $\eps > 0$ such that if $L: X \rightarrow Y$ is any bounded linear operator with norm less than $\eps$, then $F +L$ is Fredholm and has the same index as $F$. This proves the claim.
 
 \medskip

			Set
			
			$$\mu_\inst\left({\afa}^- , {\afa}^+  \right) \defeq  \mathrm{Ind}(\D_{\afA}, {\afa}^-, {\afa}^+).$$
		Then $\mu_\inst$ is additive in each component, and so is in fact a relative $\bb{Z}$-grading. It therefore suffices to show $\mathrm{Ind}(\D_{\afA}, {\afa}, {\afu}^*{\afa})  = 4 \deg({\afu})$, where ${\afa} \in \A_\fl(Q)$ is a flat connection, ${\afu} \in \K$ is a gauge transformation, and ${\afA}$ limits to ${\afa}$ and ${\afu}^*{\afa}$.

	\begin{remark}\label{rem1}
			(a) Recall we have only assumed that the perturbation $H$ is invariant under the group $\K$, and not necessarily under the full group $\G(Q)$. In particular, if ${\afu}$ is a gauge transformation not in $\K$, it could be the case that ${\afu}^*{\afa}$ is \emph{not} $H$-flat, even if ${\afa} \in \A_\fl(Q, H)$. Hence, the extended Hessian at ${\afu}^*{\afa}$ may be degenerate and this could result in the failure of $\D_{\afA}$ to be Fredholm. This being said, the proof we give below will show that $\mathrm{Ind}(\D_{\afA}, {\afa}, {\afu}^*{\afa})  = 4 \deg({\afu})$ for any ${\afu}$ in the larger group $\G(Q)$, provided one knows that the operator $\D_{{\afA}}$ is Fredholm when ${\afA}$ limits to ${\afa}$ and ${\afu}^*{\afa}$. 
			
			\medskip
			
			(b) In general, suppose $G$ is a simple Lie group and consider a $G$-bundle $R \rightarrow X$ over a closed 4-manifold. Then the index of the linearized ASD operator at a connection ${\afB}$ on $R$ can by computed using the Atiyah-Singer index formula to give
		
		\begin{equation}\label{asindex}
		\mathrm{Ind}(\D_{\afB}) = c(G)\kappa(R) - \dim(G)\left(1 - b_1(X) + b^+(X) \right).
		\end{equation}%This agrees with Kronheimer because, for him, $\kappa = -p_1(P(\frak{g})) = \frac{1}{2r} q_4(P)$.
		See, for example, \cite[Equation 7.1.3]{DK}. Here $\kappa(R)$ is a characteristic number for $R$, and $c(G)$ is a normalizing constant depending only on $G$. For example, when $G = \SU(r)$, one typically takes $\kappa(R) = c_2(R)\left[X \right]$ to be the 2nd Chern number. Then one can show $c(\SU(r)) = 4r$. 
		
		When $G = \PU(r)$, we will have $\kappa(R) = q_4(R)\left[X \right]$ is the generalized Pontryagin number from Section \ref{TopolicaAspectsOfPrincipalPU(r)-Bundles}. That this is the correct choice of $\kappa$ can be seen as follows: The connection $\afB$ can be equivalently viewed as a connection on the complexified adjoint bundle $R(\frak{g})_\bb{C}$. The structure group of this bundle reduces to $\SU(r)$ since $c_1 =0$ for complexified bundles, so by the previous paragraph we have $\kappa(R(\frak{g})_\bb{C}) = c_2(R(\frak{g})_\bb{C}) \left[ X \right] = q_4(R) \left[X \right]$, where the second equality is the definition of $q_4$. We will compute $c(\PU(r))$ for this choice of characteristic number in a moment (essentially the same argument can be used to show $c(\SU(r)) = 4r$). 
		
		We also note that when ${\afB}$ is an irreducible ASD connection, the value (\ref{asindex}) recovers the dimension near ${\afB}$ of the moduli space of irreducible ASD connections on $R$.
			\end{remark}

		Now we return to the situation where ${\afA}$ is a connection on $\bb{R} \times Q$ with flat limits ${\afa}$ and ${\afu}^*{\afa}$. We may view ${\afA}$ as being a connection on the bundle $Q_{\afu} \defeq I \times Q / (0, \afu(q)) \sim (1, q)$ over the closed manifold $S^1 \times Y$. Apply Remark \ref{rem1} (b) to the case
		
		$$G = \PU(r),\indent X = S^1 \times Y, \indent R = Q_{\afu}, \indent \mathrm{and} \indent {\afB} = {\afA}.$$
		Then by (\ref{asindex}) and the definition of the degree, we have
		
		$$\mathrm{Ind(\D_{\afA})} = c(\PU(r))\kappa(Q_{\afu}) = 2c(\PU(r)) \deg({\afu})$$
		since the term involving the Betti numbers vanishes for $X = S^1 \times Y$. We will be done if we can show $c(\PU(r)) = 2$. 
		
		To do this, we use (\ref{asindex}) again, but with $X = S^4$. Before specifying the $\PU(r)$-bundle $R$, we first note that the $\SU(r)$-bundles over $S^4$ are classified by their second Chern class. Let $R'$ denote the $\SU(r)$-bundle with $c_2(R')\left[S^4\right] = 2r$. Then we take $R \rightarrow S^4$ to be the adjoint bundle associated to $R'$. So $R$ is a $\PU(r)$-bundle with $q_4(R) \defeq c_2(\End(R')) =  2rc_2(R') = 4r^2.$ Atiyah-Hitchin-Singer show that $R'$ always admits an irreducible ASD connection ${\afB}$ \cite[Theorem 8.4]{AHS}. Moreover, in \cite[Table 8.1]{AHS}, the authors compute the dimension of the moduli space of irreducible ASD connections on the $\SU(r)$-bundle $R'$ to be $2(4r^2) - r^2 +1$. This moduli space has the same dimension as the moduli space of irreducible ASD connections on the associated $\PU(r)$-bundle $R$, and so it recovers the index. Combining this with (\ref{asindex}) gives 
		
		$$2(4r^2) - r^2 +1  =  \mathrm{Ind}(\D_{\afB}) = c(\PU(r))4r^2 - r^2 + 1$$
		and so it follows that $c(\PU(r)) = 2$.
		\end{proof}

			\subsection{Gradings} \label{Gradings}

				We continue to assume $Q \rightarrow Y$ is a bundle satisfying hypothesis (H1). In the previous section we defined a cohomology group $HF^\bullet_\inst(Q)^\K$ associated to a subgroup $\K \subseteq \G(Q)$ satisfying hypothesis (H3). In this section we work with the specific cases where $\K$ is $\G_0(Q), \ker \eta$ and $\G(\Sigma)$; here $\eta: \G(Q) \rightarrow H^1(Y, \bb{Z}_r)$ is the parity operator, and $\G(\Sigma)$ is the subgroup generated by $\G_0(Q)$ and the degree 1 gauge transformation from Proposition \ref{degprop} (b). In each case we will show that $\K$ satisfies hypothesis (H3), and that we obtain a relatively $\bb{Z}, \bb{Z}_{4r}$ and $\bb{Z}_4$-graded Floer theory in the respective cases (the latter case recovers the $\bb{Z}_4$-graded Floer group that is more common in the literature \cite{Flinst} \cite{BD} \cite{DS2}). Along the way, we also compare the associated Floer cohomology groups in the different cases, and show that they contain the same information. Throughout we assume $g$ and $H$ are chosen so hypotheses (H2) and (H4) hold.

				\subsubsection{A $\bb{Z}$-grading}\label{AZGrading} 
				
				Here we show that $\K = \G_0(Q)$ acts freely on the space $\A_\fl(Q, H)$. Once we have done this, it follows immediately that the induced grading on $HF^\bullet_\inst(Q)^{\G_0(Q)}$ is a relative $\bb{Z}$-grading, since each element of $\G_0(Q)$ has degree zero.

			If follows from (H2) that every $\afa \in \A_\fl(Q, H)$ is irreducible, and so (\ref{ppppd}) holds for all ${\afr} \in \Omega^0(Y, Q(\frak{g}))$. As a preliminary step, we prove the following.
			
			\begin{lemma}\label{12days}
			Assume (H2) and let $p > 3$. Then there are constants $\eps_0, C  > 0$ such that (\ref{ppppd}) holds for all ${\afr} \in \Omega^0(Y, Q(\frak{g}))$ and for all connections ${\afa}$ on $Q \rightarrow Y$ with $\Vert F_{\afa} - X({\afa}) \Vert_{L^p(Y)} \leq \eps_0.$
				\end{lemma}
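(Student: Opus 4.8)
I would prove this by a compactness/contradiction argument, with Uhlenbeck's theorem doing the heavy lifting. The first ingredient is a quantitative form of the stability statement recorded just before the lemma: if $\afa$ is irreducible, so that (\ref{ppppd}) holds for some constant $C_{\afa}$, and $\afa'$ is any connection with $\Vert \afa' - \afa \Vert_{L^\infty} \le (2cC_{\afa})^{-1}$, where $c$ is the constant in the pointwise estimate $|[\mu\wedge\afr]| \le c\,|\mu|\,|\afr|$, then from $d_{\afa'}\afr = d_{\afa}\afr + [\,(\afa'-\afa)\wedge\afr\,]$ one gets $\Vert \afr\Vert_{L^p} \le 2C_{\afa}\Vert d_{\afa'}\afr\Vert_{L^p}$; that is, (\ref{ppppd}) persists on the $L^\infty$-ball of radius $(2cC_{\afa})^{-1}$ about $\afa$, with constant $2C_{\afa}$. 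Also note that (\ref{ppppd}) is gauge invariant: since the inner product on $\frak{g}$ is $\mathrm{Ad}$-invariant all $L^p$ norms of sections of $Q(\frak{g})$ are unchanged under the gauge action, and the gauge action intertwines $d_{\afa}$ with $d_{\afu^*\afa}$. So it suffices to show that every connection $\afa$ with $\Vert F_{\afa} - X(\afa)\Vert_{L^p}$ small enough is, after a gauge transformation, within $L^\infty$-distance $(2cC)^{-1}$ of a fixed irreducible connection having Poincaré constant $C$.

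Suppose this fails. Then there is a sequence of connections $\afa_n$ with $\Vert F_{\afa_n} - X(\afa_n)\Vert_{L^p} \le 1/n$ and $0$-forms $\afr_n$ with $\Vert \afr_n\Vert_{L^p} = 1$ and $\Vert d_{\afa_n}\afr_n\Vert_{L^p} \le 1/n$. Since $X = X_H$ is $L^\infty$-bounded (condition (a) of Theorem~\ref{kronsmagic}), the curvatures $F_{\afa_n}$ are uniformly $L^p$-bounded; because $p > 3 = \dim Y$ we are strictly above the critical exponent $\tfrac12\dim Y$, so Uhlenbeck's compactness theorem applies with empty bubble set: after passing to a subsequence there are gauge transformations $\afu_n$ with $\afu_n^*\afa_n \to \afa_\infty$ in $W^{1,p}(Y)$ for some connection $\afa_\infty$ on $Q$ (the bundle type is preserved in the limit since $\PU(r)$-bundles over a closed $3$-manifold are classified by $t_2 \in H^2(Y,\bb{Z}_r)$, which is Uhlenbeck-stable). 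Passing to the limit in the equation, using continuity of $X_H$ on $\A^{1,p}(Q)$ and $F_{\afu_n^*\afa_n} - X(\afu_n^*\afa_n) = \mathrm{Ad}(\afu_n^{-1})(F_{\afa_n} - X(\afa_n)) \to 0$ in $L^p$, gives $F_{\afa_\infty} = X(\afa_\infty)$, so $\afa_\infty$ is $H$-flat; by hypothesis (H2) it is irreducible, hence satisfies (\ref{ppppd}) with some constant $C_\infty$.

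To finish, note $W^{1,p}(Y)\hookrightarrow C^0(Y)$ since $p > 3$, so $\afu_n^*\afa_n \to \afa_\infty$ uniformly; for $n$ large, $\Vert \afu_n^*\afa_n - \afa_\infty\Vert_{L^\infty} \le (2cC_\infty)^{-1}$, and by the first paragraph (\ref{ppppd}) holds for $\afu_n^*\afa_n$ with constant $2C_\infty$. Applying this to $\afs_n \defeq \mathrm{Ad}(\afu_n^{-1})\afr_n$, which satisfies $\Vert \afs_n\Vert_{L^p} = 1$ and $d_{\afu_n^*\afa_n}\afs_n = \mathrm{Ad}(\afu_n^{-1})(d_{\afa_n}\afr_n)$, so that $\Vert d_{\afu_n^*\afa_n}\afs_n\Vert_{L^p} \le 1/n$, yields $1 \le 2C_\infty/n$, a contradiction for $n$ large. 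The one point I would write out carefully is the upgrade from the weak $W^{1,p}$ (and strong $C^0$) convergence supplied by Uhlenbeck's theorem to the strong $W^{1,p}$ convergence used to conclude $\afa_\infty$ is exactly $H$-flat; this is the standard elliptic bootstrap in a local Coulomb gauge relative to $\afa_\infty$, using that $p$ is subcritical. (If $X_H$ happens to be continuous in the $C^0$-topology, e.g. for holonomy perturbations, the weak convergence already suffices.) This is the main obstacle; everything else is routine.
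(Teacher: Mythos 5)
Your proposal is correct and takes essentially the same approach as the paper: a contradiction argument driven by (perturbed) Uhlenbeck compactness, which reduces matters to the $H$-flat limit connection, where non-degeneracy from (H2) supplies the contradiction. The one organizational difference is that the paper splits the argument into two contradiction steps (first a uniform constant over all $H$-flat connections, then an extension to connections with small $\Vert F_{\afa} - X(\afa)\Vert_{L^p}$), whereas you merge these by first isolating the quantitative $L^\infty$-stability of (\ref{ppppd}) — if $\Vert \afa' - \afa \Vert_{L^\infty} \le (2cC_{\afa})^{-1}$ then the constant at most doubles — and then running a single compactness/contradiction argument. This buys a slightly cleaner logical structure and makes explicit the mechanism (proximity in $L^\infty$) by which the estimate transfers from the limit to the tail of the sequence. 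Your concern about upgrading convergence to conclude $\afa_\infty$ is $H$-flat is reasonable but somewhat overcautious: weak $W^{1,p}$ convergence together with the compact embedding $W^{1,p}(Y)\hookrightarrow C^0(Y)$ for $p>3$ already gives weak $L^p$ convergence of $F_{\afu_n^*\afa_n}$, and hypothesis (b) on $X_H$ from Theorem~\ref{kronsmagic} gives $L^p$ (sub)convergence of $X(\afu_n^*\afa_n)$, so the limit is $H$-flat without bootstrapping to strong $W^{1,p}$.
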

				
				\begin{proof} The result obviously holds for $H$-flat connections ${\afa}$, with a constant $C = C_{\afa}$ possibly depending on ${\afa}$. We first show that the constant $C$ can actually be chosen to be independent of the $H$-flat connection ${\afa}$. If not, then one could find sequences $\left\{{\afr}_n\right\}_n$ of 0-forms and $\left\{{\afa}_n\right\}_n$ of $H$-flat connections with
	
	\begin{equation}\label{jjskklla}
	\Vert {\afr}_n \Vert_{L^2} = 1, \indent \Vert d_{{\afa}_n} {\afr}_n \Vert_{L^2} \rightarrow 0.
	\end{equation}
	The bound (\ref{ppppd}) is gauge invariant, so by the perturbed version of Uhlenbeck compactness, we may assume that the ${\afa}_n$ converge strongly in $L^\infty$ to an $H$-flat connection ${\afa}_\infty$, after possibly passing to a subsequence. Moreover, this convergence of the ${\afa}_n$ combines with the assumptions on the ${\afr}_n$ to imply that the ${\afr}_n$ are uniformly bounded in $W^{1,2}$ (with the derivatives defined using $d_{{\afa}_\infty}$). So by passing to a further subsequence, we may assume the ${\afr}_n$ converge weakly in $W^{1,2}$ and hence strongly in $L^2$ to some ${\afr}_\infty$. This gives $1 = \Vert {\afr}_\infty \Vert_{L^2},$ and $d_{{\afa}_\infty} {\afr}_\infty = 0,$ which contradicts (\ref{ppppd}) applied to the flat connection ${\afa} = {\afa}_\infty$, and so the constant $C$ is independent of the choice of $H$-flat connection.
	
	Next, we show that the estimate (\ref{ppppd}) continues to hold for any connection ${\afa}$ with $F_{\afa} - X_{\afa}$ sufficiently $L^p$-small. This follows easily by the same kind of contradiction argument: If not, then there are ${\afa}_n$ and ${\afr}_n$ with (\ref{jjskklla}), and $\Vert F_{{\afa}_n} - X({\afa})\Vert_{L^p} \rightarrow 0.$ Then $\left\{{\afa}_n\right\}_n$ has a subsequence that converges strongly in $L^\infty$, modulo gauge equivalence, to some limiting $H$-flat connection, and the result follows exactly as before. (We use $p > 3$ to obtain a compact embedding $W^{1,p} \hookrightarrow L^\infty$.) 
	\end{proof}

		Hypothesis (H3) for the group $\K = \G_0(Q)$ is a special case of the following.
			
			\begin{lemma}\label{14days}
			Assume (H2) and let $p > 3$. There is some $\eps_0 > 0$ such that if ${\afa}$ is any connection on $Q \rightarrow Y$ with 
			
			\begin{equation}\label{3445}
			\Vert F_{\afa} - X(\afa)\Vert_{L^p(Y)} < \eps_0,
			\end{equation}
			then the stabilizer of ${\afa}$ in $\G_0(Q)$ is trivial. Moreover, the set ${\mathcal{I}}_H(Q)$ is naturally equipped with the structure of a smooth manifold of dimension zero, and it is non-compact whenever it is not empty.
			\end{lemma}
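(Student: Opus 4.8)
The plan is to establish the three assertions---triviality of the stabilizer, the zero-dimensional smooth structure, and non-compactness---in turn.

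\smallskip

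\noindent\emph{Triviality of the stabilizer.} This is a uniform refinement of Lemma~\ref{gaugestab0}, and its special case $F_{\afa}=X(\afa)$ is precisely Hypothesis (H3) for $\K=\G_0(Q)$. By Lemma~\ref{12days} I first fix $\eps_0,C>0$ so that (\ref{ppppd}) holds for all $\afr\in\Omega^0(Y,Q(\frak{g}))$ whenever $\Vert F_{\afa}-X(\afa)\Vert_{L^p(Y)}<\eps_0$; such $\afa$ are then irreducible, so the stabilizer $\Gamma_{\afa}\subseteq\G(Q)$ is finite, and moreover---arguing as in the proof of Proposition~\ref{freeactiononinstantons}, by writing a stabilizer element near the identity as $\exp(\afr)$, noting $\afr$ is $\afa$-covariantly constant, and invoking (\ref{ppppd})---there is $\delta_0>0$, independent of $\afa$, so that $e$ is the only element of $\Gamma_{\afa}$ within $\delta_0$ of the identity. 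For $\afa$ exactly $H$-flat I would deduce $\Gamma_{\afa}\cap\G_0(Q)=\{e\}$ from (H1) along the lines of Lemma~\ref{gaugestab0}: choosing (as Section~\ref{CompatiblePerturbations} permits) the holonomy data defining the instanton perturbation to be disjoint from the surface $\Sigma\hookrightarrow Y$ of (H1), such $\afa$ is genuinely flat near $\Sigma$, so $\afa|_{\Sigma}$ is flat on a bundle with $t_2$ a generator of $\bb{Z}_r$; any $\afu\in\G_0(Q)$ fixing $\afa$ restricts to an element of $\G_0(Q|_{\Sigma})\subseteq\ker\eta_{\Sigma}$ fixing $\afa|_{\Sigma}$, so $\afu|_{\Sigma}=e$ by Lemma~\ref{gaugestab0}, whence $\afu=e$ on $Y$ by covariant constancy and connectedness of $Y$. (Alternatively, one degenerates the perturbation $H$ to $0$ and uses Theorem~\ref{kronsmagic} to relate the holonomies of $\afa|_{\Sigma}$ to those of genuine flat connections.) To pass from $H$-flat to the open condition (\ref{3445}) I argue by contradiction: a sequence $\afu_n\in\G_0(Q)\setminus\{e\}$ fixing connections $\afa_n$ with $\Vert F_{\afa_n}-X(\afa_n)\Vert_{L^p}\to 0$ would, by Theorem~\ref{kronsmagic} (using $p>3$ and the absence of bubbling in dimension $3$ to obtain $C^0$-convergence $\afv_n^{*}\afa_n\to\afa_\infty$ after suitable gauge transformations $\afv_n$), produce---upon conjugating $\afu_n$ by $\afv_n$ and using the normality of $\G_0(Q)$---elements of $\G_0(Q)\setminus\{e\}$ fixing $\afv_n^{*}\afa_n$; these subconverge, staying a distance $\ge\delta_0$ from $e$, to some $\afu_\infty\in\G_0(Q)\setminus\{e\}$ fixing the $H$-flat connection $\afa_\infty$, contradicting the previous case. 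The $H$-flat step, and in particular making (H1) interact cleanly with the perturbation, is the part I expect to be most delicate.

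\smallskip

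\noindent\emph{Zero-dimensional smooth structure.} Granting the triviality of stabilizers, $\G_0(Q)$ acts freely on $\A_\fl(Q,H)$. By (H2) the extended Hessian (\ref{extendedhess}) $\D_{\afa}$ is invertible at every $H$-flat $\afa$, so the implicit function theorem, applied to $v\mapsto F_{\afa+v}-X(\afa+v)$ on a local Coulomb slice $\{d_{\afa}^{*}v=0\}$, shows the slice meets $\A_\fl(Q,H)$ only at $\afa$; by freeness this slice descends to a neighborhood of $[\afa]$ in ${\mathcal{I}}_H(Q)=\A_\fl(Q,H)/\G_0(Q)$. Hence ${\mathcal{I}}_H(Q)$ is a discrete, zero-dimensional smooth manifold.

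\smallskip

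\noindent\emph{Non-compactness.} Take $H$ to be $\G(Q)$-invariant, as holonomy perturbations are, so that $\G(Q)$ preserves $\A_\fl(Q,H)$; assume ${\mathcal{I}}_H(Q)\ne\emptyset$ and fix $[\afa_0]$ in it. Since $H^3(Y;\bb{Z})$ is torsion-free, $t_2(Q)$ lifts to an integral class, so Proposition~\ref{degprop}, applied with the surface $\Sigma$ of (H1), furnishes a gauge transformation $\afu_*\in\G(Q)$ of nonzero degree; as $\deg$ is a group homomorphism, the powers $\afu_*^{k}$ realize infinitely many distinct degrees. Writing $\afu_k=\afu_*^{k}$, each $\afu_k^{*}\afa_0$ lies in $\A_\fl(Q,H)$, and by (\ref{mod4grading1000}) the classes $[\afu_k^{*}\afa_0]\in{\mathcal{I}}_H(Q)$ have pairwise distinct relative gradings $\mu_\inst(\afa_0,\afu_k^{*}\afa_0)=4k\deg(\afu_*)$, hence are pairwise distinct. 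Therefore ${\mathcal{I}}_H(Q)$ is infinite, and being a zero-dimensional manifold it is non-compact.
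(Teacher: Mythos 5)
Your overall strategy matches the paper's: Lemma~\ref{12days} gives a neighborhood of the identity in $\G_0(Q)$ acting freely on connections with small perturbed curvature; a compactness/contradiction argument via Theorem~\ref{kronsmagic} and the bootstrapping identity (\ref{slop}) upgrades local to global freeness; a Coulomb-slice implicit function theorem argument gives the zero-dimensional smooth structure (the paper phrases this as an index-zero Fredholm section of a Banach bundle over $\mathcal{B}=\mathcal{U}/\G_0^{2,p}$, but the content is the same); and the iterated action of a nonzero-degree gauge transformation gives non-compactness. Your contradiction is arranged in the opposite direction but equivalently: you conclude $\afu_\infty\neq e$ from the $\delta_0$-gap, whereas the paper concludes $\afu_\infty=e$ from Lemma~\ref{gaugestab0} and then contradicts the existence of the small neighborhood $\mathcal{U}(e)$.

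The genuine divergence is the treatment of the stabilizer of the limiting connection $\afa_\infty$. The paper asserts $\afa_\infty$ is flat and applies Lemma~\ref{gaugestab0} directly. In fact the weak $W^{1,p}\hookrightarrow L^\infty$ convergence together with $\Vert F_{\afa_n}-X(\afa_n)\Vert_{L^p}\to 0$ forces $F_{\afa_\infty}=X(\afa_\infty)$, so $\afa_\infty$ is $H$-flat rather than flat, and Lemma~\ref{gaugestab0} as stated does not literally bite---irreducibility from (H2) gives a finite stabilizer, not a trivial one (e.g.\ the Klein four-group in $\SO(3)$ has finite nontrivial centralizer). You correctly flag this as the delicate step and supply a substitute: arrange the holonomy loops defining $H$ to miss the surface $\Sigma$ of (H1), so an $H$-flat connection is honestly flat near $\Sigma$; apply Lemma~\ref{gaugestab0} on $\Sigma$ to get $\afu|_\Sigma=e$; then propagate to $\afu=e$ on $Y$ by covariant constancy and connectedness. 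This is sound, but note it restricts $H$ to holonomy perturbations with support off $\Sigma$, a condition not contained in (H2) alone, so strictly speaking you prove a version of the lemma with an extra (harmless in practice, and consistent with Section~\ref{CompatiblePerturbations}) hypothesis on $H$; the paper's own step carries the same implicit burden. Similarly, for non-compactness you explicitly assume $H$ is $\G(Q)$-invariant so that $(\afu_*)^{k*}\afa_0$ remains $H$-flat; the paper needs this too, since $\afu_*$ has nonzero degree and hence lies outside $\K=\G_0(Q)$, the only group under which $H$ is assumed invariant in the lemma. You have made explicit two standing assumptions that the paper leaves tacit.
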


			\begin{proof} First we show how the second assertion follows from the first. The curvature is continuous (in fact, smooth) when viewed as a map $\A^{1,p}(Q) \longrightarrow L^p(\Omega^2),$ where $L^p(\Omega^2)$ is the $L^p$-completion of the space $\Omega^2(Y, Q(\frak{g}))$. In particular, the set ${\mathcal{U}}$ of connections ${\afa}$ satisfying (\ref{3445}) is a $\G_0^{2, p}(Q)$-invariant open set. It follows from the first assertion of the lemma that the set ${\mathcal{B}} \defeq {\mathcal{U}} / \G_0^{2, p}$ is naturally a smooth Banach manifold. The tangent space at a $\left[\afa\right] \in {\mathcal{B}}$ is the $W^{1,p}$-completion of the space 
			
			\begin{equation}\label{thjkkk}
			\Omega^1(Y, Q(\frak{g})) / \mathrm{im}\: d_{\afa}.
			\end{equation}
			We are interested in the Banach vector bundle ${\mathcal{E}} \rightarrow {\mathcal{B}}$ whose fiber over $\left[\afa\right]$ is the $L^p$-completion of (\ref{thjkkk}). Then the map ${\afa} \longmapsto *\left(F_{\afa} - X(\afa)\right)$ descends to a smooth section of ${\mathcal{E}}$ that is Fredholm with index 0. Moreover, the critical points of this section are exactly the elements of ${\mathcal{I}}_H(Q)$. By hypothesis (H2) these critical points are all non-degenerate, so by the inverse function theorem ${\mathcal{I}}_H(Q)$ is a smooth 0-dimensional manifold. The non-compactness follows, for example, from the fact that there is a degree $d \neq 0$ gauge transformation $\afu$. Then, by (\ref{mod4grading1000}), the $k$-fold iterated action of $\afu$ on an $H$-flat connection $\afa$ yields distinct $H$-flat connections.

			It remains to prove the first assertion in Lemma \ref{14days}. By Lemma \ref{12days}, there is some neighborhood ${\mathcal{U}}(e)$ of the identity in $\G_0(Q)$ that acts freely on the space of all connections with sufficiently small curvature. We want to show that this neighborhood is all of $\G_0(Q)$. This is another contradiction argument. If it does not hold, then there is a sequence of connections ${\afa}_n$ on $Q$ and gauge transformations ${\afu}_n$ on $Q$ such that ${\afu}_n^* {\afa}_n = {\afa}_n$ and $\Vert F_{\afa_n} - X(\afa_n)\Vert_{L^p} \rightarrow 0$. Apply Uhlenbeck's weak compactness theorem to conclude that, after passing to a subsequence, there is a sequence of gauge transformations ${\afu}'_n$ such that $({\afu}'_n)^* {\afa}_n$ converges weakly in $W^{1,p}$ to some flat connection ${\afa}_\infty$. For simplicity, we relabel $({\afu}'_n)^* {\afa}_n$ by ${\afa}_n$. Similarly, replace ${\afu}_n'{\afu}_n({\afu}_n')^{-1}$ with ${\afu}_n$, so we still have ${\afu}_n^* {\afa}_n = {\afa}_n.$ This relation combines with (\ref{slop}) to give
			
			$$ {\afa}_n{\afu}_n - {\afu}_n{\afa}_n = d {\afu}_n.$$
			Since the left-hand side is bounded in $W^{1,p}$, it follows that the right is as well. Thus the ${\afu}_n$ converge weakly in $W^{2,p}$ to some limiting gauge transformation ${\afu}_\infty$. This satisfies ${\afu}_\infty^* {\afa}_\infty = {\afa}_\infty.$ It follows from Lemma \ref{gaugestab0} that ${\afu}_\infty = e$ is the identity. To reiterate, the ${\afu}_n$ are a sequence of gauge transformation converging to the identity $e$, and each fixes a connection with small curvature. This contradicts the existence of the neighborhood ${\mathcal{U}}(e)$ containing the identity.
		\end{proof}

		\subsubsection{A $\bb{Z}_{4r}$-grading} \label{AZ_4rGrading}
			
			Here we consider the subgroup $\K = \ker \eta \subseteq \G(Q)$ given by the kernel of the parity operator $\eta: \G(Q) \rightarrow H^1(Y, \bb{Z}_r)$. First we want to show that this group acts freely on the space of $H$-flat connections. To see this, note that any gauge transformation in $\ker \eta$ with non-zero degree automatically acts freely on $\A_\fl(Q, H)$ by (\ref{mod4grading1000}). So it suffices to work with the degree zero elements of $\ker \eta$. However, any gauge transformation $\afu$ with $\deg(\afu) = 0$ and $\eta(\afu) = 0$ is automatically in the identity component $\G_0(Q)$, so the result follows by the analysis of Section \ref{AZGrading}.

		We saw in Section \ref{TopolicaAspectsOfPrincipalPU(r)-Bundles} that each element of $\ker \eta$ has degree divisible by $r$. Moreover, there always exists gauge transformations in $\ker \eta$ of degree $r$ (take $\afu_1^r$, where $\afu_1$ is the degree 1 gauge transformation from Proposition \ref{degprop} (b)). In particular, the induced grading on $HF^\bullet_\inst(Q)^{\ker \eta}$ is a relative $\bb{Z}_{4r}$-grading.

		It is interesting to compare the groups $HF^\bullet_\inst(Q)^{\G_0(Q)}$ and $HF^\bullet_\inst(Q)^{\ker \eta}.$ Fix a degree $r$ gauge transformation $\afu_r$, and assume that the instanton perturbation $H$ is invariant under the action of $\afu_r$. Then pullback by $\afu_r$ determines a degree $4r$ chain map on the underlying chain complex of $HF^\bullet_\inst(Q)^{\G_0(Q)}$. This therefore descends to a degree $4r$ map at the level of homology
		
		$$\iota_{\afu_r}: HF^\bullet_\inst(Q)^{\G_0(Q)} \longrightarrow HF^{\bullet+4r}_\inst(Q)^{\G_0(Q)},$$
		Moreover, this map is an isomorphism. It follows that all of the data of $HF^\bullet_\inst(Q)^{\G_0(Q)} $ is contained in any $4r$ consecutive degrees

		$$HF^k_\inst(Q) \oplus HF^{k+1}_\inst(Q) \oplus \ldots \oplus HF^{k+4r-1}_\inst(Q).$$
				
		In fact, we can say more. Quotienting $HF^\bullet_\inst(Q)^{\G_0(Q)}$ by the action of the map $\iota_{\afu_r}$ recovers the space $HF^\bullet_\inst(Q)^{\ker \eta}$
			
			\begin{equation}\label{quotbygu}
				HF^\bullet_\inst(Q)^{\G_0(Q)} {\longrightarrow  } HF^\bullet_\inst(Q)^{\ker \eta}.
			\end{equation}
			Fix the $k$th summand $HF^k_\inst(Q)^{\ker \eta}$ of $HF^\bullet_\inst(Q)^{\ker \eta}$. The fiber of the quotient map (\ref{quotbygu}) over this summand consists of the countably many summands
			
			$$\ldots \oplus HF^{k-4r}_\inst(Q)^{\G_0(Q)} \oplus  HF^{k}_\inst(Q)^{\G_0(Q)} \oplus  HF^{k+4r}_\inst(Q)^{\G_0(Q)} \oplus \ldots$$
			in $HF^{\bullet}_\inst(Q)^{\G_0(Q)}$. Moreover, the quotient map restricts to an isomorphism 
			
			$$HF^{k}_\inst(Q)^{\G_0(Q)}  \longrightarrow HF^{k}_\inst(Q)^{\ker \eta}$$ 
			for each $k \in \bb{Z}$. That is, all of the information of $HF^\bullet_\inst(Q)^{\G_0(Q)}$ is contained in $HF^\bullet_\inst(Q)^{\ker \eta}$, and vice-versa.
			
			\medskip

			Lastly, we discuss a sense in which the gauge group $\ker \eta$ is a very natural group to consider. Suppose $t_2(Q)$ is the reduction of an integral class. Then there is a $\U(r)$-bundle $\overline{Q} \rightarrow Y$ with $Q = \overline{Q} \times_{\U(r)} \PU(r)$ and $c_1(\overline{Q}) \equiv t_2(Q)$ mod $r$. In \cite{Donfloer} and \cite{BD}, the authors define a $\U(2)$-Floer theory, and their discussion generalizes easily to the group $\U(r)$. Their $\U(r)$-Floer chain complex is generated by central curvature connections with fixed determinant, and one works modulo the group of gauge transformations on $\overline{Q}$ with fixed determinant. It turns out that the $\U(r)$-Floer cohomology they describe is exactly the group $HF^\bullet_\inst(Q)^{\ker \eta}$ defined here. Indeed, the distinction between their set-up and ours is really just a matter of language. For example, flat connections on $Q$ are identified with central curvature connections on $\overline{Q}$ with fixed determinant. Likewise, the subgroup $\ker \eta \subset \G(Q)$ is identified with the gauge transformations on $\overline{Q}$ that have trivial determinant.\footnote{However, in \cite{Donfloer} and \cite{BD} it is pointed out that the issue of orientations for the moduli spaces is more difficult in the $\PU(r)$ theory. By working with $\bb{Z}_2$-coefficients we have avoided issues with orientations altogether. So from our perspective, $\PU(r)$-Floer theory on $Q$ is identical to $\U(r)$-Floer theory on $\overline{Q}$, provided of course that $t_2(Q)$ is induced from an integral class.}

			\subsubsection{A $\bb{Z}_{4}$-grading}\label{AZ_4Grading}

			Let $\afu_1 \in \G(Q)$ be a degree 1 gauge transformation satisfying the conclusion of Proposition \ref{degprop} (b). Note that the homotopy class of $\afu_1$ is uniquely determined by the conditions of the proposition, and hence uniquely determined by the choice of surface $\Sigma \subset Y$ from that proposition. Let $\G(\Sigma) \subseteq \G(Q)$ denote the group generated by $\afu_1$ and the identity component $\G_0(Q)$.
			
				\begin{example}\label{ex100}
		Suppose $d \in \bb{Z}_r$ is a generator, and $Q$ is a bundle that is $d$-compatible with a broken circle fibration $Y \rightarrow S^1$, as in Section \ref{CompatibleBundlesForBrokenCircleFibrations}. Take $\Sigma = \Sigma_0$. In that section we saw that $\G(\Sigma_0) = \G_{\Sigma_\bullet},$ where the latter group consists of those gauge transformations that restrict, on $\Sigma_\bullet$, to identity-component gauge transformations. 
		\end{example}

			 It follows that $\G(\Sigma)$ acts freely on $\A_\fl(Q, H)$, since each element of $\G(\Sigma)$ either has non-zero degree, or is in $\G_0(Q)$. It is immediate that the instanton Floer cohomology group $HF^\bullet_\inst(Q)^{\G(\Sigma)}$ admits a relative $\bb{Z}_4$-grading. 
			
			We want to compare the group $HF^\bullet_\inst(Q)^{\G(\Sigma)}$ with those defined in the previous two section. We begin with $HF^\bullet_\inst(Q)^{\G_0(Q)}$. As in Section \ref{AZ_4rGrading}, pullback by the degree 1 gauge transformation $\afu_1$ determines a degree 4 automorphism $\iota_{\afu_1}$ of $HF^\bullet_\inst(Q)^{\G_0(Q)}$. Quotienting by the action of the group of automorphisms generated by $\iota_{\afu_1}$ recovers $HF^\bullet_\inst(Q)^{\G(\Sigma)}$. Just as in Section \ref{AZ_4rGrading}, this says that both groups contain the same information. 
		
		In exactly the same way, $\afu_1$ determines a degree $4$ map $\iota_{\afu_1}'$ on $HF^\bullet_\inst(Q)^{\ker \eta} $. Once again, taking the quotient recovers the $\bb{Z}_4$-graded group $HF^\bullet_\inst(Q)^{\G(\Sigma)}$.

		\begin{remark}
		In \cite{BD} and \cite[Section 5.6]{Donfloer}, the authors address the map $\iota_{\afu_1}'$ in the case when $t_2(Q)$ is the reduction of an integral class. They define $\iota_{\afu_1}'$ by counting instantons on a $\U(r)$-bundle over $\bb{R} \times Y$ determined by $\Sigma$. 
		\end{remark}

		\subsubsection{More General $\K$}
		
		The extension of these results to more general $\K$ is difficult. First, for the analysis of Section \ref{InstantonFloerCohomologyForG=PU(r)}, we must have that $\K$ consists entirely of connected components of $\G(Q)$. To generalize this would require some serious reconstructions of the theory above. We therefore continue to assume that $\K$ consists entirely of connected components of $\G(Q)$. If a gauge transformation has non-zero degree, then it automatically acts freely by (\ref{mod4grading1000}). So the interesting cases are when we consider gauge transformations $\afu$ with $\deg(\afu) = 0$. These are determined, up to homotopy, by their parity. We are therefore considering the action of the finite group $H^1(Y, \bb{Z}_r)$. In \cite{Donfloer}, Donaldson points out that it is not clear how to construct instanton perturbations $H$ for which $H^1(Y, \bb{Z}_r)$ acts freely on $\A_\fl(Q, H)$. For example, one tends to construct perturbations $H$ satisfying (H2) and (H4) using the holonomy, as in Section \ref{CompatiblePerturbations}. When this is the case, the gauge transformations coming from elements of $H^1(Y, \bb{Z}_r)$ typically will \emph{not} act freely on the space of $H$-flat connections.

			\subsection{Instanton Floer cohomology for broken circle fibrations}\label{InstantonFleorCohomologyForBrokenCircleFibrations}

Now we specialize the discussion to the case relevant to the quilted Atiyah-Floer conjecture. Suppose $f: Y \rightarrow S^1$ is a broken circle fibration as in Section \ref{OverviewOfTheQuiltedAtiyahFloerConjecture}, and fix an integer $r   \geq 2$. Let $d \in \bb{Z}_r$ be a generator and let $Q\rightarrow Y$ be a principal $\PU(r)$-bundle that is $d$-compatible with $f$. In particular, $t_2(Q) \left[ \Sigma_0 \right] = d$, where $\Sigma_0$ is a regular fiber of $f$. Since $d \in \bb{Z}_r$ is a generator, this clearly satisfies hypothesis (H1) with $\Sigma = \Sigma_0$.

We will work with the relatively $\bb{Z}_4$-graded group $HF^\bullet_\inst(Q)^{\G_{\Sigma_\bullet}}$ from Section \ref{AZ_4Grading}; see also Example \ref{ex100}. It follows that this is uniquely determined by the data of $Y, f, t_2(Q), r$ and $d$ (in fact, it follows from the definition that it only depends on $f$ through its homotopy class in $\left[ Y, S^1\right]$). So in analogy with the quilted theory, we set

$$HF^\bullet_\inst(Y, f, Q)_{r, d} \defeq HF^\bullet_\inst(Q)^{\G_{\Sigma_\bullet}}$$
and call this the {\bfseries instanton Floer cohomology} of $(Y, f, t_2(Q), r, d)$. The goal of this section is to describe the generators and boundary operator for $HF^\bullet_\inst(Y, f, Q)_{r, d}$ in a manner analogous to the quilted case in Section \ref{QuiltedFloerCohomologyForBrokenCircleFibrations}. 
		
			\medskip
			
			We begin by restricting attention to a subclass of instanton perturbations that will help tie in the discussion with quilted Floer theory. Suppose for each $j\in \left\{0, \ldots, N-1\right\}$ we are equipped with a time-dependent function 
			$$H_{j, t}:  \A^{k,p}(P_j) \rightarrow \bb{R}$$ 
			that vanishes for $t \notin \left(0, 1\right)$, and is invariant under $ \G_0(P_j)$ for each $t$. Assume this is smooth with respect to the $W^{k, p}$-topology, though we suppress this from the notation. Then we obtain a smooth function $H : \A(Q) \rightarrow \bb{R}$ by the formula
				
				\begin{equation}\label{deforhandshit1}
				H({\afa}) \defeq  \sumdd{j= 0}{N-1} \intdd{0}{1} \: H_{j,t}\left({\afa} \vert_{\left\{t \right\} \times \Sigma_j} \right),
				\end{equation}
				and this is invariant under the gauge subgroup $\G_{\Sigma_\bullet}$. Let $X: \A(Q) \rightarrow \Omega^2(Y, Q(\frak{g}))$ represent the linearization $dH$ as in the beginning of Section \ref{InstantonFloerCohomologyForG=PU(r)}. Similarly, there is a map $X_{j, t}: \A(P_j) \rightarrow \Omega^1(\Sigma_j, P_j(\frak{g}))$ representing the linearization of $H_{j, t}$ (this is the Hamiltonian vector field for $h_{j , t}$). One can check these are related by
			$$X (\afa)\vert_{I  \times \Sigma_j} = dt \wedge  X_{j,t}(\afalpha(t)),$$
				where we have written $\afa = \afalpha(t) +\psi(t) \: dt$ on $I \times \Sigma_j$. Moreover, $X(\afa)$ vanishes on $Y_\bullet$. If, in addition, $H$ is an instanton perturbation (i.e., $V \defeq X$ satisfies conditions (a) and (b) from Theorem \ref{kronsmagic}), then we will say that $H$ is {\bfseries compatible (with quilted Floer theory)}. 
				
				It is often convenient to identify $H$ with the split-type Hamiltonian $\underline{H} = \underline{H}_t= \smash{\left( H_{j, t}\right)_j}$ on $\A(P_0) \times \ldots \times \A(P_{N-1})$. Moreover, restricting $H_{j, t}$ to $\A_\fl(P_j)$ and then quotienting by $\G_0(P_j)$, we obtain a Hamiltonian on $\M(P_j)$ as in Section \ref{QuiltedFloerCohomologyForBrokenCircleFibrations}. As another point on notation, if $\afalpha$ is a connection on $\Sigma_\bullet$, then we write 
				$$X_{t}(\afalpha) \in \Omega^1(\Sigma_\bullet, P_\bullet(\frak{g}))$$ 
				for the $t$-dependent vector field given over $\Sigma_j$ by $X_{j, t}(\afalpha \vert_{\Sigma_j})$.

\medskip

			The discussion above carries over directly to the bundle $Q^\epsilon \rightarrow Y^\epsilon$, equipped with the $\eps$-smooth structure and metric $g_\eps$ (see Sections \ref{EpsDependentSmoothStructures} and \ref{CompatibleBundlesForBrokenCircleFibrations}). Since instanton Floer cohomology depends only on the diffeomorphism type of $Y$ and isomorphism class of $Q$, it is clear that $HF^\bullet(Y^\eps, f, Q^\eps)_{r, d} = HF^\bullet(Y, f, Q)_{r, d}$ is independent of $\eps$.
			
			On the other hand, there is $\eps$-dependence at the chain level. To see this dependence on $\eps$ explicitly, we pass to certain local coordinates as follows: Let $H$ be a compatible instanton perturbation that is smooth on $\A(Q)$. The compatibility condition tells us that $H(\afa)$ only depends on $\afa \vert_{I \times \Sigma_\bullet}$, and so $H$ is actually a smooth function on $\A(Q^\eps)$ for any $\eps > 0$. For example, the 2-form $X(\afa)$ vanishes on $Y_\bullet$, and so does not fall victim to the pathological behavior illustrated in Figure \ref{figure7}.
			
			We have seen that every connection ${\afA}$ on $\mathbb{R} \times Y$ can be written in the form ${\afA} = {\afa}(s) + {\afp}(s)\:ds$. Similarly, over $\mathbb{R} \times I \times \Sigma_\bullet$ we can write $\left.{\afA}\right|_{\left\{(s, t) \right\} \times \Sigma_\bullet} = \afalpha(s, t) + {\afphi}(s, t)\:ds + {\afpsi}(s, t) \: dt,$ for some $\afalpha: \mathbb{R} \times I \rightarrow \A(P_\bullet)$ and ${\afphi}, {\afpsi}: \mathbb{R} \times I \rightarrow \Omega^0(\Sigma_\bullet, P_\bullet(\frak{g}))$. It follows that $\left.{\afa}(s)\right|_{\left\{t\right\} \times \Sigma} = \afalpha(s, t) + {\afpsi}(s, t)~dt$ and $\left. {\afp}(s) \right|_{\left\{t \right\} \times \Sigma} = {\afphi}(s, t).$ The curvature $F_{\afA}$ on the four-manifold $\bb{R} \times I \times \Sigma_\bullet$ can be written in terms of its components as 
			
			$$\begin{array}{rcl}
			F_{\afA}& = & F_{\afalpha} + (\partial_s {\afpsi} - \partial_t {\afphi} - \left[ {\afpsi}, {\afphi}\right] )\:ds\wedge dt\\
			&&~~ - (\partial_s {\afalpha}  - d_{\afalpha}  {\afphi}) \wedge ds - (\partial_t {\afalpha} - d_{\afalpha} {\afpsi})\wedge dt .
			\end{array}$$

			With this notation in place, we can therefore view the instanton boundary operator $\partial_{\inst}$ as counting isolated $\G_{\Sigma_\bullet}$-equivalence classes of connections ${\afA} \in \A(\mathbb{R} \times Q^\epsilon )$ satisfying

			\begin{equation}
			\begin{array}{lrcl}
				\bullet~~\textrm{($(g_\epsilon, {H})$-ASD on $\mathbb{R}\times I \times \Sigma_\bullet$}) \indent & \partial_s {\afalpha} - d_{\afalpha} {\afphi} + *\left(\partial_t {\afalpha} - X_t({\afalpha}) - d_{\afalpha} {\afpsi}\right) &=& 0\\
&  \partial_s {\afpsi} - \partial_t {\afphi} - \left[ {\afpsi}, {\afphi}\right] +\epsilon^{-2} *F_{\afalpha}& = &0 \\
				&&&\\
				\bullet~~\textrm{($(g_\epsilon, {H})$-ASD on $\mathbb{R} \times Y_\bullet$)} & \partial_s {\afa} - d_{\afa} {\afp} +\epsilon^{-1}*F_{\afa}&=& 0\\
 &\\
			\end{array}
			\label{asdconditions}
			\end{equation}
			with appropriate limits at $\pm \infty$. Here the norms and Hodge stars are all with respect to the fixed metric $g$. Compare the small $\eps$-limit of these equations with (\ref{jholoconditions}).

\section{The quilted Atiyah-Floer conjecture}\label{TheQuiltedAtiyahFloerConjecture}

In this section we state and discuss the quilted Atiyah-Floer conjecture and its cousins. Throughout we refer to Sections \ref{QuiltedFloerCohomologyForBrokenCircleFibrations} and \ref{InstantonFleorCohomologyForBrokenCircleFibrations} for notation. We begin by discussing a simple invariance property of Floer cohomology. Fix an integer $r \geq 2$ and a generator $d \in \bb{Z}_r$. For $j = 0, 1$, let $f_j : Y_j \rightarrow S^1 $ be a broken circle fibration and $Q_j \rightarrow Y_j$ a bundle that is $d$-compatible with $f_j$. Suppose there is an orientation-preserving diffeomorphism $Y_0 \rightarrow Y_1$ pulling $f_1$ back to $f_0$ and $Q_1$ back to $Q_0$. Then this diffeomorphism induces canonical isomorphisms of $\bb{Z}_4$-graded abelian groups:

\begin{equation}\label{nat1}
\begin{array}{rcl}
HF^\bullet_\quilt(Y_0, f_0, Q_0)_{r, d} & \cong & HF^\bullet_\quilt(Y_1, f_1, Q_1)_{r, d}\\
&&\\
HF^\bullet_\inst(Y_0, f_0, Q_0)_{r, d} & \cong & HF^\bullet_\inst(Y_1, f_1, Q_1)_{r, d}. 
\end{array}
\end{equation}
Now we can formally state the conjecture.

\begin{qconjecture}
Fix an integer $r \geq 2$ and let $d \in \bb{Z}_r$ be a generator. Given any broken circle fibration $f: Y \rightarrow S^1$ and principal $\PU(r)$-bundle $Q \rightarrow Y$ that is $d$-compatible with $f$, there is an isomorphism $HF^\bullet_\quilt(Y, f, Q)_{r, d} \cong HF^\bullet_\inst(Y, f, Q)_{r, d} $ of relatively $\bb{Z}_4$-graded abelian groups. Moreover, this isomorphism is natural in the sense that it intertwines the isomorphisms (\ref{nat1}).
\end{qconjecture}

There are various stronger versions of this conjecture. For example, one could repeat the discussion of Sections \ref{QuiltedFloerCohomology} and \ref{InstantonFloerCohomology}, generating the chain complexes over $\bb{Z}$, rather than $\bb{Z}_2$. This would require coherently orienting the moduli spaces of instantons and $J$-holomorphic curves in such a way that the boundary operator in each theory still squares to zero.

A second direction in which one could state a stronger conjecture is via a chain level version. The remainder of this section is dedicated to describing such a version. We let $CF^\bullet_\quilt \left(Y, f,Q,  \underline{J}, \underline{H}\right)_{r, d}$ (resp. $CF^\bullet_\inst \left(Y, f, Q, g, H\right)_{r, d}$) denote the $\bb{Z}_4$-graded quilted (instanton) Floer chain complex. We are suppressing the boundary operator in the notation. As usual, whenever we write this, we assume the data have been chosen so that these chain complexes are defined. Just as at the homology level, these chain groups satisfy an independence property: Suppose there is an orientation-preserving diffeomorphism from $Y_0$ to $Y_1$ that pulls the data on $Y_1$ back to the data on $Y_0$. Then this diffeomorphism determines a canonical isomorphism of the associated $\bb{Z}_4$-graded chain complexes.

\begin{qcconjecture} 
Fix an integer $r \geq 2$ and let $d \in \bb{Z}_r$ be a generator. Then given any broken circle fibration $f: Y \rightarrow S^1$, and principal $\PU(r)$-bundle $Q \rightarrow Y$ that is $d$-compatible with $f$, there are choices of $\underline{J}, \underline{H}$ and $g, H$ such that the following conditions hold.

\begin{itemize}
\item[(a)] The chain complexes $CF^\bullet_\inst \left(Y, f, Q, g, H\right)_{r, d}$ and $CF^\bullet_\quilt \left(Y, f, Q, \underline{J}, \underline{H}\right)_{r, d}$ are well-defined.
\item[(b)] There is a quasi-isomorphism 
$$\Psi: CF^\bullet_\inst \left(Y, f, Q, g, H\right)_{r, d} \longrightarrow CF^\bullet_\quilt \left(Y, f, Q, \underline{J}, \underline{H}\right)_{r, d}$$ 
that preserves the gradings and is natural in the sense that it intertwines the canonical isomorphisms on each chain complex.
\end{itemize}
\end{qcconjecture}

\subsection{A program for proving the conjecture}\label{AProgramForProvingTheConjecture}

Here we introduce a program for proving the chain level Atiyah-Floer conjecture. At the end we discuss briefly why we believe this is a fruitful approach.

To begin, we need to prescribe how the auxiliary data $\underline{J}, \underline{H}$ and $g, H$ can be chosen. Let $g$ be any metric on $Y$, and fix $\eps > 0$. Define the metric $g_\eps$ as in Section \ref{EpsDependentSmoothStructures}. It is convenient (but not necessary) to assume that $g$ restricts to a metric on $I \times \Sigma_\bullet$ that is constant in the $I$-direction. The choice of $g_\eps$ induces a split-type compatible almost complex structure $\underline{J}_g$ as in Remark \ref{surfacesremark} (a). Moreover, $\underline{J}_g$ is independent of $\eps$ because of the conformal invariance of 1-forms on surfaces. 

In Section \ref{CompatiblePerturbations}, we describe how to construct instanton perturbations $H$ that are compatible with quilted Floer theory in the sense of Section \ref{InstantonFleorCohomologyForBrokenCircleFibrations}. Any such $H$ induces a split-type Hamiltonian $\underline{H}_t$ as in Section \ref{InstantonFleorCohomologyForBrokenCircleFibrations}. Moreover, Proposition \ref{comppertprop} shows that $H$ can be chosen to be invariant under $\G(Q)$, and so that $CF^\bullet_\inst \left(Y^\eps, f^\eps, Q^\eps, g_\eps, H\right)_{r, d}$ and $CF^\bullet_\quilt \left(Y^\eps, f^\eps, Q^\eps, \underline{J}_g, \underline{H}_t\right)_{r, d}$ are well-defined. This takes care of (a) in the statement of the chain level conjecture. 

\begin{remark}\label{epsdependenceremark}
In the set-up we are describing, we are working with the $\eps$-dependent bundle $Q^\eps \rightarrow Y^\eps$, for a fixed $\eps > 0$ small. This ensures that the metric $g_\eps$ is smooth. However, the associated instanton chain complex only depends on $\eps$ through the boundary operator $\partial_\inst$. That is, the underlying abelian group

$$CF^\bullet_\inst \left(Y^\eps, f^\eps, Q^\eps, g_\eps, H\right)_{r, d} = \bigoplus_{\left[ \afa \right] \in {\mathcal{I}}_{H}(Q^\eps)}\: \bb{Z}_2 \langle \left[ \afa \right] \rangle$$
is independent of $\eps$, up to the action of the canonical diffeomorphism (\ref{milnorsdiffeo}).

In the case of the quilted Chain complex, the independence of $\eps$ is even stronger. Since $\underline{J}_{g_\eps} = \underline{J}_g$ for all $\eps$, it follows that both the abelian group $CF^\bullet_\quilt\left(Y^\eps, f^\eps, Q^\eps, \underline{J}_{g_\eps}, \underline{H}_t\right)_{r, d}$ and the boundary operator $\partial_\quilt$ are independent of $\eps$. Consequently, we will drop $\eps$ from the notation, unless it is relevant to the discussion.
\end{remark}

As for (b) in the statement of the chain level conjecture, it suffices to define the map $\Psi$ by specifying its value on the generators ${\mathcal{I}}_{H}(Q)$ of the domain. Similarly, ${\mathcal{I}}_{\underline{H}_t}(\underline{L}(Q))$ denotes the generating set of the quilted complex. Given any $\left[\afa\right] \in {\mathcal{I}}_{H}(Q)$, one can obtain an element of ${\mathcal{I}}_{\underline{H}_t}(\underline{L}(Q))$ by restricting $\left[ \afa\right]$ to each cobordism in $Y_\bullet$ (to do this one needs to know that $H$ is compatible with quilted Floer theory, which we have assumed is the case):

\begin{equation}
\begin{array}{rcl}
\Psi: {\mathcal{I}}_{H}(Q)&\longrightarrow  &{\mathcal{I}}_{\underline{H}_t}(\underline{L}(Q))\\
\left[\afa\right] & \longmapsto & \underline{e}_{\afa}\defeq \left(\left[\left.\afa\right|_{Y_{12}}\right], \left[\left.\afa\right|_{Y_{23}}\right], \ldots, \left[\left.\afa\right|_{Y_{N1}}\right] \right).
\end{array}
\label{psidef}
\end{equation}
Here the bracket on the left denotes the $\G_{\Sigma_\bullet}$-equivalence class, and the brackets on the right denote the $\G_0$-equivalence class on the relevant 3-manifold $Y_{j(j+1)}$. Observe that if $u \in \G_{\Sigma_\bullet}$ and $a \in \A_\fl(Q, H)$, then the restrictions $a\vert_{Y_{i(i+1)}}$ and $u^*a\vert_{Y_{i(i+1)}}$ are $\G_0(Q_{i(i+1)})$-gauge equivalent (this is essentially the definition of $\G_{\Sigma_\bullet}$). Hence, the map (\ref{psidef}) is well-defined.

\begin{theorem}\label{generators}
The map $\Psi$ defined in (\ref{psidef}) is a set bijection. Moreover, $\Psi$ is natural and respects the relative $\bb{Z}_4$-gradings.
\end{theorem}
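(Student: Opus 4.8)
The plan is to establish the three assertions---bijectivity, naturality, and grading preservation---in that order, with the bulk of the work going into bijectivity. The key point throughout is that an $H$-flat connection $\afa$ on $Q$ is, by definition, a critical point of $\CS_H$, which (since $H$ is compatible and hence supported on $I\times\Sigma_\bullet$, with vanishing differential on $Y_\bullet$) means $F_\afa = 0$ on $Y_\bullet$ and $F_\afa = X(\afa)$ on $I\times\Sigma_\bullet$. In particular each restriction $\afa|_{Y_{j(j+1)}}$ is genuinely flat, and each $\afa|_{I\times\Sigma_j}$, written in temporal-ish coordinates as $\afalpha_j(t)+\afpsi_j(t)\,dt$, solves $\partial_t\afalpha_j = X_{t,j}(\afalpha_j)$ after gauging away $\afpsi_j$ (using that all flat connections on $P_j$ are irreducible, Lemma~\ref{gaugestab0}, so the gauge is unique). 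This is exactly the system (\ref{quiltboundarycond}) defining ${\mathcal{I}}_{\underline{H}_t}(\underline{L}(Q))$, so the assignment $\Psi$ is at least well-defined into the target; the well-definedness on $\G_{\Sigma_\bullet}$-orbits is the paragraph already given before the theorem statement.

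For \emph{injectivity}, suppose $\afa,\afb\in\A_\fl(Q,H)$ have $\underline{e}_\afa = \underline{e}_\afb$. Then for each $j$ there is $\afu_{j(j+1)}\in\G_0(Q_{j(j+1)})$ with $\afu_{j(j+1)}^*(\afa|_{Y_{j(j+1)}}) = \afb|_{Y_{j(j+1)}}$, and similarly the holonomy-data on each $I\times\Sigma_j$ agree modulo $\G_0(P_j)$. One assembles these local gauge transformations into a global $\afu\in\G_{\Sigma_\bullet}$ with $\afu^*\afa=\afb$: on each piece the gauge transformation is determined up to the stabilizer of a flat/$H$-flat connection, which is trivial by irreducibility, so the matching conditions at the seams $\{0,1\}\times\Sigma_j$ force the local gauge transformations to patch, and the resulting $\afu$ lies in $\G_{\Sigma_\bullet}$ precisely because it is $\G_0$ on each $\Sigma_j$ by construction. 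Hence $[\afa]=[\afb]$ in ${\mathcal{I}}_H(Q)$. For \emph{surjectivity}, start with $\underline{e} = ([\afa_{01}],\ldots,[\afa_{(N-1)0}])\in{\mathcal{I}}_{\underline{H}_t}(\underline{L}(Q))$ together with its interpolating paths $\afalpha_j(t)$; use these paths to build flat connections on each $I\times\Sigma_j$ (put $\afalpha_j(t)+0\cdot dt$ and correct $\afpsi_j$ using $\afphi_j$ as in the holomorphic-curve-representative discussion), then glue $\afa_{j(j+1)}$ on $Y_{j(j+1)}$ to these using the seam-matching conditions $[\afa_{(j-1)j}|_{\Sigma_j}] = [\afalpha_j(0)]$ and $[\afalpha_j(1)]=[\afa_{j(j+1)}|_{\Sigma_j}]$---again the gluing gauge transformation exists and is unique by irreducibility---to obtain a smooth connection $\afa$ on $Q$ that is $H$-flat and satisfies $\Psi([\afa])=\underline{e}$. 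The one subtlety to check is smoothness across the seams, which follows because the matching is by genuine gauge transformations and the perturbation data $\underline{H}_t$ and $H$ were arranged to be compatible.

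\emph{Naturality} is essentially formal: an orientation-preserving diffeomorphism $Y_0\to Y_1$ pulling back all the data carries $\A_\fl(Q_1,H_1)$ to $\A_\fl(Q_0,H_0)$, carries the decomposition of $Y_1$ into elementary cobordisms and product pieces to that of $Y_0$, and intertwines the restriction maps defining $\Psi$; one checks the obvious square commutes. \emph{Grading preservation}: the relative $\bb{Z}_4$-grading on the instanton side is $\mu_\inst$ from Proposition~\ref{gradinglemma} reduced mod $4$, and on the quilted side it is the Maslov-type index of quilted Floer theory; the equality $\mu_\inst(\afa^-,\afa^+) \equiv \mu_\quilt(\underline{e}_{\afa^-},\underline{e}_{\afa^+}) \bmod 4$ is precisely the main theorem of the companion paper \cite{DunIndex}, so here we simply invoke it. (Alternatively one could note that both gradings are computed by Fredholm indices of operators that degenerate to one another as $\eps\to0$, but the clean statement is deferred to \cite{DunIndex}.)

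\textbf{Main obstacle.} The genuinely delicate step is surjectivity---specifically, checking that the connection $\afa$ reassembled from quilted data $(\afalpha_j,\afa_{j(j+1)})$ is smooth across the seams $\{0,1\}\times\Sigma_j$ (on $Y^\eps$ with its $\eps$-dependent smooth structure) and that the normalizations of $H$ versus $\underline{H}_t$ (the vanishing for $t\notin(0,1)$ and the ``constant on $Y_\bullet$'' condition) really do make the local flatness/Hamiltonian-flow conditions glue to the global $H$-flatness condition $F_\afa = X(\afa)$. All the gauge-patching arguments hinge on irreducibility of flat connections on the $P_j$ and $Q_{j(j+1)}$, which is available from Lemma~\ref{gaugestab0} given hypothesis (H1), so that part is safe; the care is all in the bookkeeping at the interface.
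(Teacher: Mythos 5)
Your overall strategy---restrict to each piece of the decomposition, invoke irreducibility (Lemma~\ref{gaugestab0}) to kill gauge ambiguities, reassemble the data---is the same as the paper's, and the naturality and grading claims are handled the same way (formal and deferred to \cite{DunIndex}, respectively). There are, however, two places in the bijectivity argument that need tightening.

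First, in surjectivity you write ``put $\afalpha_j(t)+0\cdot dt$ and correct $\afpsi_j$ using $\afphi_j$ as in the holomorphic-curve-representative discussion.'' This conflates the intersection-point problem with the quilted-cylinder problem: for a generator there is no $s$-direction, so $\afphi_j$ does not exist here, and there is nothing of the form in (\ref{jholoconditions}) to ``correct.'' What actually needs doing is purely a matching of gauge representatives at the two ends of $I\times\Sigma_j$. Second, you claim that smoothness across the seams ``follows because the matching is by genuine gauge transformations,'' but that is not yet an argument: if you only know the two sides agree at the seam after applying a gauge transformation $\mu\in\G_0(P_j)$ \emph{on the slice}, the naively glued connection is at best continuous. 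The paper resolves this by replacing the constant extension $\mathrm{proj}^*\alpha_j$ over $I\times\Sigma_j$ with $u_j^*(\mathrm{proj}^*\alpha_j)$, where $u_j: I\to\G_0(P_j)$ is a path from $e$ to $\mu_j$, chosen to extend to a smooth map $\bb{R}\to\G_0(P_j)$ that is constant outside $I$; this spreads the gauge correction out and yields a smooth connection. Similarly, for injectivity the paper first checks continuity of the assembled global gauge transformation at the seams (using triviality of stabilizers, i.e.\ Lemma~\ref{gaugestab0}) and then bootstraps, via $du = \afa u + u\afa'$, to get ${\mathcal{C}}^\infty$; your sketch asserts the patching but omits both the continuity check and the bootstrap. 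These are fixable gaps, and once filled your argument coincides with the paper's.
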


\begin{proof}
The naturality is immediate. That $\Psi$ respects the gradings is proved in \cite{DunIndex}. The bijectivity is really just a direct consequence of the definitions, but we supply a proof for convenience. The idea is that, given any perturbed Lagrangian intersection point $\underline{e}$, the matching conditions on the boundary imply that we can construct an $H$-flat connection $\afa$ that maps to $\underline{e}$ under $\Psi$. When constructing $\afa$ there is a choice involved, and the ambiguity is measured by the group $\G_{\Sigma_\bullet}$. However, by Example \ref{ex100}, the set ${\mathcal{I}}_{H}(Q)$ is exactly the quotient $\A_\fl(Q, H)$ by the action of $\G_{\Sigma_\bullet}$, and so the class of $\afa$ in ${\mathcal{I}}_{H}(Q)$ is uniquely determined by the intersection point $\underline{e}$. 

Now we work this out in detail. To simplify the notation we assume $H = 0$, and so $\underline{H}_t = 0$. We will denote by $\iota: \Sigma_i \hookrightarrow Y_{i(i+1)}$ and $\iota': \Sigma_{i+1} \hookrightarrow Y_{(i-1)i}$ the inclusion of the boundary components. We do not keep track of the index $i$ in the notation of $\iota$ and $\iota'$. We first prove that $\Psi$ is surjective. Fix some 

$$\left(\Big[a_{01}\Big], \: \Big[a_{12}\Big], \: \ldots,\: \Big[a_{(N-1)0}\Big] \right) \in {\mathcal{I}}(\underline{L}(Q)).$$ 
We need to show that there is some $a \in \A_{\fl}(Q)$ with $\left. a \right|_{Y_{i(i+1)}}  \in \left[a_{i(i+1)}\right].$ Choose a representative $a_{i(i+1)} \in \A_{\fl}(Q_{i(i+1)})$ for each $\left[a_{i(i+1)}\right]$. These give us an obvious definition for $a$ over the $Y_{\bullet}$, however we need to define $a$ over $I \times \Sigma_\bullet$ as well. To do this, let $\alpha_i \defeq \left. a_{(i-1)i}\right|_{\iota'(\Sigma_i)}$. Then $\alpha_i\in \A_{\fl}(P_i)$ is flat, and therefore so is $\mathrm{proj}^* \alpha_i$ where $\mathrm{proj}: I \times \Sigma_i \rightarrow \Sigma_i$ is the projection (so $\mathrm{proj}^* \alpha_i$ is a flat connection on the 3-manifold $I \times \Sigma_i$). The boundary condition (\ref{quiltboundarycond}) implies that there is some $\mu_i \in \G_0(P_i)$ with $\mu_i^*\alpha_i = \left. a_{i(i+1)}\right|_{\iota(\Sigma_i)}$. By definition, $\G_0(P_i)$ is path-connected so there is some path $u_i: I \rightarrow \G_0(P_i)$ connecting the identity to $\mu_i$. We can equivalently view the path $u_i$ as an element of $\G(I \times P_i)$, i.e. as a gauge transformation over the cylinder $I \times \Sigma_i$. Then $u_i^* \left(\mathrm{proj}^* \alpha_i\right)$ is a flat connection on $I \times P_i$ that agrees with $a_{(i-1)i}$ and $a_{i(i+1)}$ at each of the two boundary components. Then we define $a$ to be $u_i^* \left(\mathrm{proj}^* \alpha_i\right)$ over $I \times \Sigma_i$. It follows that $a$ is continuous, flat and restricts to the desired connections over the $Y_{i(i+1)}$. By choosing the paths $u_i: I \rightarrow \G_0(P_i)$ appropriately, we can also ensure that $a$ is smooth. (For example, choose the $u_i$ so they extend to be smooth maps $\bb{R} \rightarrow \G_0(P_i)$ that are constant on the complement of $I \subset \bb{R}$.)

For injectivity, suppose ${\afa}, {\afa}' \in \A_{\fl}(Q)$ are two flat connections that descend to the same Lagrangian intersection point under $\Psi$. We need to show that $\afa' = \afu^* \afa$ for some gauge transformation $\afu \in \G_{\Sigma_\bullet}$. The fact that $\afa$ and $\afa'$ restrict to the same Lagrangian intersection point implies that these connections restrict (modulo $\G_0(Q_{i(i+1)})$) to the same connection on each $Y_{i(i+1)}$. First we will show that $\afa$ and $\afa'$ also restrict (modulo $\G_0(I \times P_i)$) to the same connection on each $I \times \Sigma_i$. This is essentially a consequence of the injectivity of the restriction $\M(I \times P) \hookrightarrow \M\left(\left\{0\right\} \times P\right) \times \M\left(\left\{1\right\} \times P\right)$ from Theorem \ref{cobordisms} for bundles $I \times P \rightarrow I \times \Sigma$ over product cobordisms. Indeed, that $a$ and $a'$ restrict to the same connection (mod gauge) on $Y_\bullet$, in particular, means that they restrict to the same connection (mod gauge) on each boundary component $\Sigma_i$. So $\left[\left.a \right|_{I \times \Sigma_i} \right]$ and $\left[\left.a' \right|_{I \times \Sigma_i} \right]$ have the same image in $\M(\Sigma_i) \times \M(\Sigma_i)$ and hence these are equal $\left[\left.a \right|_{I \times \Sigma_i} \right] = \left[\left.a' \right|_{I \times \Sigma_i} \right]$. So there are gauge transformations $u_{i} \in \G_0(I \times P_i)$ and $u_{i(i+1)} \in \G_0(Q_{i(i+1)})$ such that 

$$\left.u_i^*a\right|_{I \times \Sigma_i} = \left.a'\right|_{I \times \Sigma_i} \indent \textrm{and} \indent \left.u_{i(i+1)}^*a\right|_{Y_{i(i+1)}} = \left.a'\right|_{Y_{i(i+1)}}.$$ 
The data $\left\{u_i,u_{i(i+1)} \right\}_{i = 0, \ldots, N-1}$ patch together to form a global (possibly discontinuous) gauge transformation $u$ on $Q \rightarrow Y$. We need to show that $u$ is smooth; it will then automatically be contained in $\G_{\Sigma_\bullet}$.

We first claim that $u$ is continuous. To see this, note that 

$$\left(\left.u_i\right|_{\left\{1 \right\} \times \Sigma} \right)^* \left(\left.a\right|_{\left\{1 \right\}\times \Sigma_i} \right) = \left.a'\right|_{\left\{1 \right\}\times \Sigma_i}=  \left.a'\right|_{\iota(\Sigma_i)} = \left(\left.u_{i(i+1)}\right|_{\iota(\Sigma_i)}\right)^*\left(\left.a\right|_{\iota(\Sigma_i)}\right).$$
The functoriality of the characteristic classes from Section \ref{TopolicaAspectsOfPrincipalPU(r)-Bundles} implies that the map $\iota^*: \G(Q_{i(i+1)}) \rightarrow \G(P_i)$ induced by the inclusion map $\iota$ restricts to a map of the form $\G_0(Q_{i(i+1)}) \rightarrow \G_0(P_i)$. This shows that 

$$ \left(\left.u_{i(i+1)}\right|_{\iota(\Sigma_i)}\right)^{-1}\left.u_i\right|_{\left\{1 \right\} \times \Sigma_i} \in \G_0(P_i)$$ 
and that this fixes $\left.a\right|_{\left\{1 \right\}\times \Sigma_i}  = \left.a\right|_{\iota(\Sigma_i)}$. By Lemma \ref{gaugestab0}, we must have that this is the identity gauge transformation, and so $\left.u_{i(i+1)}\right|_{\iota(\Sigma_i)}= \left.u_i\right|_{\left\{1 \right\} \times \Sigma_i}.$ A similar argument shows $\left.u_{i(i+1)}\right|_{\iota'(\Sigma_{i+1})}= \left.u_{i+1}\right|_{\left\{0 \right\} \times \Sigma_{i+1}}$. This proves the claim.

To see that $u$ is actually smooth, we use the following trick from \cite[Chapter 2.3.7]{DK} to bootstrap: As in (\ref{slop}), by choosing a faithful matrix representation we can write the action of $u$ on $a$ as $u^*a = u^{-1}au -u^{-1}du,$ where on the right we are viewing the gauge transformation as a map $u: Q \rightarrow G$, and the concatenation is matrix multiplication. Rearranging this and setting $a' \defeq u^*a$ gives $du = au + ua'.$ The right-hand side is ${\mathcal{C}}^0$, so $u$ is of differentiability class ${\mathcal{C}}^1$. Repeatedly bootstrapping in this way shows that $u$ is in ${\mathcal{C}}^\infty$, and hence $u \in \G(Q)$. 

\end{proof}

As we have seen, in order to make each of the Floer chain complexes well-defined, we need to choose perturbations in such a way that the generators are non-degenerate. It turns out that non-degeneracy for one chain complex implies non-degeneracy for the other; see \cite{DunIndex} for a proof.

\begin{proposition}\label{nondegprop}
Suppose $H$ is an instanton perturbation that is compatible with quilted Floer theory, and let $\underline{H}$ be the split-type Hamiltonian perturbation associated to $H$ via the procedure of Section \ref{InstantonFleorCohomologyForBrokenCircleFibrations}. Then the elements of ${\mathcal{I}}_{H}(Q)$ are non-degenerate if and only if the elements of ${\mathcal{I}}_{\underline{H}}(\underline{L}(Q))$ are non-degenerate.
\end{proposition}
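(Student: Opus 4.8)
The plan is to show that, under the bijection $\Psi$ of Theorem \ref{generators}, the linearized operator governing non-degeneracy of an $H$-flat connection $\afa \in \A_\fl(Q,H)$ is, up to an isomorphism of the relevant function spaces, the same as the linearized operator governing non-degeneracy of the corresponding perturbed generalized Lagrangian intersection point $\underline{e}_\afa \in {\mathcal{I}}_{\underline{H}}(\underline{L}(Q))$. Recall that non-degeneracy of $\afa$ means the extended Hessian $\D_\afa$ of (\ref{extendedhess}) is injective, equivalently that the only solution of $*d_\afa \xi - *dX_\afa \xi - d_\afa \zeta = 0$, $d_\afa^*\xi = 0$ is $\xi = 0$, $\zeta = 0$; while non-degeneracy of $\underline{e}_\afa$ means the linearization of the system (\ref{quiltboundarycond}) at $\underline{e}_\afa$, modulo the linearized gauge action on the $P_j$, is injective. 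First I would fix a compatible metric $g$ (as in Section \ref{AProgramForProvingTheConjecture}) so that $\afa$ restricts to a flat connection $\alpha_j$ on each $P_j$, a flat connection on each product cobordism $I \times \Sigma_j$ (which by the matching conditions is the pullback of a path $\afalpha_j(t)$ satisfying $\partial_t \afalpha_j = X_{t,j}(\afalpha_j)$ modulo gauge), and a flat connection $\afa_{j(j+1)}$ on each $Y_{j(j+1)}$; this is precisely the decomposition used in the proof of Theorem \ref{generators}.

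The key analytic step is to decompose the kernel of $\D_\afa$ according to the splitting $Y = Y_\bullet \cup (I \times \Sigma_\bullet)$. On each $Y_{j(j+1)}$ the perturbation vanishes, so a harmonic element restricted there is determined by its boundary restriction to $\Sigma_j \sqcup \Sigma_{j+1}$ via the Lagrangian $\La(Q_{j(j+1)})$ of Theorem \ref{cobordisms}: the tangent space to $\La(Q_{j(j+1)})$ at the relevant point is exactly the image of $H^1_{\afa_{j(j+1)}}$ under restriction, and the restriction map is injective (this uses irreducibility, i.e. Lemma \ref{gaugestab0}, and is the infinitesimal version of the injectivity of $\M(Q_{j(j+1)}) \hookrightarrow \M(P_j)^- \times \M(P_{j+1})$ invoked in the proof of Theorem \ref{generators}). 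On each $I \times \Sigma_j$ the equation $*d_\afa\xi - *dX_\afa\xi - d_\afa\zeta = 0$ with $d_\afa^*\xi = 0$, written in the coordinates $\afalpha, \afphi, \afpsi$ of Section \ref{InstantonFleorCohomologyForBrokenCircleFibrations} and combined with the flatness constraint $F_{\afalpha}|_{\Sigma_j} = 0$, reduces after projecting to harmonic spaces to the linearization of the Hamiltonian flow equation $\partial_t \xi_j = dX_{t,j}(\afalpha_j)\xi_j$ — i.e. the linearized ODE whose boundary-value problem defines non-degeneracy of $\underline{e}_\afa$. Putting these together, a nonzero element of $\ker \D_\afa$ produces, by restriction to the seams $\{0,1\}\times\Sigma_\bullet$, a nonzero solution of the linearized quilted problem, and conversely any nonzero solution of the linearized quilted problem (a Jacobi-type field along the $\afalpha_j(t)$ whose endpoints match up through the $T\La(Q_{j(j+1)})$) extends uniquely — using the unique solvability of the harmonic/Coulomb problem on $Y_{j(j+1)}$ with prescribed boundary data and the irreducibility of all the flat connections involved — to a nonzero element of $\ker \D_\afa$. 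Hence $\D_\afa$ is injective if and only if the quilted linearization at $\underline{e}_\afa$ is injective.

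I would organize the write-up as: (1) recall both non-degeneracy conditions as injectivity statements for explicit operators; (2) use the compatible metric and Hypothesis on $H$ being constant on $Y_\bullet$ to write $\D_\afa$ in block form adapted to $Y_\bullet$ versus $I \times \Sigma_\bullet$; (3) on $Y_\bullet$, identify solutions with tangent vectors to $\La(Q_{j(j+1)})$ and note the restriction-to-seam map is injective with image $T\La(Q_{j(j+1)})$; (4) on $I \times \Sigma_\bullet$, after harmonic projection identify solutions with linearized Hamiltonian trajectories; (5) conclude that the patched boundary-matching problem for the full operator is literally the linearization of (\ref{quiltboundarycond}), modulo linearized gauge, so the two injectivity statements coincide. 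The main obstacle I anticipate is the bookkeeping in step (4): one must carefully track how the auxiliary forms $\afphi_j, \afpsi_j$ (and $\afp_{j(j+1)}$) — which are uniquely determined by the irreducibility of flat connections, exactly as in the passage constructing holomorphic curve representatives — absorb the $d_\afa\zeta$ and gauge-fixing terms, so that the residual equation on the harmonic part $\mathrm{proj}_{H^1_{\afalpha_j}}(\partial_t\xi_j - dX_{t,j}\xi_j) = 0$ is genuinely equivalent, not merely implied, by membership in $\ker\D_\afa$. Handling the seams — ensuring the matching of $\afphi_j$ across $\{1\}\times\Sigma_j$ with $\afp_{j(j+1)}$ and the continuity argument of Theorem \ref{generators}'s proof go through at the linearized level — is the other delicate point, but it is a direct linearization of arguments already present in the excerpt.
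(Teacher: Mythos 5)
The paper does not prove Proposition~\ref{nondegprop}; the text immediately after the statement says only ``see \cite{DunIndex} for a proof,'' so there is no in-paper argument to compare yours against. Your strategy is the natural one and is consistent with the rest of the paper's methodology: decompose $\ker \D_\afa$ along $Y_\bullet \sqcup (I \times \Sigma_\bullet)$, use compatibility of $H$ to reduce to genuine harmonic $1$-forms on $Y_\bullet$ and hence tangent vectors to $\La(Q_{j(j+1)})$ via Theorem~\ref{cobordisms}, and use harmonic projection on the cylinder pieces to recover the linearized Hamiltonian ODE whose boundary-value problem encodes non-degeneracy of $\underline{e}_\afa$. That is very plausibly the content of the deferred proof, and your outline matches the proof of Theorem~\ref{generators} at the linearized level.

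Two steps in your sketch are genuinely substantive and should not be filed under ``bookkeeping.'' First, in the converse direction you assert that a nonzero linearized quilted solution ``extends uniquely'' to an element of $\ker \D_\afa$; since $\D_\afa$ is first-order elliptic, the piecewise-defined extension lies in the kernel precisely when the full $(\Omega^1 \oplus \Omega^0)$-valued data match continuously across the seams, and the freedom in the $0$-form components ($\afphi_j, \afpsi_j, \afp_{j(j+1)}$, i.e.\ your $\zeta$) is exactly what must be adjusted to arrange this. Identifying that adjustment consistently is the heart of the step, not an afterthought, and it also requires knowing the boundary-value problem on $Y_{j(j+1)}$ with prescribed $\Omega^1$-data on $\partial Y_{j(j+1)}$ tangent to $\La(Q_{j(j+1)})$ is uniquely solvable. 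Second, the harmonic projection on $I \times \Sigma_j$ must be shown to lose nothing: you need that the components of $\xi$ orthogonal to $H^1_{\alpha_j(t)}$, together with $\zeta$, are determined by the harmonic part via the Coulomb and flatness constraints and irreducibility of the $\alpha_j(t)$ (Lemma~\ref{gaugestab0}), so that $\ker \D_\afa$ over the cylinder is \emph{equal} to, not merely surjects onto, the space of linearized Hamiltonian trajectories with Lagrangian boundary conditions. Fill in those two and the outline is sound.
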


To prove the chain level quilted Atiyah-Floer conjecture, it therefore suffices to show that $\Psi$ intertwines the boundary operators (it will automatically induce an isomorphism at the level of homology, since it is a bijection). Recall from Remark \ref{epsdependenceremark} the though the underlying chain groups of each Floer theory is independent of $\eps> 0$, the instanton boundary operator $\partial_\inst$ does depend on $\eps$ since the instanton moduli spaces do. If one could show that the instanton moduli spaces of dimension zero are in bijective correspondence with the holomorphic curve moduli spaces of dimension zero, then it would be automatic that $\Psi$ is a chain map. When $\eps > 0$ is small, this is likely the case since the defining equations for each moduli space are close for small $\eps$. In other words, proving the chain level quilted Atiyah-Floer conjecture reduces to showing that each isolated $\eps$-instanton is close to a unique isolated holomorphic curve representative, and vice versa. In \cite{DunComp} we show that the first holds. Showing that the `vice versa' holds is currently an active research project.

In the special case where $f : Y \rightarrow S^1$ has no critical points, Dostoglou and Salamon were able to prove the chain level conjecture by exactly the approach we have described here \cite{DS2} \cite{DS3} \cite{DSerr}. Our approach to the more general case with critical points was motivated by their work.

\subsection{Compatible perturbations}\label{CompatiblePerturbations}

Fix a generator $d \in \bb{Z}_r$, and a broken circle fibration $f: Y \rightarrow S^1$. Let $Q \rightarrow Y$ be a principal $\PU(r)$-bundle that is $d$-compatible with $f$, in the sense of Section \ref{CompatibleBundlesForBrokenCircleFibrations}. Then $Q$ satisfies hypothesis (H1) from Section \ref{InstantonFloerCohomology}. In this section we prove the following proposition.

\begin{proposition}\label{comppertprop}
Fix a countable set of metrics $\left\{g_n \right\}_{n \in \bb{N}}$ on $Y$. Then there is a smooth instanton perturbation $H: \A^{k, p}(Q) \longrightarrow \bb{R}$ satisfying the following:

\begin{itemize}
\item[(i)] $(g_n, H)$ is regular for instanton Floer theory, for each $n$;
\item[(ii)] $H$ is invariant under the full gauge group $\G^{k+1, p}(Q)$;
\item[(iii)] $H$ is compatible with quilted Floer theory;
\item[(iv)] $(\underline{J}_{0}, \underline{H}_t)$ is regular for quilted Floer theory.
\end{itemize}
Here, $\underline{H}_t$ is the split-type Hamiltonian associated to $H$ via the procedure of Section \ref{InstantonFleorCohomologyForBrokenCircleFibrations}, and $\underline{J}_{0}$ is the split-type almost complex structure associated, as in Remark \ref{surfacesremark} (a), to the metric $g_0$ on $Y$. 
\end{proposition}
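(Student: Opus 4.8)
The plan is to construct $H$ by a standard Sard–Smale/Baire-category argument carried out in the product of the two perturbation spaces, exploiting the dictionary of Section~\ref{InstantonFleorCohomologyForBrokenCircleFibrations} that identifies a compatible instanton perturbation with a split-type Hamiltonian. First I would fix the Banach manifold of perturbations to work in: let $\mathcal{P}$ denote a suitable separable space of $\G(Q)$-invariant holonomy perturbations that are \emph{constant on} $Y_\bullet$ in the sense of~(\ref{instnorm}). Kronheimer's construction (\cite{Kron}, cited around Theorem~\ref{kronsmagic}) shows every element of $\mathcal{P}$ satisfies conditions (a)–(b); invariance under all of $\G^{k+1,p}(Q)$ can be arranged because holonomy perturbations built from conjugation-invariant functions of holonomies along loops are automatically gauge-invariant; and being supported away from $Y_\bullet$ — i.e. built only from loops lying in the pieces $I\times\Sigma_\bullet$ — is precisely condition~(iii). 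This choice simultaneously secures (ii) and (iii) by construction, and via~(\ref{deforhandshit1}) every such $H$ determines a split-type $\underline{H}_t$. The only remaining task is to achieve (i) and (iv) for a \emph{single} $H$ drawn from this class.

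For (i): for each fixed $n$, regularity of $(g_n,H)$ for instanton Floer theory is an open, dense condition on $H\in\mathcal{P}$. Density is the usual argument — the universal moduli space $\bigcup_{H}\mathcal{N}(\afu_i^*\afa^-,\afu_j^*\afa^+)$ is cut out transversally because holonomy perturbations are rich enough to kill any cokernel element of $\D_{\afA,g_n,H}$ (this is where one invokes that an ASD trajectory is somewhere injective and the perturbations separate points along loops, exactly as in \cite{FHS}-type arguments adapted to instantons, cf. \cite[Chapter 5]{Donfloer}); then Sard–Smale gives density of the regular $H$, and the perturbed Uhlenbeck compactness of Theorem~\ref{kronsmagic} gives openness on the finitely many relevant index strata. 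Since the $\{g_n\}$ are countable, the intersection over $n$ of these residual sets is still residual (hence dense) in $\mathcal{P}$. For (iv): here I would use Proposition~\ref{nondegprop} to transfer non-degeneracy of generators, and then argue that regularity of $(\underline{J}_0,\underline{H}_t)$ for quilted Floer theory is \emph{also} a residual condition on $H\in\mathcal{P}$ — the point being that the subspace $\underline{H}_t$ ranges over, as $H$ varies over $\mathcal{P}$, is exactly the space of split-type Hamiltonians (each $H_{j,t}$ can be prescribed independently and arbitrarily among $\G_0(P_j)$-invariant functions vanishing near $t\notin(0,1)$), which is known to suffice for transversality of holomorphic quilted cylinders by the discussion in Section~\ref{QuiltedFloerCohomologyForBrokenCircleFibrations} (following \cite{WWquilt}, \cite{WWquilterr}). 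Intersecting the residual set from (i) with the residual set from (iv) produces the desired $H$.

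The main obstacle, and where I would spend the most care, is the \emph{compatibility} constraint: the perturbation must be $\G(Q)$-invariant \emph{and} constant on $Y_\bullet$, yet still large enough to achieve transversality on \emph{both} sides at once. On the instanton side one worries that forbidding perturbations supported on $Y_\bullet$ leaves too few degrees of freedom to perturb away cokernels of $\D_{\afA,g_n,H}$ for trajectories that spend time in $Y_\bullet$; the resolution is that an instanton trajectory which is flat (hence locally reducible-free by (H1)/Lemma~\ref{gaugestab0}) on all of $\bb{R}\times Y_\bullet$ would be constant there, and a genuine trajectory must therefore vary somewhere over $\bb{R}\times I\times\Sigma_\bullet$, where our perturbations act — so a unique-continuation argument reduces transversality to the region where we are allowed to perturb. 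On the quilted side the analogous subtlety is the classical one that $\underline{H}=(H_j)_j$ must be of split type, which I would handle exactly as in \cite{WWquilt}, \cite{WWquilterr}. The final step is purely bookkeeping: verify that the $H$ obtained indeed lies in $\mathcal{P}$ (so (ii), (iii) hold) and simultaneously lies in the countable intersection of regular sets (so (i), (iv) hold), and record that such $H$ exists because a countable intersection of dense residual subsets of the Baire space $\mathcal{P}$ is nonempty.
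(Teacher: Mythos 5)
Your proposal captures the two essential ideas of the paper's proof — using holonomy perturbations built only from loops in $I\times\Sigma_\bullet$ (so that (ii) and (iii) hold automatically), and invoking a unique continuation principle to show such perturbations still suffice for transversality — but it packages them differently and glosses over a couple of points that the paper treats with some care.

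First, the packaging. You frame the argument as a Baire-category intersection in ``a suitable separable space $\mathcal{P}$'' of holonomy perturbations. The paper instead proceeds recursively: at each stage it isolates a \emph{finite-dimensional} vector space $V$ of holonomy perturbations (Donaldson's immersion argument applied to a compact set $C\subset\mathcal{B}^*$ and a compact subset $\underline{C}\subset T\mathcal{B}^*$), establishes open-and-dense regularity inside $V$, and then builds an infinite series $H=\sum_l H_l$ whose $\mathcal{C}^l$-norms are dominated both by $2^{-l}$ and by a fraction of the distance from the partial sum to the nearest non-regular perturbation. This explicit construction is not mere pedantry: the naive space of all holonomy perturbations is not a Banach space, the regularity condition involves infinitely many energy strata (and infinitely many metrics $g_n$), and the recursive $\mathcal{C}^l$-bound is exactly what replaces the ``residual in a Baire space'' step in your sketch. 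If you wish to keep the Baire language, you should say explicitly that $\mathcal{P}$ is a Floer-style $\mathcal{C}^\infty_\eps$-type Banach space and verify that residuality passes through the energy truncations; the paper sidesteps this by the recursive series.

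Second, your heuristic for why perturbations localized to $I\times\Sigma_\bullet$ are enough is not quite right. You write that an instanton trajectory that is flat on $\bb{R}\times Y_\bullet$ would be constant there, so a genuine trajectory must vary over $\bb{R}\times I\times\Sigma_\bullet$. But for a fixed metric $g_n$ an instanton trajectory is \emph{not} flat on $\bb{R}\times Y_\bullet$ (that is only a feature of the $\eps\to 0$ limit). The actual mechanism, proved as Lemma~\ref{separatepoints} in the paper, is Aronszajn-type unique continuation for finite-energy ASD connections: if two ASD connections agree on a nonempty open subset, they agree everywhere up to gauge. Applied with $U\subset\bb{R}\times(0,1)\times\Sigma_0$, this shows that holonomies along loops in $I\times\Sigma_\bullet$ already separate distinct trajectories (and, after linearization, separate nonzero cokernel elements), which is precisely what Donaldson's immersion argument needs. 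Your final conclusion is correct, but the stated reason is not, and a reader following your heuristic literally would go astray.

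Finally, you allude to ``finitely many relevant index strata'' when claiming openness, which is the right instinct, but the paper handles unbounded energy via an explicit induction on energy bounds $E^\inst$, $E^\quilt$, using that $H'$-instantons near the ends of a moduli space converge to broken trajectories (plus bubbles) each of strictly smaller energy. That induction is what justifies restricting attention to a compact subset of $\mathcal{B}^*$ at each stage. Adding a sentence to that effect, and correcting the unique-continuation heuristic as above, would bring your proposal essentially in line with the paper's argument.
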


The existence of instanton perturbations satisfying (i) and (ii) is quite standard; our proof will follow \cite[Chapter 5.5]{Donfloer} closely. The novel feature here is that the perturbation can be chosen to satisfy (iii) and (iv). The proof will effectively show that the set of $H$ satisfying (i-iv) is comeager in a suitable function space. It will also show that $H$ can be taken to be smooth in the $W^{k, p}$-topology on $\A(Q)$ for all $k \geq 0$, $ p\geq 1$. Moreover, $H$ can be chosen to be arbitrarily small in the following sense: For any $\eps > 0$, $p \geq 1$, and non-negative integers $k_0, \ell_0$, there is a perturbation $H$ satisfying the conclusions of Proposition \ref{comppertprop} as well as the estimates (\ref{lemma1}) and (\ref{lemma2}) with $C_H(k, \ell) = \eps$, for all $0 \leq k \leq k_0$ and $0 \leq \ell \leq \ell_0$.

\begin{remark}\label{appremark}
Our intended application to the quilted Atiyah-Floer conjecture is as follows (see the discussion of Section \ref{AProgramForProvingTheConjecture}): Fix a metric $g$ on $Y$, and a sequence $\eps_n \rightarrow 0$ of positive numbers with $\eps_1 = 1$. Then we set $g_n \defeq g_{\eps_n}$. The existence of $H$ from Proposition \ref{comppertprop} shows that both Floer theories are well-defined for arbitrarily small $\eps = \eps_n$. Note that, in this case, the $g_n$ are not all smooth with respect to the same smooth structure on $Y$. This, however, is not an issue since compatible instanton perturbations on $Q$ are $\eps$-smooth for all $\eps > 0$. The reason we want a single perturbation $H$ for arbitrarily small $\eps$ is that this allows us to fix $H$ ahead of time, and then carry out the analysis in \cite{DunIndex} and \cite{DunComp}. This analysis then tells us how small we need to take $\eps_n$.
\end{remark}

\begin{proof}[Proof of Proposition \ref{comppertprop}]

Our proof is a combination of the arguments given in \cite[Section 7]{DS2}, \cite[Section 5.5]{Donfloer}, and \cite[Section 3]{Kron}. As usual, we drop the Sobolev exponents unless they are relevant to the discussion.

We begin by discussing the class of perturbation $H$ we are considering; we will call these \emph{$\Sigma_\bullet$-holonomy perturbations}. These, in turn, will be constructed using \emph{$I$-dependent holonomy perturbations} on the surfaces $\Sigma_i$. To describe the latter, fix a component $\Sigma_i \subset \Sigma_\bullet$, and a finite collection of embeddings

$$\gamma_{j}: \bb{R} / \bb{Z} \times \bb{D}^1 \longrightarrow P_i, \indent j \in \left\{ 1, \ldots, J\right\}$$ 
 such that each $\pi \circ \gamma_j$ is orientation-preserving; here, $\pi: P_i \rightarrow \Sigma_i$ is the projection. Assume $\smash{\gamma_{j}(0, \tau)}$ is close to a common basepoint in $P_i$ for all $j$ and all $\tau \in \bb{D}^1$. Then, for $\tau \in \bb{D}^1$, we can consider the map $\rho_\tau: \A(P_i) \rightarrow \SU(r)^{J}$ given by the based $\SU(r)$-valued holonomy around each of the $J$ loops $\theta \mapsto \gamma_{ j}(\theta, \tau)$. Fix a word $W$ in $J$ variables, and view this word as a monomial $\SU(r)^{J} \rightarrow \SU(r)$. Then this data determines a gauge invariant $\bb{R}$-valued map $\A(P_i) \times \bb{D}^1  \rightarrow  \bb{R}$ by the formula $(\afalpha, \tau)   \mapsto   \mathrm{Tr}\left(W\left( \rho_\tau(\afalpha) \right)  \right)$. Next, integrate this over $\bb{D}^1$ to obtain a smooth $\G(P_i)$-invariant map $h_{\Sigma_i}: \A(P_i)  \rightarrow  \bb{R}$ given by 

\begin{equation}\label{holpertbounds}
h_{\Sigma_i}(\afalpha   )  = \intd{\bb{D}^1} \:  \mathrm{Tr}\left(W\left( \rho_\tau(\afalpha) \right)  \right) \: \beta_0 \: d\tau,
\end{equation}
where $\beta_0: \mathrm{int}\:\bb{D}^1 \rightarrow \bb{R}^{\geq 0}$ is a smooth compactly supported bump function with total integral 1. This is a holonomy perturbation for $\Sigma_i$. To incorporate the $I$-dependence (necessary from the definition of \emph{compatible}, and for the proof below), fix another bump function $\beta_1 : I \rightarrow \bb{R}^{\geq 0}$ that vanishes at the boundary, and set
$$h_{i, t} \defeq \beta_1 (t) h_{\Sigma_i}.$$
We will refer to this as an {\bfseries $I$-dependent holonomy perturbation for $\Sigma_i$}, and we think of $h_{i, t} = h_{i , t}(\left\{ \gamma_{j} \right\}, W )$ as depending only on the $\gamma_{j}$ and the word $W$ (e.g., we think of the bump functions $\beta_0, \beta_1$ as fixed). 

Repeating the construction of the previous paragraph for each $\Sigma_i$, one can then define $H: \A(Q) \rightarrow \bb{R}$ by (\ref{deforhandshit1}). Suppose $H^1, \ldots, H^K : \A(Q) \rightarrow \bb{R}$ are constructed in this fashion, and let $\eta: \bb{R}^K \rightarrow \bb{R}$ be any smooth map. Then we can form the function
\begin{equation}\label{defofholo}
\A(Q) \longrightarrow \bb{R}, \indent \afa \longmapsto \eta \left( H^1(\afa), \ldots, H^K(\afa) \right).
\end{equation}
We will refer to any function constructed in this way as a {\bfseries $\Sigma_\bullet$-holonomy perturbation}.

The arguments of \cite[Section 3]{Kron} (applied to the linearizations of the $h_{\Sigma_i}$) show that any such $\Sigma_\bullet$-holonomy perturbation satisfies conditions (a) and (b) in Theorem \ref{kronsmagic}. In particular, any $\Sigma_\bullet$-holonomy perturbation is a compatible instanton perturbation, and hence of the form required by the proposition we are working to prove. (In fact, the perturbation relevant for the theorem will lie in a suitable Banach space of infinite linear combinations of $\Sigma_\bullet$-holonomy perturbations.)

Fix a $\Sigma_\bullet$-holonomy perturbation $H$. Since the holonomy takes values in the compact set $\SU(r)$, this function is uniformly bounded (each $h_{\Sigma_i}$ is bounded by the $r$ in $\PU(r)$). Likewise, the linearization $X(\afa) \in \Omega^2(Y, Q(\frak{g}))$ of $H$ is ${\mathcal{C}^0}(Y)$-bounded with a bound independent of $\afa$. For a proof, see the discussion leading up to Lemma 5.12 in \cite{Donfloer}; our $X(\afa)$ corresponds to Donaldson's $T_\afA$. The higher derivatives of $H$ are bounded in $W^{k, p}$-operator norm with bounds depending on $r$, the curvature of the connection $\afa$, and the number of derivatives taken; see \cite[Proposition 7]{Kron}.

Rather than working on the full space $\A^{k, p}(Q)$, we find it convenient below to work with the quotient space 
$$\smash{{\mathcal{B}} \defeq \A^{k, p}(Q) / \G^{k+1, p}_{\Sigma_\bullet}}.$$ 
In fact, by our assumptions on the bundle $Q$, most of our work will take place on the (open) subset ${\mathcal{B}}^* \subset {\mathcal{B}}$ of irreducible connections. Then ${\mathcal{B}}^*$ is a smooth Banach manifold. (Note that since we define ${\mathcal{B}}$ by quotienting only by the subgroup $\G_{\Sigma_\bullet}$, there is still a residual action of the finite group $\G_{\Sigma_\bullet}/ \G_0(Q)$ on this space. If we had quotiented by the full group $\G(Q)$, then ${\mathcal{B}}^*$ would have orbifold singularities.) Given any representative $\afa \in \left[ \afa \right]$, the tangent space $T_{\left[\afa\right]} {\mathcal{B}}^*$ can be canonically identified with
$$\Omega^1(Y, Q(\frak{g})) / \textrm{im} \: d_\afa.$$ 
The gauge invariance of any $\Sigma_\bullet$-holonomy perturbation $H$ implies that it descends to a well-defined smooth map $ {\mathcal{B}} \rightarrow \bb{R}$ on the quotient, and we denote the induced map by the same symbol $H$.

\medskip

Now that we have described the class of functions we are interested in, we are ready to prove the proposition. This will be carried out in several stages, each having the following form: Suppose $C \subset {\mathcal{B}}^*$ is compact, and assume each point has a neighborhood contained in a finite-dimensional submanifold. Fix a compact set $\underline{C} \subset T{\mathcal{B}}^* \vert_{C}$ consisting of vectors over $C$. Donaldson shows \cite[Section 5.5.1]{Donfloer} that there are a finite number of perturbations\footnote{Strictly speaking, the perturbations Donaldson uses differ from ours because they are defined by considering holonomy in all three directions in $Y$ and not just in the $\Sigma_\bullet$-directions. In Steps 1-3 below, we will discuss how to extend Donaldson's argument to our $\Sigma_\bullet$-holonomy perturbations.} $H^1, \ldots, H^K$ so that the map
\begin{equation}\label{donsembedding}
\left( H^1, \ldots, H^K\right): {\mathcal{B}}^*  \rightarrow  \bb{R}^K
\end{equation}
restricts to a $\underline{C}$-\emph{immersion} of $C$ in the sense that the linearization restricts to an injection from each fiber of $\underline{C}$ into $\bb{R}^K$. Donaldson's proof of this goes roughly as follows: Each element of ${\mathcal{B}}$ is determined up to $\G_{\Sigma_\bullet}/ \G_0(Q)$ by its holonomy. Likewise, each tangent vector at $\left[ \afa \right] \in {\mathcal{B}}$ is determined by its linearized holonomy (there is no $\G_{\Sigma_\bullet}/ \G_0(Q)$-ambiguity  at the tangent vector level since this is a discrete group). Since $\underline{C}$ is compact, only finitely many holonomies are needed to distinguish any two elements in a fiber of $\underline{C}$, which proves that (\ref{donsembedding}) is a $\underline{C}$-immersion for suitably chosen $H^1, \ldots, H^K$. We will use a variant of this construction repeatedly, first taking $C$ to be the space of flat connections, then coming from the space of $g_n$-instantons, and then from the $\underline{J}_0$-holomorphic curve representatives. It will then follow from a standard Sard-Smale argument that the relevant perturbed operators are surjective, which will verify (i) and (iv) in the statement of Proposition \ref{comppertprop}. To verify (ii) and (iii), we need to argue that, in each case, it suffices to take $H^1, \ldots, H^K$ in (\ref{donsembedding}) to be $\Sigma_\bullet$-holonomy perturbations, as opposed to the perturbations Donaldson considers. This latter part will follow from irreducibility, together with a unique continuation argument.

\medskip

\noindent \underline{Step 1}: \emph{There is some $\eps > 0$, and a positive-dimensional and finite-dimensional vector space $V$ consisting of $\Sigma_\bullet$-holonomy perturbations $H$, so that the following holds. Let $B_\eps(0) \subset V$ be the $\eps$-ball. Then for a dense, open subset of $H \in B_{\eps} (0)$, all $H$-flat connections are non-degenerate. }

\medskip

To prove this, take $C$ to be the image of the space flat connections in ${\mathcal{B}}^*$. Uhlenbeck's compactness theorem implies this is a compact set. We want to eliminate the degenerate flat connections. Note that a flat connection $\afa$ is non-degenerate if and only if $H^1_{\afa} \in T_{\left[ \afa\right]} {\mathcal{B}}^*$ is zero. We therefore consider the set $\underline{C}  \subset T{\mathcal{B}}^*$ over $C$ whose fiber over $\left[ \afa \right] \in C$ is the zero space if $\afa$ is non-degenerate, and the unit sphere in $H^1_{\afa}$ otherwise. Then $\underline{C}$ is compact as well. 

First we argue that $\Sigma_\bullet$-holonomy perturbations are sufficient for our purposes. Let $\left[ \afv \right] \in \underline{C}$ be non-zero. This can be represented by a vector $\afv \in \ker d_\afa \cap \ker d_\afa^*$ with unit norm, where $\afa$ is a degenerate flat connections. Since $\afv$ lies in the kernel of the elliptic operator $d_\afa \oplus d_\afa^*$, it satisfies a unique continuation property. In particular, it is uniquely determined by its value on $I \times \Sigma_\bullet \subset Y$. Write $\afv \vert_{I \times \Sigma_\bullet} = \afnu + \omega \: dt$ and $\afa \vert_{I \times \Sigma_\bullet}  = \afalpha + \psi \: dt$. Then since $\afv$ is in the kernel of $d_\afa$, we have
$$\nabla_t \afnu = d_\afalpha \omega.$$
Moreover, since $\afa$ is flat, it follows that $\afalpha$ is as well; in particular, $\afalpha$ is irreducible. It follows that $\omega$, and hence all of $\afv$ is uniquely determined by the value of $\afnu$ on $I \times \Sigma_\bullet$, which we view as a path from $I$ into the space of 1-forms on $\Sigma_\bullet$ with values in $\ker d_\afalpha^*$. Just as connections mod gauge are determined uniquely by their holonomy, 1-forms in the kernel of $d_\afa^*$ are distinguished by their linearized holonomy. Since $\afv$ is non-zero, the above observations imply that there is some $t \in (0, 1)$ for which $\afnu(t) \neq 0$. This means that we can find an $I$-dependent holonomy perturbation $h_{i, t}: \A(P_i) \rightarrow \bb{R}$ on some $\Sigma_i$, so that the linearization is such that $d_{\afalpha(t)} h_{i, t} (\afnu(t))$ is non-negative for all $t$, and $d_{\afalpha(t)}h_{i, t}(\afnu(t)) > 0$ for some $t$. Use this to construct a $\Sigma_\bullet$-holonomy perturbation $H: \A(Q) \rightarrow \bb{R}$ by the formula (\ref{deforhandshit1}). It follows that
$$d_{\afa}H(\afv) = \intdd{0}{1} d_{\afalpha(t)} H(\afnu(t)) \: dt > 0.$$
Repeating for each non-zero element of $\underline{C}$, and using compactness of $\underline{C}$, it follows that one can find a finite number of $\Sigma_\bullet$-holonomy perturbations $H^1, \ldots, H^K$ for which (\ref{donsembedding}) is a $\underline{C}$-embedding, as desired.  

To wrap up the proof of Step 1, take $V$ to be the span over $\bb{R}$ of the $H_k$. It now follows from a Sard-Smale argument \cite[Proposition 5.15]{Donfloer} that in any sufficiently small neighborhood of the origin in $V$, there is a comeager set of $\left( \eps_1, \ldots, \eps_K\right) \in \bb{R}^K$ such that all $H$-flat connections are non-degenerate, where we have set $H \defeq \sum_{k = 1}^{K} \: \eps_k H_k.$ That the set of such $H$ is open is a consequence of the compactness of $C$. This completes the proof of Step 1. Note that by Proposition \ref{nondegprop}, it follows that all $\underline{H}_t$-perturbed Lagrangian intersection points are non-degenerate as well.

\medskip

\noindent \underline{Step 2}: \emph{Suppose $H'$ is a $\Sigma_\bullet$-holonomy perturbation so that all $H'$-flat connections are non-degenerate. Fix $n \in \bb{N}$ and $E^\inst > 0$. Then there is some $\eps > 0$, and a positive-dimensional and finite-dimensional vector space $V$ consisting of $\Sigma_\bullet$-holonomy perturbations so that for a dense, open subset of $H'' \in B_\eps(0)$, the perturbation $H \defeq H'+ H''$ satisfies the following:}

\begin{itemize}
\item[\emph{(a)}] \emph{all $H$-flat connections are non-degenerate,}
\item[\emph{(b)}] \emph{all $(g_n, H)$-instantons with energy bounded by $E^\inst$ are regular.}
\end{itemize}

\medskip

Fix $H', n$ and $E^\inst$ as stated in Step 2. Since non-degeneracy is an open condition, by choosing $\eps$ to be sufficiently small, it follows that all $H$-flat connections are non-degenerate whenever $H = H' + H''$ and $H'' \in B_\eps(0) \subset V$. This takes care of (a). For simplicity, we may therefore assume $H' = 0$. 

To prove the rest of Step 2, follow the argument Donaldson gives, starting on p. 142 in \cite{Donfloer}. The only new point here is that we are working with the set of $\Sigma_\bullet$-holonomy perturbations, which are different than the set of perturbations considered by Donaldson. To see why this is sufficient, we extend the line of reasoning from Step 1 as follows: First, by unique continuation, it suffices to restrict attention to the end $\left[S, \infty \right) \times I \times \Sigma_\bullet$ for any $S$. Writing instantons here as $\afA = \afalpha + \afphi \: ds + \afpsi \: dt$, we may suppose that $S$ is large enough so that $\afalpha(s, t)$ is irreducible for all $(s, t) \in \left[S, \infty \right) \times I$. The game for the instanton case is to distinguish non-zero elements of $H^+_\afA$, where $\afA$ is an instanton. The elements of $H^+_\afA$ can be identified with the 2-forms $W$ on $\bb{R} \times Y$ with $*W = W$ and $d_\afA W = 0$. These 2-forms satisfy a unique continuation property, so it suffices to distinguish them on $\left[S, \infty \right) \times I \times \Sigma_\bullet$. The self-duality condition gives
$$W \vert_{\left[S, \infty \right) \times I \times \Sigma_\bullet} = \omega + ds \wedge \mu + dt \wedge *_\Sigma \mu + ds \wedge dt \: *_\Sigma \omega,$$
where $*_\Sigma$ is the Hodge star on $\Sigma_\bullet$, and $\omega$ (resp. $\mu$) is a map from $\left[ S, \infty \right) \times I$ into the space of 2-forms (resp. 1-forms) on $\Sigma_\bullet$. The condition $d_\afA W = 0$ implies, in particular, that $\omega, \mu$ are related by
$$ \nabla_s \mu + *_\Sigma \nabla_t \mu   + d_\afalpha^* \omega  = 0.$$
Since $\afalpha$ is irreducible, this shows that the values of $\mu$ on $\left[S, \infty \right) \times I \times \Sigma_\bullet$ determines $\omega$ uniquely, and hence these values determine $W$ uniquely. Just as in Step 1, we can find an $I$-dependent holonomy perturbation $h_{i, t}$ on some $\Sigma_i$ to distinguish $\mu$ from 0, provided $W \neq 0$; here we just work, as Donaldson does, at any $s_0 \geq S$. Now use this to construct a $\Sigma_\bullet$-holonomy perturbation $H^1$ that distinguishes the 1-form $\phi \defeq \iota_{\partial_s} W \vert_{s  = s_0}$ from zero. Similarly, find further $\Sigma_\bullet$-holonomy perturbations $H^2, \ldots, H^K$ such that the linearization of
$$(H^1, H^2, \ldots, H^K): {\mathcal{B}}^* \longrightarrow \bb{R}^K$$
sends $\phi$ and $\partial_s \afalpha $ to linearly independent vectors in $\bb{R}^K$. At this point, Donaldson's proof carries through without modification, starting at the bottom of p. 143, and this completes Step 2.

\medskip

\noindent \underline{Step 3}: \emph{Suppose $H'$ is a $\Sigma_\bullet$-holonomy perturbation so that all $H'$-flat connections are non-degenerate. Fix $E^\quilt > 0$. Then there is some $\eps > 0$, and a positive-dimensional and finite-dimensional vector space $V$ consisting of $\Sigma_\bullet$-holonomy perturbations so that for a dense, open subset of $H'' \in B_\eps(0)$, the perturbation $H \defeq H'+ H''$ satisfies the following:}

\begin{itemize}
\item[\emph{(a)}] \emph{all $\underline{H}_t$-Lagrangian intersection points are non-degenerate,}
\item[\emph{(b)}] \emph{all $(\underline{J}_0, \underline{H}_t)$-holomorphic strips with energy bounded by $E^\quilt$ are regular.}
\end{itemize} 
\emph{Here $\underline{H}_t$ is the split-type Hamiltonian associated to $H$. }

\medskip

Just as in the previous step, we will automatically have (a) provided $H''$ is small. The idea for obtaining the rest of the claims in Step 3 is to view each holomorphic strip in terms of representatives, i.e., connections on $\bb{R} \times Y$. This, in turn, can then be viewed as maps $\bb{R} \rightarrow {\mathcal{B}}^*$, and by unique continuation one could replace $\bb{R}$ with an interval of the form $\left[S, \infty\right)$. Here one wants to perturb so the elements of the cokernel of each linearized Cauchy-Riemann operator vanishes. Just as with the elements of $H^+_\afA$ in the previous step, any element in a cokernel can be viewed as a path $\phi$ from $\bb{R}$ into $T {\mathcal{B}}^*$; once again, unique continuation allows us to replace the domain of $\phi$ with $\left[ S, \infty \right)$, if we wish. Write $ \phi \vert_{I \times \Sigma_\bullet} = \mu + dt \wedge \eta$, where $\mu$ (resp. $\eta$) maps into the space of 1-forms (resp. 0-forms) on $\Sigma_\bullet$. Then irreducibility, together with the defining equations for the elements of the cokernel imply that the values of $\mu$ determine $\phi$ uniquely. This is analogous to what we saw with elements of $H^+_\afA$. Now the argument follows as in Step 2.

\medskip

To put this all together, let $H_0$ be as in Step 1. Now recursively define a sequence $\left\{H_k\right\}$ of $\Sigma_\bullet$-holonomy perturbations as follows: Fix sequences 
$$\left\{E_M^\inst\right\}_{M \in \bb{N}}, \indent \left\{E_N^\quilt\right\}_{N \in \bb{N}}$$ 
of real numbers increasing to $\infty$. Fix an enumeration
$$\varphi: \bb{N}_{\geq 1} \longrightarrow \left\{(g_n, E_M^\inst), E_N^\quilt \right\}_{n, M, N \in \bb{N}},$$ 
where $g_n$ are the metrics from the statement of the proposition. For each $\ell \geq 1$, fix a $\Sigma_\bullet$-holonomy perturbation $H_\ell$ as follows:

\begin{itemize}
\item If $\varphi(\ell) =  (g_n, E_N^\inst)$ for some $n, N$, then take define $H_\ell \defeq H''$, where $H''$ satisfies the conclusions of Step 2 with $H' \defeq H_{\ell-1}$ and $E^\inst \defeq E_N^\inst$. 

\item If $\varphi(\ell) =  E_M^\quilt$ for some $M$, then take define $H_\ell \defeq H''$, where $H''$ satisfies the conclusions of Step 3 with $H' \defeq H_{\ell-1}$ and $E^\quilt \defeq E_M^\quilt$.
\end{itemize}
At each stage, refine the choice of $H_\ell$ as follows: Given a $\Sigma_\bullet$-holonomy perturbation $H$ and non-negative integers $k_0 , \ell_0$, define
$$\Vert H \Vert_{k_0, \ell_0} \defeq \sumd{k' \leq k_0, \ell' \leq \ell_0} C_H(k', \ell')$$ 
where $C_H(k,\ell)$ is the minimum constant satisfying the conclusion of Lemma \ref{smallnesslemma}, below. By scaling $H$ by a small constant, the value $ \Vert H \Vert_{k_0, \ell_0}$ can be made as small as we wish. Then we refine the choice of $H_\ell$ by demanding that 
$$\Vert H_\ell \Vert_{\ell, \ell} \leq \mathrm{min}\: \left\{2^{-\ell}, \eps_\ell \right\},$$ 
where $\eps_\ell$ is one third of the distance from $H_{\ell-1}$ to the nearest instanton perturbation not satisfying the conditions (a) or (b) from the appropriate Step. This refinement is possible at each stage because of two reasons: (i) in each of Steps 2 and 3, each $\Vert \cdot \Vert_{k_0, \ell_0}$ restricts to a well-defined norm on the vector space $V$, and (ii) in each Step, the set of $H''$ satisfying (a) and (b) is open. 

Finally, consider the series $H = \sum_{\ell\in \bb{N}} \: H_\ell$. Then for each $k, p$, this series converges in $\C^\infty$ on bounded subsets of the space of maps $\A^{k,p}(Q) \rightarrow \bb{R}$. Moreover, $H$ satisfies the conclusions of the proposition.
\end{proof}

\begin{lemma}\label{smallnesslemma}
Let $H$ be any $\Sigma_\bullet$-holonomy perturbation as in (\ref{defofholo}). Fix a non-negative integer $\ell$. Then there is a constant $C_H(0, \ell)$ so that
\begin{equation}\label{lemma1}
\begin{array}{lcl}
\Vert d^\ell X_\afa (v_1, v_2, \ldots, v_\ell) \Vert_{L^{p}(Y)} \\
\indent \indent \leq C_H(0, \ell) \Vert v_1 \Vert_{L^{p}(Y)} \Vert v_2 \Vert_{L^{p}(Y)}\ldots \Vert v_\ell \Vert_{L^{p}(Y)},
\end{array}
\end{equation}
for all connections $\afa \in \A(Q)$, 1-forms $v_1, \ldots, v_\ell \in \Omega^1(Y, Q(\frak{g}))$, and $1 \leq p \leq \infty$. Here $d^\ell X_\afa: \Omega^1 \times \Omega^1 \times \ldots \times \Omega^1 \rightarrow \Omega^2$ is the $\ell$th derivative of $\afa \mapsto X(\afa)$ with respect to, say, the $W^{1,2}$-topology. When $\ell = 0$, this says that $X(\afa)$ is uniformly bounded in $L^p$.

More generally, if $k$ is a positive integer, then there is a constant $C_H(k, \ell)$ so that
\begin{equation}\label{lemma2}
\begin{array}{lcl}
\Vert d^\ell X_\afa (v_1, v_2, \ldots, v_\ell) \Vert_{W^{k,p}(Y)} \\
\indent \indent \leq C_H(k, \ell) \left(1 + \Vert F_\afa \Vert_{W^{k-1,p}(Y)}^{k-1}  \right) \Vert v_1 \Vert_{W^{k,p}(Y)} \Vert v_2 \Vert_{W^{k,p}(Y)} \ldots \Vert v_\ell \Vert_{W^{k,p}(Y)}.
\end{array}
\end{equation}
\end{lemma}

\begin{proof}
The $L^p$-estimates were proved in \cite[Lemma 8]{Kron}, and reduce to the fact that the holonomy maps into the compact set $\SU(r)$. In fact, Kronheimer shows that for each positive integer $\ell$, the map $H$ is bounded in $\C^{\ell}$ as a map $\A(Q) \rightarrow \bb{R}$ with the $L^p$-topology on $A(Q)$. It therefore suffices to verify the $W^{k,p}$-estimates for $k \geq 1$. We will focus on the $W^{1,p}$-estimate for $ X(\afa)$; the ones for the derivatives $d^\ell X_\afa$ are similar. 

Recall that $H$ is constructed from finitely many functions $h = h_{\Sigma_i}: \A(P_i) \rightarrow \bb{R}$ of the form (\ref{holpertbounds}). To establish the desired bound for $X_\afa$, it suffices to show there is some constant $C$ so that 
$$\Vert x(\afalpha) \Vert_{W^{1,p}(\Sigma_i)}  \leq   C(1 + \Vert F_\afalpha \Vert_{L^p(\Sigma_i)} )$$
for all connections $\afalpha \in \A(P_i)$; here $x: \A(P_i) \rightarrow \Omega^1(\Sigma_i, P_i(\frak{g}))$ is the Hamiltonian vector field for $h$. Throughout we will liberally refer to the notation $\gamma_j$, etc., leading up to (\ref{holpertbounds}). 

For simplicity, we assume $J = 1$ and so the word $W: \SU(r) \rightarrow \SU(r)$ is just the identity map. This gives
$$h(\afalpha) = \intd{\bb{D}^1} \: \mathrm{Tr}(\rho_\tau (\afalpha)) \: \beta_0 \: d \tau .$$
This implies that $x(\afalpha)$ a 1-form supported in the image of the thickened loop $\pi \circ \gamma_1: \bb{R} / \bb{Z} \times \bb{D}^1 \rightarrow \Sigma_i$. By trivializing the bundle over this image and pulling back, we find it convenient to think of $x(\afalpha)$ as a $\frak{g}$-valued 1-form on $\bb{R} / \bb{Z} \times \bb{D}^1$, in which case we have 
\begin{equation}\label{xproj}
x(\afalpha) = \proj \: \rho_\tau (\afalpha) \: \beta_0 \: d\tau,
\end{equation}
where $\proj$ is the orthogonal projection from the Lie group $\SU(r)$ to its Lie algebra, as in \cite[Section 3(i)]{Kron} and \cite[p. 135]{Donfloer}. This is covariantly constant in the $\bb{R} / \bb{Z}$-direction since this is the direction where the holonomy occurs. It follows that to estimate the $W^{1,p}$-norm of $x(\afalpha)$ we need only estimate the $L^p$-norm of
$$\partial_\tau x(\afalpha),$$
where $\tau$ is the variable in $\bb{D}^1$, as above. 

By (\ref{xproj}), it suffices to bound the $L^p$ norm of $\partial_\tau \rho_\tau (\afalpha)$. Let $\afalpha(\tau)$ be the restriction of $\afalpha$ to $\bb{R} / \bb{Z} \times \left\{ \tau \right\}$, which we can view as a $\tau$-dependent connection on $\bb{R} / \bb{Z} \times \left\{0 \right\}$. Then, up to an overall constant depending only on the derivative $\partial_\tau \gamma_1$, the norm of $\partial_\tau \rho_\tau (\afalpha) \in \frak{g}$ is controlled by the norm of $\partial_\tau \mathrm{hol}(\afalpha(\tau)) \in \frak{g}$, where $\mathrm{hol}$ is the holonomy around $\bb{R} / \bb{Z} \times \left\{0 \right\}$. Letting $d \mathrm{hol}_\afalpha$ denote the linearization at $\afalpha$ of the holonomy map, we have
$$\begin{array}{rcl}
\partial_\tau \mathrm{hol}(\afalpha(\tau)) & = & d \mathrm{hol}_{\afalpha(\tau)} (\partial_\tau \afalpha(\tau)).
\end{array}$$
Now $\partial_\tau \afalpha$ is, up to gauge, the $d\tau$-component of $F_\afalpha$. Combining this with the uniform $L^p$ bounds from \cite[Lemma 8]{Kron}, we have
$$\vert \partial_\tau \mathrm{hol}(\afalpha(\tau))  \vert \leq  C \Vert F_\afalpha \Vert_{L^p(\bb{R} / \bb{Z} \times \left\{\tau \right\})}.$$
Raising both sides to the power of $p$, and integrating $\tau$ over $\bb{D}^1$ gives
$$\Vert \partial_\tau x(\afalpha) \Vert_{L^p(\Sigma_i)}  = \Vert \partial_\tau x(\afalpha) \Vert_{L^p(\bb{R} / \bb{Z} \times \bb{D}^1)} \leq C \Vert F_\afalpha \Vert_{L^p(\bb{R} / \bb{Z} \times \bb{D}^1)},$$
as desired.

\end{proof}

\small

\end{document}